\theoremstyle{definition}
\newtheorem{theorem}{Theorem}[section]
\newtheorem{proposition}[theorem]{Proposition}
\newtheorem{lemma}[theorem]{Lemma}
\newtheorem{definition}[theorem]{Definition}
\newtheorem{remark}[theorem]{Remark}
\newtheorem{problem}[theorem]{Problem}
\newtheorem{example}[theorem]{Example}
\newtheorem{conjecture}[theorem]{Conjecture}
\newcommand{\R}{\mathbb{R}}   
\newcommand{\N}{\mathbb{N}}   
\newcommand{\im}{\text{\normalfont Im}}    
\newcommand{\re}{\text{\normalfont Re}}    
\renewcommand{\epsilon}{\varepsilon}    
\newcommand{\DD}{{\rm D}}           
\newcommand{\supp}{{\rm supp}}   
\newcommand{\ii}{{\rm i}}                
\begin{document}

\title{Large time unimodality for classical and free Brownian motions with initial distributions}
\author{Takahiro Hasebe and Yuki Ueda}
\maketitle

\begin{abstract}
We prove that classical and free Brownian motions with initial distributions are unimodal for sufficiently large time, under some assumption on the initial distributions. The assumption is almost optimal in some sense. Similar results are shown for a symmetric stable process with index 1 and a positive stable process with index $1/2$.  We also prove that free Brownian motion with initial symmetric unimodal distribution is unimodal, and discuss strong unimodality for free convolution.   
\end{abstract}

\noindent
{\it Keywords}: unimodal, strongly unimodal, freely strongly unimodal, Biane's density formula, classical Brownian motion, free Brownian motion, Cauchy process, positive stable process with index $1/2$,

\tableofcontents


\section{Introduction}

A Borel measure $\mu$ on $\mathbb{R}$ is {\it unimodal} if there exist $a\in\mathbb{R}$ and a function $f\colon\mathbb{R}\rightarrow [0,\infty)$ which is non-decreasing on $(-\infty,a)$ and non-increasing on $(a,\infty)$, such that
\begin{equation}
\mu(dx)=\mu(\{a\})\delta_a+f(x)\,dx. 
\end{equation}  
The most outstanding result on unimodality is Yamazato's theorem \cite{Yamazato} that all classical selfdecomposable distributions are unimodal. After this result, in \cite{HT2}, Hasebe and Thorbj\o rnsen proved the free analog of Yamazato's result: all freely selfdecomposable distributions are unimodal. The unimodality has several other similarity points between classical and free probability theories. However, it is not true that the unimodality shows complete similarity between classical and free probability theories. For example, classical compound Poisson processes are likely to be non-unimodal in large time \cite{Wol78}, while free L\'{e}vy processes with compact support become unimodal in large time \cite{HS}. In this paper, we mainly focus on the unimodality of classical and free Brownian motions with initial distributions and consider whether classical and free versions share similarity points or not. In free probability theory, the semicircle distribution is the free analog of the normal distribution. In random matrix theory, it appears as the limit of eigenvalue distributions of Wigner matrices as the size of random matrices gets close to infinity. Free Brownian motion is defined as a free L\'{e}vy process started at $0$ and distributed with centered semicircle distribution $S(0, t)$ at time $t>0$. Furthermore, we can provide free Brownian motion with initial distribution $\mu$ which is a free L\'{e}vy process distributed with $\mu$ at $t=0$ and with $\mu\boxplus S(0,t)$ at time $t>0$. The definition of $\boxplus$, called free convolution, is in Section \ref{subsec:free convolution}. The additive free convolution is the distribution of the sum of two random variables which are freely independent (the definition of free independence is in \cite{NS}). In \cite{Bia}, Biane gave the density function formula of free Brownian motion with initial distribution (see Section \ref{sec Biane}). In our studies, we first consider the symmetric Bernoulli distribution $\mu:=\frac{1}{2}\delta_{+1}+\frac{1}{2}\delta_{-1}$ as an initial distribution and compute the density function of $\mu\boxplus S(0,t)$. Then we see that the probability distribution $\mu\boxplus S(0,t)$ is unimodal for $t\ge4$ (and it is not unimodal for $0<t<4$). This computation leads to a natural problem:

\begin{problem}
For what class of probability measures on $\mathbb{R}$, free Brownian motion with such an initial distribution becomes unimodal for sufficiently large time?
\end{problem}
We then answer to this problem as follows:

\begin{theorem}\label{thm:main1}
\begin{enumerate}[\rm(1)] 
\item\label{main1-1} Let  $\mu$ be a compactly supported probability measure on $\mathbb{R}$ and $D_\mu:=\sup\{|x-y| : x,y\in \text{ supp}(\mu)\}$. Then $\mu\boxplus S(0,t)$ is unimodal for $t\ge 4D_\mu^2$.

\item Let $f\colon\mathbb{R}\rightarrow [0,\infty)$ be a Borel measurable function. Then there exists a probability measure $\mu$ on $\mathbb{R}$ such that $\mu\boxplus S(0,t)$ is not unimodal for any $t>0$ and
\begin{equation}
\int_\mathbb{R}f(x)\,d\mu(x)<\infty.
\end{equation}
Note that such a measure $\mu$ is not compactly supported from \eqref{main1-1}. 
\end{enumerate}
\end{theorem}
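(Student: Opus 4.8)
The plan is to derive both parts from Biane's density formula for $\mu\boxplus S(0,t)$ (Section~\ref{sec Biane}). Recall its shape: set $R_\mu(u):=\int_\R (u-x)^{-2}\,d\mu(x)\in(0,\infty]$ and $U_t:=\{u\in\R: R_\mu(u)>1/t\}$; on $U_t$ let $v_t(u)>0$ be the unique solution of $\int_\R \frac{d\mu(x)}{(u-x)^2+v_t(u)^2}=\frac1t$, and set $v_t:=0$ off $\overline{U_t}$. Biane's theorem provides an increasing homeomorphism $\psi_t$ of $\R$ such that $\mu\boxplus S(0,t)$ is absolutely continuous with continuous density $p_t$ satisfying $p_t(\psi_t(u))=\frac{v_t(u)}{\pi t}$. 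Since $\psi_t$ is strictly increasing and $S(0,t)$ is atomless (so $\mu\boxplus S(0,t)$ has no atoms), $p_t$ is unimodal on $\R$ if and only if $u\mapsto v_t(u)$ is single-humped: positive on $U_t$, vanishing on $\partial U_t$, and rising to a single maximum. This is the object I would study in both parts.

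For part~(1), after translating so that $\supp\mu\subseteq[-D_\mu/2,D_\mu/2]$ (both unimodality and $\boxplus S(0,t)$ are translation covariant), I would first check that $U_t$ is a single interval. By Jensen's inequality applied to the convex map $y\mapsto y^{-2}$, for $u$ in the convex hull of $\supp\mu$ one has $R_\mu(u)\ge(\int|u-x|\,d\mu)^{-2}\ge D_\mu^{-2}>1/t$ once $t>D_\mu^2$, so the hull lies in $U_t$; outside the hull $R_\mu$ is strictly monotone, so its superlevel set on each side is an interval abutting the hull. Hence $U_t=(\alpha,\beta)$ is connected, with $v_t>0$ on it and $v_t(\alpha^+)=v_t(\beta^-)=0$. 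It then suffices to show $w:=v_t^2$ has a unique critical point on $U_t$. Implicit differentiation of the defining relation gives $w'=-2J_2/I_2$, where $I_j:=\int((u-x)^2+w)^{-j}\,d\mu$ and $J_2:=\int(u-x)((u-x)^2+w)^{-2}\,d\mu$, so a critical point forces $J_2=0$, i.e. $u$ equals a $\mu$-weighted mean of $x$ and therefore lies in the hull; consequently $|u-x|\le D_\mu$ there. At such a point the second derivative simplifies to $\operatorname{sign}w''=\operatorname{sign}(6I_2-8wI_3)$, and from $((u-x)^2+w)\le D_\mu^2+w$ one gets the a priori bound $w\ge t-D_\mu^2$. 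For $t\ge 4D_\mu^2$ this yields $w\ge 3D_\mu^2\ge 3(u-x)^2$, whence $\frac{w}{(u-x)^2+w}\ge\frac34$ pointwise and thus $wI_3\ge\frac34 I_2$, i.e. $6I_2-8wI_3\le0$. So $w''\le0$ at every critical point, forcing a single maximum; $v_t$, hence $p_t$, is unimodal.

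The heart of part~(1) — and the step I expect to be most delicate — is exactly this second-derivative estimate at critical points: the two ingredients $|u-x|\le D_\mu$ (from $J_2=0$) and $w\ge t-D_\mu^2$ combine to give precisely the threshold $t\ge 4D_\mu^2$, and one must still treat the non-strict boundary case $t=4D_\mu^2$, together with the degenerate two-point configurations where the pointwise bound is tight, by a continuity/limiting argument in order to exclude a second mode. The connectedness of $U_t$ and the reduction through $\psi_t$ are comparatively routine.

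For part~(2), I would take $\mu$ purely atomic, $\mu=\sum_{n\ge0}c_n\delta_{x_n}$ with $x_0=0$ and a fixed sequence $x_n\uparrow\infty$ whose gaps $g_n:=x_{n+1}-x_n$ tend to $\infty$ (e.g. $x_n=n^2$). Non-unimodality is then forced for every $t>0$ by a pure spacing argument: each atom satisfies $R_\mu(x_n)=\infty>1/t$, so $x_n\in U_t$ and $p_t$ is positive near $\psi_t(x_n)$; on the other hand, for the midpoint $m_n:=(x_n+x_{n+1})/2$ every distance $|m_n-x_k|$ is at least $g_n/2$, so $R_\mu(m_n)\le 4/g_n^2$, and choosing $n$ with $g_n^2>4t$ gives $m_n\notin U_t$, i.e. $p_t(\psi_t(m_n))=0$. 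Thus $p_t$ is positive–zero–positive along the increasing points $\psi_t(x_n)<\psi_t(m_n)<\psi_t(x_{n+1})$, which is incompatible with unimodality, for every $t$. Finally, since $f$ is finite at each point, the masses can be chosen small enough to absorb it: take $c_n=\kappa\,2^{-n}/\max(1,f(x_n))$ for $n\ge1$ and $c_0=1-\sum_{n\ge1}c_n$ (with $\kappa$ small so $c_0>0$), giving $\int_\R f\,d\mu\le c_0 f(0)+\kappa\sum_{n\ge1}2^{-n}<\infty$ while leaving the spacing argument untouched. Part~(1) guarantees that such a $\mu$ is necessarily non-compactly supported. This construction is short; the only point to state carefully is that positive density near two atoms with an exact zero in between already violates unimodality.
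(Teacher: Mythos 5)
Your part (2) is essentially the paper's own proof (Proposition \ref{notunimodal1}): the same purely atomic measure whose gaps tend to infinity, the same two observations that every atom lies in $U_t$ (so the density is positive at $\psi_t(x_n)$) while midpoints of large gaps satisfy $\int (m_n-x)^{-2}\,d\mu(x)\le 4/g_n^2<1/t$ and hence leave $U_t$, and the same tuning of the weights by $\max\{1,f(x_n)\}$ to get $\int f\,d\mu<\infty$. The only cosmetic difference is that you exhibit one positive--zero--positive configuration for each $t$, whereas the paper records that $\supp(\mu_t)=\psi_t(\overline{U_t})$ has infinitely many connected components; both correctly contradict unimodality.

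Part (1) is where you genuinely diverge from the paper. The paper first proves a level-crossing lemma (Lemma \ref{lem unimodal}): $\mu\boxplus S(0,t)$ is unimodal provided that for every fixed $R>0$ the equation $\xi_R(u):=\int_\R ((x-u)^2+R^2)^{-1}\,d\mu(x)=1/t$ has at most two solutions $u$. It then checks this by slicing in $R$: for $0<R<\sqrt{3}D_\mu$ one has $(x-u)^2+R^2<D_\mu^2+3D_\mu^2=4D_\mu^2\le t$ on the open hull, so $\xi_R>1/t$ there and all crossings lie in the two outer regions where $\xi_R$ is strictly monotone; for $R\ge \sqrt{3}D_\mu$ one has $\xi_R''<0$ on the hull, again giving at most two crossings. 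You instead fix $t$, differentiate the implicit function $w=v_t^2$, and run a critical-point analysis ($w'=-2J_2/I_2$, sign of $w''$ at critical points equal to the sign of $6I_2-8wI_3$). Both arguments turn on exactly the same numerology, $4D_\mu^2=D_\mu^2+3D_\mu^2$; yours additionally yields that the hull is contained in $U_t$, that $U_t$ is an interval (the paper never needs this, since Lemma \ref{lem unimodal} absorbs it), and a localization of the mode, while the paper's fixed-$R$ slicing avoids implicit differentiation and handles the endpoint $t=4D_\mu^2$ with no case analysis at all.

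That endpoint is the one real soft spot in your write-up: at critical points you only get $w''\le 0$, and a non-strict inequality does not by itself force a single maximum; you flag this but defer to an unspecified ``continuity/limiting argument.'' The gap closes concretely in either of two ways. First, for $t>4D_\mu^2$ your own bounds give strictness pointwise, since $w\ge t-D_\mu^2>3D_\mu^2$ and $u$ in the hull yield $6(u-x)^2-2w\le 6D_\mu^2-2w<0$ on $\supp(\mu)$, hence $w''<0$ at every critical point and the maximum is unique; the remaining case $t=4D_\mu^2$ then follows because the set of unimodal laws is weakly closed (a fact the paper itself invokes in the proof of Theorem \ref{thm:NoSU}) and $\mu\boxplus S(0,t)$ converges weakly to $\mu\boxplus S(0,4D_\mu^2)$ as $t\downarrow 4D_\mu^2$. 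Second, alternatively, equality in your estimate would require $(u-x)^2=w/3\ge D_\mu^2$ for $\mu$-a.e.\ $x$, while $u$ in the hull forces $(u-x)^2\le D_\mu^2$; hence $\supp(\mu)\subseteq\{u-D_\mu,\,u+D_\mu\}$, a set of diameter $2D_\mu$ unless $\mu$ is a point mass, contradicting $D_\mu>0$ (and the case $D_\mu=0$ is trivial). With either patch your argument is complete and constitutes a valid alternative to the paper's proof.
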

The function $f$ can grow very fast such as $e^{x^2}$, and so, a tail decay of the initial distribution does not imply the large time unimodality.  These results are formulated as Theorem \ref{unimodal1} and Proposition \ref{notunimodal1} in Section \ref{sec FBM}, respectively. 

A natural classical problem arises, that is, for what class of initial distributions on $\mathbb{R}$ Brownian motion becomes unimodal for sufficiently large time $t>0$? 
We prove in this direction the following results (formulated as Theorem \ref{c-unimodal} and Proposition \ref{c-notunimodal} in Section \ref{sec:CBM}, respectively):

\begin{theorem}\label{intro2}
\begin{enumerate}[\rm(1)]
\item Let $\mu$ be a probability measure on $\mathbb{R}$ such that 
\begin{equation}\label{eq:tail}
\alpha:=\int_{\mathbb{R}} e^{\epsilon x^2}\,d\mu(x)<\infty
\end{equation}
for some $\epsilon>0$.  Then the distribution $\mu* N(0,t)$ is unimodal for all $t\ge \frac{36\log(2\alpha)}{\epsilon}$.

\item There exists a probability measure $\mu$ on $\mathbb{R}$ satisfying that
\begin{equation}
\int_{\mathbb{R}} e^{A |x|^p} \,d\mu(x)<\infty \hspace{2mm} \text{ for all $A>0$ and $0< p<2$, and} 
\end{equation}
\begin{equation}
\int_\mathbb{R} e^{Ax^2}d\mu(x)=\infty, \hspace{2mm} \text{ for all } A>0,
\end{equation}
such that $\mu*N(0,t)$ is not unimodal for any $t>0$.
\end{enumerate}
\end{theorem}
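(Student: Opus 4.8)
The plan for part (1) is to locate the sign changes of the derivative of the density. Writing $\phi_t(u)=(2\pi t)^{-1/2}e^{-u^2/(2t)}$ and $p_t(x)=\int_{\mathbb{R}}\phi_t(x-y)\,d\mu(y)$ for the density of $\mu*N(0,t)$, which is smooth and strictly positive, I introduce for each $x$ the tilted probability measure $\nu_x(dy)=\phi_t(x-y)\,d\mu(y)/p_t(x)$ together with its mean $m(x):=\int_{\mathbb{R}}y\,d\nu_x(y)$. Differentiating under the integral sign gives $p_t'(x)=\tfrac1t\,p_t(x)\,(m(x)-x)$, so that $p_t'$ has the sign of $m(x)-x$ and $p_t$ is unimodal once $x\mapsto x-m(x)$ crosses zero exactly once from below. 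Since $\nu_x$ is the exponential tilt $\nu_x(dy)\propto e^{xy/t}e^{-y^2/(2t)}\,d\mu(y)$, the standard exponential-family identity yields $m'(x)=\tfrac1t\,\mathrm{Var}_{\nu_x}(Y)$, where $Y$ is the coordinate variable, whence $\tfrac{d}{dx}\bigl(x-m(x)\bigr)=1-\tfrac1t\,\mathrm{Var}_{\nu_x}(Y)$ and $(\log p_t)''(x)=t^{-2}\bigl(\mathrm{Var}_{\nu_x}(Y)-t\bigr)$. Because the tails of $\mu$ are Gaussian one has $m(x)-x\to\mp\infty$ as $x\to\pm\infty$, so the whole statement reduces to the single uniform estimate $\mathrm{Var}_{\nu_x}(Y)<t$ for all $x\in\mathbb{R}$, which forces $x\mapsto x-m(x)$ to be strictly increasing (and in fact makes $p_t$ log-concave, a stronger conclusion than needed).

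To establish $\mathrm{Var}_{\nu_x}(Y)<t$ I would pass to the symmetric form
\begin{equation*}
\mathrm{Var}_{\nu_x}(Y)=\frac{\iint (y-y')^2\,\phi_t(x-y)\phi_t(x-y')\,d\mu(y)\,d\mu(y')}{2\left(\int \phi_t(x-y)\,d\mu(y)\right)^2},
\end{equation*}
so that the goal is to show that the weighted average of $(y-y')^2$ against the kernel $\phi_t(x-y)\phi_t(x-y')$ does not exceed $2t$. Completing the square rewrites this kernel, up to constants, as $\exp\!\bigl(-(x-\tfrac{y+y'}{2})^2/t\bigr)\exp\!\bigl(-(y-y')^2/(4t)\bigr)$, and the second factor is precisely what damps the large values of $(y-y')^2$ being averaged. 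Splitting pairs according to $|y-y'|\le L$ or $|y-y'|>L$, the near pairs contribute at most $L^2$, while on the far pairs I replace the hypothesis by its Markov consequence $\mu(\{|y|\ge r\})\le \alpha e^{-\epsilon r^2}$ and bound the numerator against the denominator. The role of the Gaussian tail is to forbid $\mu$ from splitting its mass into two distant clusters separated by an empty gap, which is the only configuration able to keep the variance comparable to the squared diameter, as the rescaled symmetric Bernoulli law $\tfrac12\delta_{-R}+\tfrac12\delta_{R}$ (for which $\mathrm{Var}_{\nu_0}(Y)=R^2$) illustrates. Optimizing the cut-off $L\sim\sqrt t$ against the tail bound is what produces the explicit threshold $t\ge 36\,\epsilon^{-1}\log(2\alpha)$. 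The main obstacle here is the lower bound on the denominator at points $x$ sitting in a gap of $\mathrm{supp}\,\mu$: there the total mass is exponentially small, so the far-pair estimate cannot be made absolute and the tail bound must instead be used relatively, controlling the distant mass by the nearby mass.

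For part (2) the plan is to exhibit a purely atomic measure $\mu=\sum_{n\ge1}c_n\delta_{b_n}$ (symmetrized if one wants an even example) whose atoms run off to infinity along a rapidly increasing sequence $b_n$ with weights $c_n\asymp\exp\!\bigl(-b_n^2/\log b_n\bigr)$, normalized to total mass $1$. This intermediate decay is chosen exactly so that $b_n^2/\log b_n$ dominates every $A\,b_n^p$ with $p<2$ yet is dominated by every $A\,b_n^2$, which gives $\int e^{A|x|^p}\,d\mu<\infty$ for all $A>0,\ 0<p<2$, and $\int e^{Ax^2}\,d\mu=\infty$ for all $A>0$. The mechanism for permanent non-unimodality is that, for each fixed $t$, the Gaussian tail of the bulk at a far atom decays like $e^{-b_n^2/(2t)}$, which is smaller than the atom's own weight $e^{-b_n^2/\log b_n}$ as soon as $\log b_n>2t$; hence for every such $n$ the sharp spike $c_n\phi_t(\,\cdot-b_n)$ pokes above the slowly varying contribution of all the other atoms and produces a strict local maximum of $p_t$ out in the tail. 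A unimodal density is non-increasing beyond its mode and so cannot possess a local maximum away from it, whence the existence of even one such spike makes $p_t$ non-unimodal; since for every $t>0$ there are infinitely many indices with $\log b_n>2t$, the density $\mu*N(0,t)$ fails to be unimodal for every $t>0$. The delicate point is to verify quantitatively that the spike at $b_n$ is a genuine local maximum once $\log b_n>2t$, i.e. that $p_t'$ actually changes sign there, which amounts to checking that the curvature of the single spike dominates the aggregate curvature of all remaining atoms at $b_n$; choosing $b_n$ to grow fast enough (for instance $b_n=e^{n}$, so that the gaps $b_n-b_{n-1}$ outpace any fixed $\sqrt t$) makes this competition decisively favor the local spike.
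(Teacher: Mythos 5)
Your part (1) reduces the theorem to a false statement. You claim it suffices to prove the uniform estimate $\mathrm{Var}_{\nu_x}(Y)<t$ for all $x\in\R$ (equivalently, that $\mu*N(0,t)$ is log-concave), but no threshold of the form $t\ge C(\epsilon,\alpha)$ can make this true. Take $\mu_R=(1-e^{-\epsilon R^2})\,\delta_0+e^{-\epsilon R^2}\,\delta_R$. Then $\alpha=\int_\R e^{\epsilon y^2}\,d\mu_R(y)=2-e^{-\epsilon R^2}<2$ for every $R$, so the hypothesis \eqref{eq:tail} holds with a threshold at most $36\log 4/\epsilon$, uniformly in $R$. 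However, the ratio of the two tilted weights, $\frac{e^{-\epsilon R^2}}{1-e^{-\epsilon R^2}}\,e^{(2xR-R^2)/(2t)}$, runs continuously from $0$ to $\infty$ as $x$ runs over $\R$, so there is a balance point $x_R$ (located near $\frac{R}{2}+\epsilon t R$, i.e.\ \emph{beyond} the support) at which $\nu_{x_R}=\frac12\delta_0+\frac12\delta_R$, hence $\mathrm{Var}_{\nu_{x_R}}(Y)=R^2/4$. Choosing $R>2\sqrt{t}$ destroys the uniform estimate and indeed gives $(\log p_t)''(x_R)=t^{-2}\bigl(R^2/4-t\bigr)>0$: these measures are never log-concave once $R$ is large, even though they are unimodal, as the theorem asserts. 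So log-concavity is not merely ``stronger than needed''; it fails under the hypotheses, and your near/far pair-splitting scheme must break down at exactly such points. Your closing caveat about gaps in $\supp(\mu)$ does not capture this failure: $x_R$ lies outside the convex hull of the support, and no relative control of distant mass by nearby mass removes it. What survives of your framework are the correct identities $p_t'(x)=\frac1t\,p_t(x)\,(m(x)-x)$ and $m'(x)=\frac1t\,\mathrm{Var}_{\nu_x}(Y)$; but one must then settle for a location-dependent statement (for instance, $\mathrm{Var}_{\nu_x}(Y)<t$ only at points where $x=m(x)$, which already forces every zero of $x-m(x)$ to be a crossing from below), or argue as the paper does: deduce from the Markov bound $\mu(|y|>x)\le\alpha e^{-\epsilon x^2}$ the sign conditions $f_t'<0$ on $[\sqrt{t}/2,\infty)$ and $f_t'>0$ on $(-\infty,-\sqrt{t}/2]$, and concavity of the density itself, $f_t''<0$, on $(-\sqrt{t}/2,\sqrt{t}/2)$; together these force a single mode in $[-\sqrt{t}/2,\sqrt{t}/2]$.

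Your part (2), by contrast, is essentially the paper's own proof: the paper places atoms at $a_k=a^k$ with weights $w_k\propto e^{-\delta b_k^2/k}$, where $b_k\asymp a^k$ is the gap size, which is precisely your $\exp(-b_n^2/\log b_n)$ scale, and it verifies non-unimodality by the quantitative step you defer, namely showing $f_t'(a_k-1)>0$ for all large $k$ via $w_k e^{-1/(2t)}>b_k e^{-b_k^2/(2t)}$ (your condition $\log b_n\gtrsim t$). That computation does go through with your choices, so this half is sound. One small logical repair: a single strict local maximum in the tail does not by itself contradict unimodality, since the mode could sit there; what you need, and what both constructions deliver, is that for each $t$ there are infinitely many such spikes (equivalently, points arbitrarily far to the right where $p_t'>0$).
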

Thus, in the classical case, the tail decay \eqref{eq:tail} is almost necessary and sufficient to guarantee the large time unimodality. 
 From these results, we conclude that the classical and free versions of problems on the unimodality for Brownian motions with initial distributions share similarity as both become unimodal in large time, under each assumption.

Further related results in this paper are as follows. In Section \ref{sec Sym} we prove that $\mu \boxplus S(0,t)$ is always unimodal whenever $\mu$ is symmetric around $0$ and unimodal (see Theorem \ref{unimodal2}). In Section \ref{sec SU}, we define freely strongly unimodal probability measures as a natural free analogue of strongly unimodal probability measures.  We conclude that the semicircle distributions are not freely strongly unimodal (Lemma \ref{NSU}). In more general, we have the following result (restated as Theorem \ref{thm:NoSU} later):

\begin{theorem}
Let $\lambda$ be a probability measure with finite variance, not being a delta measure. Then $\lambda$ is not freely strongly unimodal.
\end{theorem}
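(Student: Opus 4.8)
The plan is to argue by contradiction, propagating the obstruction already established for the semicircle law (Lemma \ref{NSU}) back to an arbitrary finite-variance measure by means of the free central limit theorem. Suppose, for contradiction, that $\lambda$ is freely strongly unimodal, has finite variance, and is not a delta measure; then its variance $\sigma^2$ is strictly positive. The first step is to record two stability properties of the class of freely strongly unimodal measures. \emph{Affine invariance} is immediate from the definition: translations and positive dilations send unimodal measures to unimodal measures, and they intertwine with $\boxplus$ in the obvious way, so $\lambda$ freely strongly unimodal implies that every affine image of $\lambda$ is as well. \emph{Closure under $\boxplus$} follows from commutativity and associativity of free convolution: if $\lambda_1,\lambda_2$ are freely strongly unimodal and $\nu$ is unimodal, then $(\lambda_1\boxplus\lambda_2)\boxplus\nu=\lambda_1\boxplus(\lambda_2\boxplus\nu)$, where $\lambda_2\boxplus\nu$ is unimodal (since $\lambda_2$ is freely strongly unimodal), and hence the whole expression is unimodal (since $\lambda_1$ is freely strongly unimodal). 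In particular $\lambda^{\boxplus n}$ is freely strongly unimodal for every $n$.

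Granting these, let $m$ be the mean of $\lambda$, let $\lambda_0$ be the pushforward of $\lambda$ under $x\mapsto x-m$, and let $\mu_n$ denote the law obtained by scaling $\lambda_0^{\boxplus n}$ by the factor $1/\sqrt{n}$. By affine invariance and closure under $\boxplus$, each $\mu_n$ is again freely strongly unimodal, while by the free central limit theorem for finite-variance laws, $\mu_n$ converges weakly to the centered semicircle distribution $S(0,\sigma^2)$. I would then fix an arbitrary unimodal measure $\nu$ and consider $\mu_n\boxplus\nu$: this is unimodal for every $n$, because each $\mu_n$ is freely strongly unimodal. Invoking the weak continuity of free convolution with one argument held fixed, $\mu_n\boxplus\nu$ converges weakly to $S(0,\sigma^2)\boxplus\nu$. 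Since the class of unimodal probability measures is closed under weak convergence, the limit $S(0,\sigma^2)\boxplus\nu$ is unimodal; and as $\nu$ ranges over all unimodal measures, this says precisely that $S(0,\sigma^2)$ is freely strongly unimodal, contradicting Lemma \ref{NSU}.

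The routine inputs here are the affine invariance and the free central limit theorem (valid under the finite second moment hypothesis, without compact support). The two facts carrying the real weight are the weak continuity of $\boxplus$ and the weak-closedness of unimodality, which together transport the single-peak structure through the limit. The main obstacle I anticipate is the careful justification of passing unimodality to the weak limit: one must ensure the relevant family is tight—which is automatic here, since $\mu_n\boxplus\nu$ converges to the genuine probability measure $S(0,\sigma^2)\boxplus\nu$—and that the modes do not drift to infinity, so that the limiting density indeed retains a single mode. Once these continuity and closure statements are pinned down, the argument reduces to assembling the three stability properties of freely strongly unimodal measures around the free central limit theorem.
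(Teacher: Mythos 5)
Your proposal is correct and follows essentially the same route as the paper: contradiction via the free central limit theorem, closure of freely strongly unimodal measures under $\boxplus$ (so $\lambda^{\boxplus n}$ inherits the property), weak continuity of free convolution, weak closedness of the set of unimodal distributions, and the contradiction with Lemma \ref{NSU}. The only cosmetic difference is that you dilate $\lambda^{\boxplus n}$ (invoking affine invariance of freely strong unimodality) and test against an arbitrary unimodal $\nu$, whereas the paper dilates the specific test measure $\mu$ furnished by Lemma \ref{NSU} and uses $\DD_{1/\sqrt{nv}}(\DD_{\sqrt{nv}}(\mu)\boxplus\lambda^{\boxplus n}) = \mu\boxplus \DD_{1/\sqrt{nv}}(\lambda^{\boxplus n})$; both arrangements are sound.
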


On the other hand, there are many strongly unimodal distributions with finite variance in classical probability including the normal distributions and exponential distributions. Thus the strong unimodality breaks similarity between classical and free probability theories. 

In Section 5,  we focus on other classical/free L\'{e}vy processes with initial distributions. We consider a symmetric stable process with index $1$ and a positive stable process with index $1/2$. Then we prove large time unimodality similar to the case of Brownian motion but under different tail decay assumptions.


\section{Preliminaries}

\subsection{Definition of free convolution} \label{subsec:free convolution}
Let $\mu$ be a probability measure on $\mathbb{R}$. The Cauchy transform
\begin{equation}
G_\mu(z):=\int_\mathbb{R} \frac{1}{z-x}\,d\mu(x)
\end{equation}
is analytic on the complex upper half plane $\mathbb{C}^+$. We define
the truncated cones
\begin{equation}
\Gamma_{\alpha,\beta}^\pm:=\{z\in\mathbb{C}^\pm: |\re(z)|<\alpha |\im(z)|,\hspace{1mm} \im(z)>\beta\}, \qquad \alpha >0, \beta \in\R. 
\end{equation}
In \cite[Proposition 5.4]{BV}, it was proved that for any $\gamma<0$ there exist $\alpha,\beta>0$ and $\delta<0$ such that $G_\mu$ is univalent on $\Gamma_{\alpha,\beta}^+$ and $\Gamma_{\gamma,\delta}^- \subset G_\mu(\Gamma_{\alpha,\beta}^+)$. Therefore the right inverse function $G_\mu^{-1}$ exists on $\Gamma_{\gamma,\delta}^-$. We define the {\it R-transform of $\mu$} by
\begin{equation}
R_\mu(z):=G_\mu^{-1}(z)-\frac{1}{z}, \hspace{2mm} z\in \Gamma_{\gamma,\delta}^-.
\end{equation}
Then for any probability measures $\mu$ and $\nu$ on $\mathbb{R}$ there exists a unique probability measure $\lambda$ on $\mathbb{R}$ such that 
\begin{equation}
R_\lambda(z)=R_\mu(z)+R_\nu(z),
\end{equation}
for all $z$ in the intersection of the domains of the three transforms. We denote $\lambda:=\mu\boxplus \nu$ and call it the {\it (additive) free convolution} of $\mu$ and $\nu$.

\subsection{Free convolution with semicircle distributions}\label{sec Biane}
The {\it semicircle distribution $S(0,t)$} of mean $0$ and variance $t>0$ is the probability measure with density
\begin{equation}
\frac{1}{2\pi t}\sqrt{4t-x^2}
\end{equation}
on which its support is the interval $[-2\sqrt{t},2\sqrt{t}]$. We then compute its Cauchy transform and its R-transform (see, for example, \cite{NS}):
\begin{equation}
G_{S(0,t)}(z)=\frac{z-\sqrt{z^2-4t}}{2t}, \hspace{2mm} z\in\mathbb{C}^+,
\end{equation}
where the branch of the square root on $\mathbb{C}\setminus \mathbb{R}^+$ is such that $\sqrt{-1}=\ii$. Moreover 
\begin{equation}
R_{S(0,t)}(z)=tz, \hspace{2mm} z\in \mathbb{C}^-.
\end{equation}
Let $\mu$ be a probability measure on $\mathbb{R}$ and let $\mu_t = \mu \boxplus S(0,t)$. Then we define the following set:
\begin{equation}\label{U}
U_t:=\left\{u\in\mathbb{R}~\left|~ \int_\mathbb{R} \frac{1}{(x-u)^2}\,d\mu(x)>\frac{1}{t}\right.\right\},
\end{equation}
and the following function from $\mathbb{R}$ to $[0,\infty)$ by setting
\begin{equation}\label{v}
v_t(u):=\inf\left\{v\ge 0~ \left| ~\int_\mathbb{R} \frac{1}{(x-u)^2+v^2}\,d\mu(x)\le\frac{1}{t}\right.\right\}.
\end{equation}
Then $v_t$ is continuous on $\mathbb{R}$ and one has $U_t=\{x\in\mathbb{R}\mid v_t(u)>0\}$. Moreover, for every $u \in U_t$, $v_t(u)$ is the unique solution $v >0$ of the equation
\begin{equation}\label{eqv}
\int_{\mathbb{R}}\frac{1}{(x-u)^2+v^2}\,d\mu(x)=\frac{1}{t}.
\end{equation}
We define 
\begin{equation}
\Omega_{t,\mu}:=\{x+\ii y\in\mathbb{C}\mid y>v_t(x)\}.
\end{equation}
Then the map $H_t(z):=z+R_{S(0,t)}(G_\mu(z))$ is a homeomorphism from $\overline{\Omega_{t,\mu}}$ to $\mathbb{C}^+\cup\mathbb{R}$ and it is conformal from $\Omega_{t,\mu}$ onto $\mathbb{C}^+$. Hence there exists its inverse function $F_t:\mathbb{C}^+\cup\mathbb{R}\rightarrow \overline{\Omega_{t,\mu}}$ such that $F_t(\mathbb{C}^+)= \Omega_{t,\mu}$. Moreover, the domain $\Omega_{t,\mu}$ is equal to the connected component of the set $H_t^{-1}(\mathbb{C}^+)$ which contains $\ii y$ for large $y>0$. By \cite[Lemma 1, Proposition 1]{Bia}, for all $z\in\mathbb{C}^+$ one has 
\begin{equation}
\begin{split}
G_{\mu_t}(z) &= G_{\mu_t}(H_t(F_t(z)))\\
			&= G_{\mu_t}\Bigl(F_t(z)+R_{S(0,t)}\bigl(G_\mu(F_t(z))\bigr)\Bigr)\\
			&= G_{\mu}(F_t(z)).
\end{split}
\end{equation}
Moreover $G_{\mu_t}$ has a continuous extension to $\mathbb{C}^+\cup \mathbb{R}$. Then $\mu_t$ is Lebesgue absolutely continuous by Stieltjes inversion and the density function $p_t\colon\mathbb{R}\rightarrow [0,\infty)$ of $\mu_t$ is given by 
\begin{equation}\label{eq density}
p_t(\psi_t(x))=-\frac{1}{\pi}\lim_{y\rightarrow+0} \im(G_{\mu_t}(\psi_t(x)+\ii y))=\frac{v_t(x)}{\pi t}, \hspace{2mm}  x\in\mathbb{R},
\end{equation}
where 
\begin{equation}
\psi_t(x)=H_t(x+\ii v_t(x))=x+t\int_\mathbb{R} \frac{(x-u)}{(x-u)^2+v_t(x)^2}\,d\mu(u). 
\end{equation}
It can be shown that $\psi_t$ is a homeomorphism of $\mathbb{R}$.
Furthermore the topological support of $p_t$ is given by $\psi_t(\overline{U_t})$,  and $p_t$ extends to a continuous function on $\R$ and is real analytic in $\{x \in \R: p_t(x)>0\}$.

\begin{example}\label{Ber}
Let $\mu:=\frac{1}{2}\delta_{-1}+\frac{1}{2}\delta_{+1}$ be the symmetric Bernoulli distribution. Recall that the topological support of the density function $p_t$ is equal to $\psi_t(\overline{U_t})$. If $0< t\leq1$, we have 
\begin{equation}\label{Ut2}
U_t=\left\{u\in \mathbb{R} ~\left|~
   \sqrt{\frac{2+t-\sqrt{t^2+8t}}{2}}<|u|<\sqrt{\frac{2+t+\sqrt{t^2+8t}}{2}}
 \right.\right\}, 
 \end{equation}
and if $t> 1$, we have 
\begin{equation}\label{Ut1}
U_t=\left\{u\in\mathbb{R}~\left|~ -\sqrt{\frac{2+t+\sqrt{t^2+8t}}{2}}<u<\sqrt{\frac{2+t+\sqrt{t^2+8t}}{2}}\right.\right\}.
 \end{equation}
 If $0<t\leq 1$ then the number of connected components of $U_t$ is two, and therefore $\mu_t$ is not unimodal. If $t>1$ then $U_t$ is connected. By the density formula \eqref{eq density}, the unimodality of $\mu_t$ is equivalent to the unimodality of the function $v_t(u)$, the latter being expressed in the form
\begin{equation}\label{vt}
v_t(u)=\sqrt{\frac{-(2u^2+2-t)+\sqrt{t^2+16u^2}}{2}}. 
\end{equation}
Calculating its first derivative then shows that $\mu_t$ is unimodal if and only if $t\ge 4$; see Figures \ref{FBM0.1}--\ref{FBM6}.

\end{example}

According to Example \ref{Ber}, we have a question about what class of initial distributions implies the large time unimodality of free Brownian motion. We will give a result for this in Section \ref{sec FBM}.

\begin{figure}[h]
\begin{center}
\begin{minipage}{0.3\hsize}
\begin{center}
\includegraphics[width=40mm,clip]{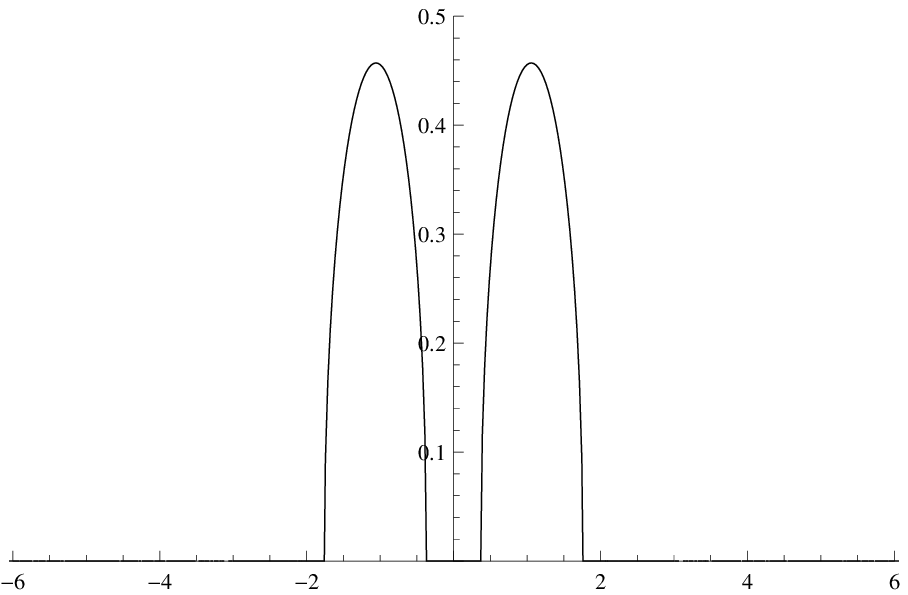}
\caption{$p_{0.25}(x)$} \label{FBM0.1}
\end{center}
  \end{minipage}
\begin{minipage}{0.3\hsize}
\begin{center}
\includegraphics[width=40mm,clip]{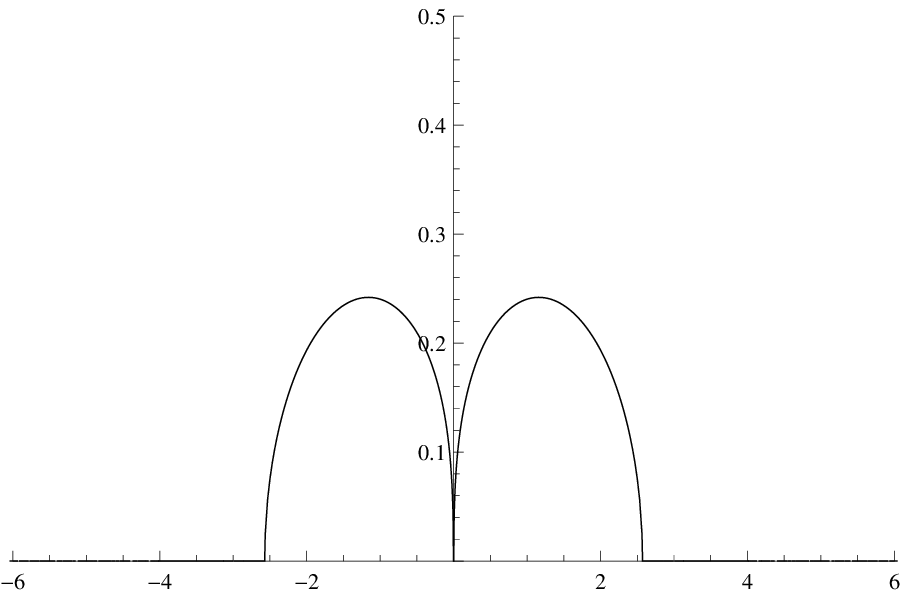}
\caption{$p_1(x)$}
\end{center}
\end{minipage}
\begin{minipage}{0.3\hsize}
\begin{center}
\includegraphics[width=40mm,clip]{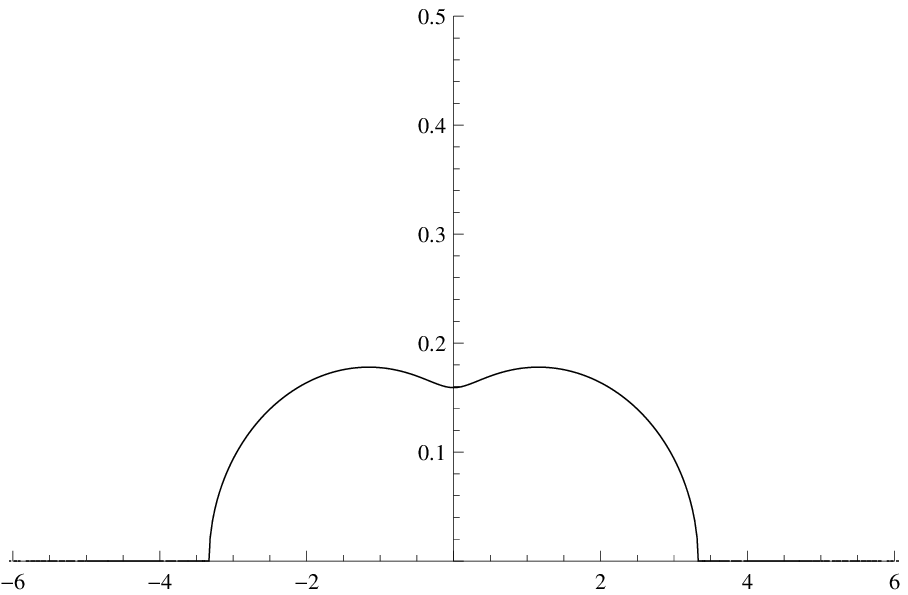}
\caption{$p_2(x)$}
\end{center}
\end{minipage}
\end{center}

\begin{center}
\begin{minipage}{0.3\hsize}
\begin{center}
\includegraphics[width=40mm,clip]{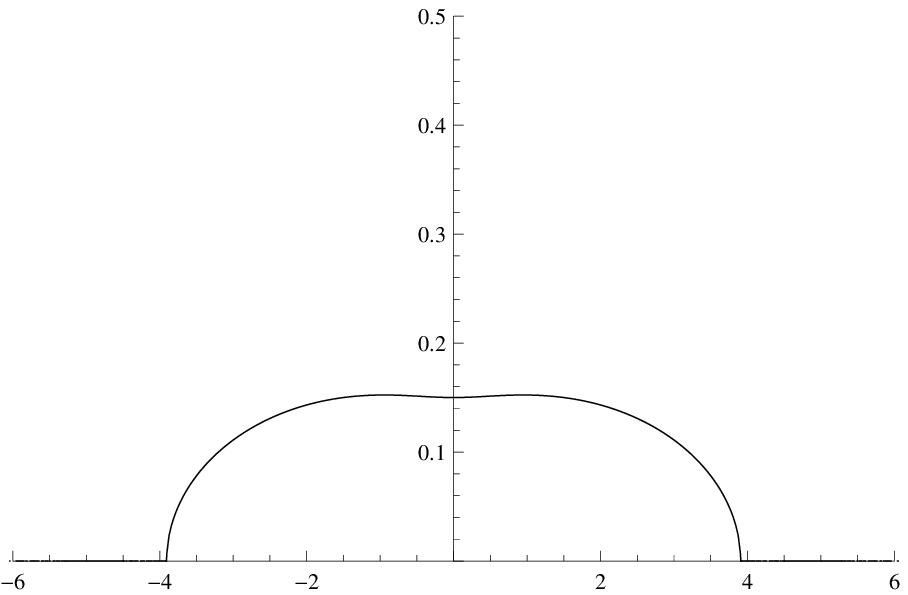}
\caption{$p_3(x)$}
\end{center}
\end{minipage}
\begin{minipage}{0.3\hsize}
\begin{center}
\includegraphics[width=40mm,clip]{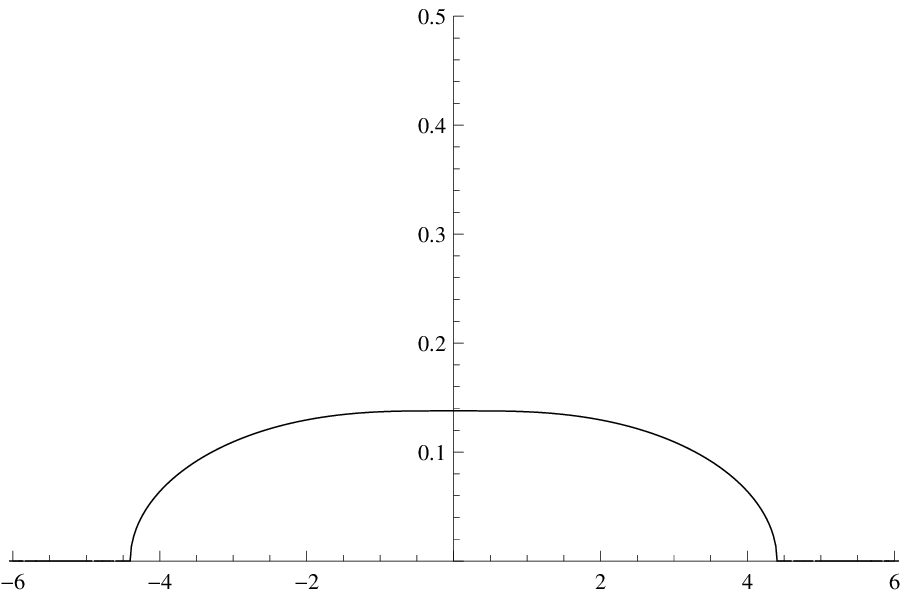}
\caption{$p_4(x)$}\label{FBM5}
\end{center}
\end{minipage}
\begin{minipage}{0.3\hsize}
\begin{center}
\includegraphics[width=40mm,clip]{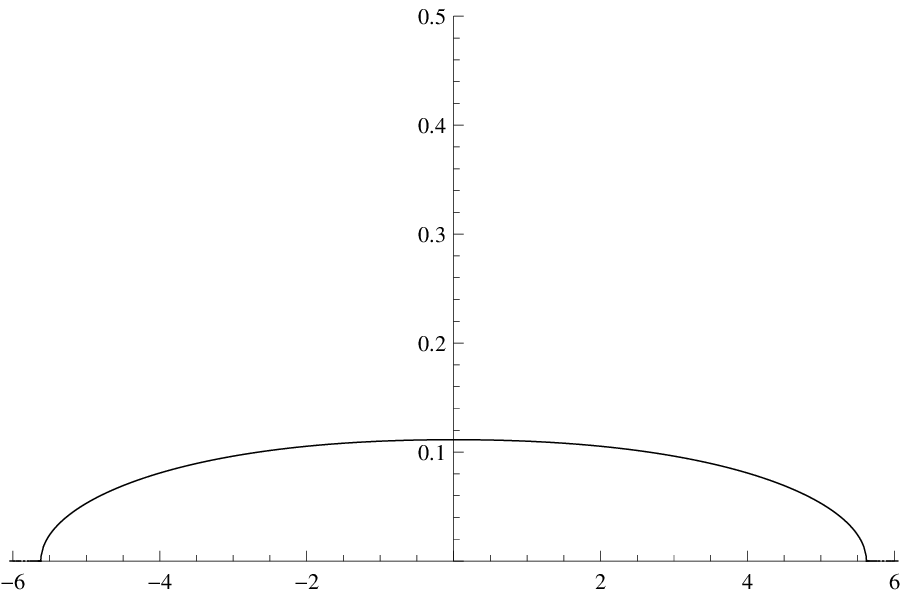}
\caption{$p_7(x)$}\label{FBM6}
\end{center}
\end{minipage}
\end{center}
\end{figure}

\subsection{A basic lemma for unimodality} 

A unimodal distribution is allowed to have a plateau or a discontinuous point in its density such as the uniform distribution or the exponential distribution. If we exclude such cases, then unimodality can be characterized in terms of levels of density. 

\begin{lemma}\label{lem basic unimodal}
Let $\mu$ be a probability measure on $\R$ that is Lebesgue absolutely continuous. Suppose that its density $p(x)$  extends to a continuous function on $\R$ and is real analytic in $\{x\in\R: p(x)>0\}$. Then $\mu$ is unimodal if and only if, for any $a>0$, the equation $p(x)=a$ has at most two solutions $x$.  
\end{lemma}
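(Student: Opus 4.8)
The plan is to prove the two implications separately, observing that the real-analyticity hypothesis is needed only for the ``only if'' direction, while integrability of $p$ (i.e.\ $\int_\R p\,dx=1$) does the work in the ``if'' direction. For the forward implication, assume $\mu$ is unimodal with mode $a_0$. Since $\mu$ is absolutely continuous we have $\mu(\{a_0\})=0$, and the monotone density $f$ from the definition agrees almost everywhere with the continuous version $p$; because a continuous function equal a.e.\ to a monotone one is itself monotone, $p$ is non-decreasing on $(-\infty,a_0]$ and non-increasing on $[a_0,\infty)$. Suppose some level $a>0$ had three solutions $x_1<x_2<x_3$. By pigeonhole two of them lie on the same monotone side of $a_0$, and monotonicity together with equal values forces $p\equiv a$ on the closed interval between them. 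This plateau lies inside the open set $\{p>0\}$, so $p$ is real analytic on the connected component $I$ containing it, and the identity theorem gives $p\equiv a$ on all of $I$. If $I$ has a finite endpoint $\alpha$, continuity yields $p(\alpha)=a>0$, contradicting maximality of the component (which forces $p(\alpha)=0$); if $I$ is unbounded, then $\int_I p=a\cdot|I|=\infty$ contradicts $\int_\R p=1$. Hence no positive level has more than two solutions. This is the one place where analyticity is indispensable: it is exactly what excludes genuine plateaus such as those of the uniform density.

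For the converse, the substantive direction, assume every positive level set has at most two points. The first task is to produce a mode. Integrability forces $\liminf_{x\to\pm\infty}p(x)=0$. I would first show $M:=\sup_\R p$ is finite and attained: if the supremum were approached only along a sequence tending to $+\infty$ (or $-\infty$), then $\limsup$ at that end equals $M$ while $\liminf$ there is $0$, so $p$ oscillates and crosses every intermediate level $a\in(0,M)$ infinitely often, contradicting the two-point bound; a bounded approximating sequence instead yields a finite maximizer by continuity, and the same oscillation argument rules out $M=\infty$. Call the maximizer $a_0$.

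The core step is monotonicity. Suppose $p$ fails to be non-decreasing on $(-\infty,a_0]$, so there are $s<t\le a_0$ with $p(s)>p(t)$; necessarily $t<a_0$, since $t=a_0$ would give $p(t)=M\ge p(s)$. Pick a level $c$ with $p(t)<c<p(s)$, which lies in $(0,M)$. Then the intermediate value theorem produces three distinct solutions of $p(x)=c$: one in $(t,a_0)$ using $p(a_0)=M>c>p(t)$, one in $(s,t)$ using $p(s)>c>p(t)$, and one to the left of $s$ using $\liminf_{x\to-\infty}p=0<c<p(s)$. This contradicts the hypothesis, so $p$ is non-decreasing on $(-\infty,a_0]$; the symmetric argument (using $\liminf_{x\to+\infty}p=0$) gives non-increasing on $[a_0,\infty)$, whence $\mu$ is unimodal with mode $a_0$.

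The main obstacle is the converse, and within it the delicate point is controlling $p$ near $\pm\infty$: the two-point hypothesis does not a priori prevent the supremum from escaping to infinity or the density from wiggling many times, so the whole argument hinges on converting integrability into the vanishing of $\liminf$ at infinity and then into a contradiction via crossing counts. Once the maximizer $a_0$ is pinned down, the three-crossing argument is routine bookkeeping.
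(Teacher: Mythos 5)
Your proof is correct and follows essentially the same route as the paper's own (one-line) argument, which says the lemma follows from the intermediate value theorem and the fact that a real analytic function never has plateaux: your converse direction is exactly the IVT crossing-count argument, and your forward direction makes rigorous the no-plateau claim via the identity theorem plus the boundary/integrability exclusion of a constant component. You have simply supplied in full the details the paper leaves to the reader.
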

The proof is just to use the intermediate value theorem and the fact that a real analytic function never has plateaux. Note that the idea of the above lemma was first introduced by Haagerup and Thorbj{\o}rnsen \cite{HT14} to prove that a ``free gamma distribution'' is unimodal.


\section{Free Brownian motion with initial distribution}

\subsection{Large time unimodality for free Brownian motion}\label{sec FBM}

The semicircular distribution is the free analogue of the normal distribution. Therefore, a free L\'{e}vy process is called {\it (standard) free Brownian motion with initial distribution $\mu$} if its distribution at time $t$ is given by $\mu\boxplus S(0,t)$.  In this section, we firstly prove that free Brownian motion with compactly supported initial distribution is unimodal for sufficiently large time, by using Biane's density formula (see Section \ref{sec Biane}). 

\begin{lemma}\label{lem unimodal}
Suppose that $t>0$. If for any $R>0$ the equation 
\begin{equation}\label{sol1}
\int_{\mathbb{R}}\frac{1}{(x-u)^2+R^2}\,d\mu(x)=\frac{1}{t}
\end{equation} has at most two solutions $u\in \mathbb{R}$, then $\mu \boxplus S(0,t)$ is unimodal.
\end{lemma}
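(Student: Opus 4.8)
The plan is to deduce unimodality from the level-set criterion of Lemma~\ref{lem basic unimodal}, transporting the problem from the density $p_t$ of $\mu_t := \mu \boxplus S(0,t)$ to the profile function $v_t$ via Biane's formula. As recorded at the end of Section~\ref{sec Biane}, $p_t$ extends to a continuous function on $\R$ and is real analytic on $\{x \in \R : p_t(x) > 0\}$, so the hypotheses of Lemma~\ref{lem basic unimodal} are satisfied. It therefore suffices to prove that for every $a > 0$ the equation $p_t(y) = a$ has at most two solutions $y \in \R$.

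First I would use the density formula \eqref{eq density}, which reads $p_t(\psi_t(u)) = v_t(u)/(\pi t)$, together with the fact that $\psi_t$ is a homeomorphism (in particular a bijection) of $\R$. Since every $y \in \R$ is uniquely of the form $y = \psi_t(u)$, the map $u \mapsto \psi_t(u)$ carries the solution set $\{u : v_t(u) = \pi t a\}$ bijectively onto $\{y : p_t(y) = a\}$; in particular these two sets have the same cardinality. Writing $R := \pi t a$, which ranges over all of $(0,\infty)$ as $a$ does, the task reduces to showing that $v_t(u) = R$ has at most two solutions $u$ for each $R > 0$.

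The remaining step is to match the level set $\{u : v_t(u) = R\}$ with the solution set of \eqref{sol1}. If $v_t(u) = R > 0$ then $u \in U_t$ by the description $U_t = \{u : v_t(u) > 0\}$, and by the uniqueness of the positive solution of \eqref{eqv} the value $R$ satisfies \eqref{sol1}. Conversely, if $u$ solves \eqref{sol1} for this $R$, then the strict pointwise inequality $\tfrac{1}{(x-u)^2} > \tfrac{1}{(x-u)^2 + R^2}$ forces $\int_\R (x-u)^{-2}\,d\mu(x) > 1/t$, so $u \in U_t$, and uniqueness again gives $v_t(u) = R$. Hence $\{u : v_t(u) = R\}$ is exactly the solution set of \eqref{sol1}, which has at most two elements by hypothesis. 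Combining the two reductions, $p_t(y) = a$ has at most two solutions for every $a > 0$, and Lemma~\ref{lem basic unimodal} yields the unimodality of $\mu_t$.

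The argument is essentially a change of variables, so there is no deep obstacle; the one point demanding care is the bookkeeping in the final identification. Specifically, one must check that a solution of \eqref{sol1} automatically lands in $U_t$ (so that $v_t$ is genuinely given by the unique positive root of \eqref{eqv} there) and that the restriction to $R > 0$ excludes the degenerate level $v_t(u) = 0$, i.e.\ the complement of $U_t$, which would otherwise contribute spuriously. Once this exact correspondence of level sets is in place, the bijectivity of $\psi_t$ does the rest.
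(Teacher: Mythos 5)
Your proof is correct and follows essentially the same route as the paper's: reduce to the level-set criterion of Lemma~\ref{lem basic unimodal} via the homeomorphism $\psi_t$ and Biane's formula $p_t(\psi_t(u)) = v_t(u)/(\pi t)$, then identify the level sets of $v_t$ with the solution sets of \eqref{sol1}. Your explicit two-way verification of that identification (in particular, that any solution of \eqref{sol1} with $R>0$ automatically lies in $U_t$, so uniqueness of the positive root of \eqref{eqv} applies) is stated only tersely in the paper, but it is the same argument.
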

\begin{proof} We adopt the notations $\mu_t= \mu \boxplus S(0,t), v_t, p_t$ and $\psi_t$ in Section \ref{sec Biane}. This proof is similar to \cite[Proposition 3.8]{HT2}. 
Recall that $\mu_t$ is absolutely continuous with respect to Lebesgue measure and the density function $p_t$ is continuous on $\mathbb{R}$ since its Cauchy transform $G_{\mu_t}$ has a continuous extension to $\mathbb{C}^+\cup \mathbb{R}$, and the density function is real analytic in $\psi_t(U_t)$ and continuous on $\R$ by Section \ref{sec Biane}. Since $\psi_t$ is a homeomorphism of $\R$, by Lemma \ref{lem basic unimodal} it suffices to show that for any $a>0$ the equation 
\begin{equation}
a=p_t(\psi_t(u))=\frac{v_t(u)}{\pi t}, \hspace{2mm} u\in \mathbb{R}
\end{equation}
has at most two solutions $u \in \R$.  
Then  
\begin{equation}
\left\{u\in\mathbb{R}~\left| ~a=\frac{v_t(u)}{\pi t} \right.  \right\}=\left\{u\in\mathbb{R}~\left| ~ \int_\mathbb{R}\frac{1}{(x-u)^2+(\pi a t)^2}\,d\mu(x) =\frac{1}{t}\right.\right\}
\end{equation}
since $v_t(u)$ is a unique solution of the equation \eqref{eqv}, if it is positive. By the assumption, the equation $a=\frac{v_t(u)}{\pi t}$ has at most two solutions in $\mathbb{R}$. 
\end{proof}

Now we are ready to prove Theorem \ref{thm:main1} \eqref{main1-1}. 

\begin{theorem}\label{unimodal1}
Let $\mu$ be a compactly supported probability measure, and let $D_\mu$ be the diameter of the support:
$
D_\mu = \sup\{|x-y|: x, y\in \supp(\mu)\}. 
$
 Then $\mu \boxplus S(0,t)$ is unimodal for $t\geq 4D_\mu^2$.
\end{theorem}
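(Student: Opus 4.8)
The plan is to invoke Lemma \ref{lem unimodal}: it suffices to prove that for every $R>0$ the equation $\int_\mathbb{R}((x-u)^2+R^2)^{-1}\,d\mu(x)=1/t$ has at most two solutions $u\in\mathbb{R}$. Write $\phi_R(u):=\int_\mathbb{R}((x-u)^2+R^2)^{-1}\,d\mu(x)$; this is smooth in $u$, positive, and tends to $0$ as $u\to\pm\infty$. I would first reduce the counting to a statement about local minima: if every local minimum value of $\phi_R$ strictly exceeds $1/t$, then the superlevel set $\{\phi_R\ge 1/t\}$ cannot have a bounded gap, since such a gap would force an interior minimum of $\phi_R$ with value below $1/t$; hence this set is a single interval and $\phi_R$ can equal $1/t$ only at its two endpoints. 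Thus the whole problem reduces to showing that every local minimum of $\phi_R$ has value $>1/t$ whenever $t\ge 4D_\mu^2$.

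Next I would localize the critical points. Letting $[a,b]$ be the convex hull of $\supp(\mu)$, so that $b-a=D_\mu$, differentiation under the integral gives
\[
\phi_R'(u)=\int_\mathbb{R}\frac{2(x-u)}{((x-u)^2+R^2)^{2}}\,d\mu(x),
\]
which is strictly positive for $u<a$ and strictly negative for $u>b$. Hence all critical points, and in particular all local minima, lie in $[a,b]$. For any $u_0\in[a,b]$ one has $|x-u_0|\le D_\mu$ on $\supp(\mu)$, which yields the elementary lower bound $\phi_R(u_0)\ge (D_\mu^2+R^2)^{-1}$.

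The crux is then to bound $R$ at a local minimum. Computing the second derivative,
\[
\phi_R''(u)=\int_\mathbb{R}\frac{6(x-u)^2-2R^2}{((x-u)^2+R^2)^{3}}\,d\mu(x),
\]
I would show that if $R^2\ge 3D_\mu^2$ then the numerator is $\le 0$ on $\supp(\mu)$ for every $u\in[a,b]$, and in fact strictly negative for $u\in(a,b)$ after checking that $|x-u|=D_\mu$ is impossible there; consequently $\phi_R$ is strictly concave on $(a,b)$ and, together with its monotonicity outside $[a,b]$, is unimodal with no local minimum at all. Therefore a local minimum can occur only when $R^2<3D_\mu^2$, in which case $\phi_R(u_0)\ge(D_\mu^2+R^2)^{-1}>(4D_\mu^2)^{-1}\ge 1/t$, closing the loop with the reduction in the first paragraph. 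The main obstacle I anticipate is making the second-derivative sign argument airtight at the threshold $R^2=3D_\mu^2$ and confirming that ``no local minimum together with the correct monotonicity at the ends'' genuinely yields unimodality; the remaining estimates are routine. I also expect the constant $4D_\mu^2$ to emerge exactly from combining the threshold $R^2<3D_\mu^2$ with the bound $(D_\mu^2+R^2)^{-1}$, and it is presumably far from optimal.
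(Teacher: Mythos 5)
Your proposal is correct and takes essentially the same route as the paper's proof: both invoke Lemma \ref{lem unimodal}, compute the same first and second derivatives of $\phi_R$, and split at the same threshold, using strict concavity on the support hull together with monotonicity outside it when $R^2\ge 3D_\mu^2$, and the lower bound $\phi_R \ge (D_\mu^2+R^2)^{-1} > (4D_\mu^2)^{-1}\ge 1/t$ when $R^2 < 3D_\mu^2$. Your reformulation of the solution count in terms of local-minimum values and superlevel-set connectivity is only an organizational variant of the paper's direct two-case counting, with the constant $4D_\mu^2$ arising in exactly the same way.
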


\begin{proof}
Applying a shift we may assume that $\mu$ is supported on $[-\frac{D_\mu}{2},\frac{D_\mu}{2}].$ 
For fixed $R>0$, we define by
\begin{equation}\label{phiR}
\xi_R(u):=\int_{\mathbb{R}}\frac{1}{(x-u)^2+R^2}\,d\mu(x), \hspace{2mm} u\in \mathbb{R}.
\end{equation}Then we have
\begin{equation}\label{dphiR}
\begin{split}
\xi_R'(u)=\int_{-\frac{D_\mu}{2}}^{\frac{D_\mu}{2}}\frac{2(x-u)}{\{(x-u)^2+R^2\}^2}\,d\mu(x),\\
\xi_R''(u)=\int_{-\frac{D_\mu}{2}}^{\frac{D_\mu}{2}}\frac{6(x-u)^2-2R^2}{\{(x-u)^2+R^2\}^3}\,d\mu(x).
\end{split}
\end{equation}
If $u\le -\frac{D_\mu}{2}$, then $x-u\ge-\frac{D_\mu}{2}-(-\frac{D_\mu}{2})=0$, so that $\xi_R'(u)>0$. If $u\ge \frac{D_\mu}{2}$, then $x-u\le \frac{D_\mu}{2}-\frac{D_\mu}{2}=0$, so that $\xi_R'(u)<0$. We take $t \ge 4D_\mu^2$ and consider the form of $\xi_R(u)$ on $u\in (-\frac{D_\mu}{2},\frac{D_\mu}{2})$.\\
If $0<R<\sqrt{3}D_\mu$, then $(x-u)^2+R^2<(\frac{D_\mu}{2}+\frac{D_\mu}{2})^2+(\sqrt{3}D_\mu)^2=4D_\mu^2 \le t$, so that 
\begin{equation}\label{ine1}
\xi_R(u) = \int_{\mathbb{R}}\frac{1}{(x-u)^2+R^2} \,d\mu(x)>\frac{1}{t},\hspace{2mm} u\in \Bigl(-\frac{D_\mu}{2},\frac{D_\mu}{2}\Bigr).
\end{equation}
Hence the equation $\xi_R(u)=\frac{1}{t}$ has at most two solutions $u\in \R$ if $t\ge 4D_\mu^2$. If $R\ge \sqrt{3}D_\mu$, then the function $\xi_R(u)$ satisfies the following three conditions:
\begin{itemize}
\item $\xi_R'(u)>0$ for all $u<-\frac{D_\mu}{2}$ and $\xi_R'(u)<0$ for all $u>\frac{D_\mu}{2}$,
\item $\xi_R'$ is continuous on $\mathbb{R}$,
\item $\xi_R''(u)<0$ for all $u\in \Bigl(-\frac{D_\mu}{2},\frac{D_\mu}{2}\Bigr)$.
\end{itemize}
By intermediate value theorem, $\xi_R'$ has a unique zero in $(-\frac{D_\mu}{2},\frac{D_\mu}{2})$ and hence $\xi_R$ takes a unique local maximum in the interval $(-\frac{D_\mu}{2},\frac{D_\mu}{2})$.  Therefore the equation $\xi_R(u)=\frac{1}{t}$ has at most two solutions if $t\ge 4D_\mu^2$.  Hence we have that the free convolution $\mu\boxplus S(0,t)$ is unimodal if $t\ge 4D_\mu^2$ by Lemma \ref{lem unimodal}.
\end{proof}

\begin{problem}
What is the optimal universal constant $C>0$ such that $\mu \boxplus S(0,t)$ is unimodal for all $t\ge C D_\mu^2$ and all probability measures $\mu$ with compact support? 
\end{problem}

We have already shown that $C\leq 4$. Recall from Example \ref{Ber} that if $\mu$ is the symmetric Bernoulli distribution on $\{-1,1\}$, then $\mu \boxplus S(0,t)$ is unimodal if and only if $t\ge 4$. Since the diameter $D_\mu$ of the support of $\mu$ is 2, we conclude that $2 \leq C  \leq 4$.

The next question is whether there exists a probability measure $\mu$ such that $\mu \boxplus S(0,t)$ is not unimodal for any $t>0$ or at least for sufficiently large $t>0$. Such a distribution must have an unbounded support if it exists. 
Such an example can be constructed with an idea similar to \cite[Proposition 4.13]{Hua} (see also \cite[Example 5.3]{HS}).

\begin{proposition}\label{notunimodal1}
Let $f\colon \R \to [0,\infty)$ be a Borel measurable function. Then there exists a probability measure $\mu$ such that $\mu \boxplus S(0,t)$ is not unimodal at any $t>0$ and 
\begin{equation}\label{f-moment}
\int_\R f(x) \,d\mu(x) <\infty. 
\end{equation}
\end{proposition}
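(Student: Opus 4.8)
The plan is to construct $\mu$ as a carefully weighted discrete measure supported on a sparse, rapidly increasing sequence of atoms. The guiding intuition from Example \ref{Ber} is that when the support of $\mu$ consists of well-separated clusters, the set $U_t$ (and hence the topological support of the density $p_t$) breaks into several connected components, and a density whose support is disconnected cannot be unimodal. The goal is to arrange infinitely many atoms so spread out that for \emph{every} $t>0$ the set $U_t$ has at least two connected components, while simultaneously keeping the $f$-moment finite by placing exponentially small mass at the far-out atoms.

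Concretely, I would fix a sequence $a_1 < a_2 < \cdots \to \infty$ growing very fast (to be specified) and weights $c_n > 0$ with $\sum_n c_n = 1$, and set $\mu = \sum_{n\ge 1} c_n \delta_{a_n}$. For a fixed $t>0$, recall from \eqref{U} that $u \in U_t$ iff $\int_\R (x-u)^{-2}\,d\mu(x) > 1/t$. Near an isolated atom $a_n$, the single term $c_n/(a_n-u)^2$ already exceeds $1/t$ once $|u - a_n|$ is small enough, so each atom contributes a small neighborhood to $U_t$; the key is to choose the gaps $a_{n+1}-a_n$ large enough, relative to the weights, that between consecutive clusters the integral drops strictly below $1/t$, forcing a genuine gap in $U_t$. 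Since $U_t$ having two or more connected components implies (via the density formula \eqref{eq density}, exactly as in Example \ref{Ber}) that $p_t$ vanishes on an interval separating two regions where it is positive, $\mu_t$ fails to be unimodal. The subtlety is that $U_t$ must be disconnected \emph{for all} $t$ simultaneously: for large $t$ the threshold $1/t$ is small and $U_t$ tends to swell, so the atoms must be spaced to defeat arbitrarily large $t$. This is achieved by letting the gaps grow fast enough (e.g.\ super-geometrically) that even a large but fixed $t$ still leaves a separating gap between the first atom $a_1$ and the tail $\{a_n : n\ge N(t)\}$, guaranteeing at least two components.

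The $f$-moment constraint \eqref{f-moment} is handled last and is essentially free: since $\int_\R f\,d\mu = \sum_n c_n f(a_n)$, I would choose the weights $c_n$ small enough (after the locations $a_n$ are fixed) that $c_n f(a_n) \le 2^{-n}$, which forces the sum to converge regardless of how fast $f$ grows. Because the disconnectedness argument constrains only the \emph{geometry} (the spacing $a_{n+1}-a_n$) and not the precise weights, one has complete freedom to shrink the $c_n$ afterward; this is the mechanism by which a measure with very thin tails can still fail large-time unimodality, mirroring Theorem \ref{thm:main1}(2).

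I expect the main obstacle to be the uniform-in-$t$ control: proving that a \emph{single} fixed measure $\mu$ yields disconnected $U_t$ for \emph{every} $t>0$, rather than only for $t$ in a bounded range. The difficulty is that enlarging $t$ both lowers the threshold $1/t$ (making $U_t$ grow) and is unbounded, so no finite spacing handles all $t$ at once unless the atoms are placed at super-geometrically increasing locations; the technical heart of the proof will be a clean quantitative estimate showing that, for the chosen sparse sequence, between some pair of clusters the integral $\int_\R (x-u)^{-2}\,d\mu(x)$ dips below $1/t$ no matter how large $t$ is. Once that separation estimate is in hand, the reduction to non-unimodality via Lemma \ref{lem basic unimodal} and the density formula is routine.
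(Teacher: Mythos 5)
Your proposal follows essentially the same route as the paper's proof: take $\mu=\sum_n w_n\delta_{a_n}$ with gaps $a_{n+1}-a_n\to\infty$, disconnect $U_t$ (hence $\supp(\mu_t)=\psi_t(\overline{U_t})$) for \emph{every} $t>0$ by checking the midpoints $b_k=\frac{a_k+a_{k+1}}{2}$, where $\int_\R (b_k-x)^{-2}\,d\mu(x)\le 4/(a_{k+1}-a_k)^2$ independently of the weights, and only then choose the weights (the paper takes $w_n=c/(n^2\max\{f(a_n),1\})$) to force $\int_\R f\,d\mu<\infty$. The separation estimate you defer as the ``technical heart'' is exactly this one-line midpoint bound, which uses only $\sum_n w_n=1$ and the geometry of the atoms — so your plan is sound, and in fact gaps merely tending to infinity suffice, with no need for super-geometric spacing.
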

\begin{proof} We follow the notations in Section \ref{sec Biane}. 
Let $\{w_n\}_n$ and $\{a_n\}_n$ be sequences in $\mathbb{R}$ satisfying
\begin{itemize}
\item $w_n>0, \sum_{n=1}^\infty w_n=1, $
\item $a_{n+1}>a_n, \hspace{2mm} n\ge1,$ 
\item $\displaystyle\lim_{n\rightarrow\infty} (a_{n+1}-a_n)=\infty.$
\end{itemize}
Consider the probability measure $\mu:=\sum_{n=1}^\infty w_n\delta_{a_n}$ on $\mathbb{R}$ and define the function
\begin{equation}\label{phimu}
X_{\mu}(u):=\int_{\mathbb{R}}\frac{1}{(u-x)^2}\,d\mu(x)=\sum_{n=1}^\infty \frac{w_n}{(u-a_n)^2}.
\end{equation}
Recall that $U_t$ is the set of $u\in\R$ such that $X_{\mu}(u) >1/t$. Set $b_k:=\frac{a_{k+1} + a_{k}}{2}$ for all $k\in \mathbb{N}$. Then we have
\begin{equation}\label{xkan}
|b_k-a_n|\ge \frac{a_{k+1}-a_k}{2}, \hspace{3mm} k,n\in \mathbb{N}
\end{equation}
and so 
$$
X_{\mu}(b_k)\le \left(\frac{2}{a_{k+1}-a_k}\right)^2 \sum_{n=1}^\infty w_n=\left(\frac{2}{a_{k+1}-a_k}\right)^2\rightarrow 0
$$ as $k\rightarrow\infty$. This means that for each $t>0$ there exists an integer $K(t)>0$ such that $X_{\mu}(b_k)<\frac{1}{t}$ for all $k\ge K(t)$. This implies that, for $k \geq K(t)$,  the closure of $U_t$ does not contain $b_k$. 
Therefore, there exists a sequence $\{\epsilon_k(t)\}_{k =K(t)+1}^\infty$ of positive numbers such that 
\begin{equation}\label{suppmut}
U_t \cap (b_{K(t)}, \infty) = \bigcup_{k=K(t)+1}^\infty (a_k-\epsilon_k(t), a_k+\epsilon_k(t)),  
\end{equation}
where the closures of the intervals are disjoint. Thus the set $\supp(\mu_t) = \psi_t(\overline{U_t})$ consists of infinitely many connected components for all $t>0$, and hence $\mu_t$ is not unimodal for any $t>0$. 

In the above construction, the weights $w_n$ are only required to be positive. Therefore, we may take 
\begin{equation}
w_n = \frac{c}{n^2\max\{f(a_n), 1\}},
\end{equation}
 where $c>0$ is a normalizing constant. Then we can show the integrability condition \eqref{f-moment}. 
\end{proof}
Proposition \ref{notunimodal1} shows that tail decay estimates of the initial distribution do not guarantee the large time unimodality. 
 In Section \ref{sec:CBM} we see that the situation is different for classical Brownian motion.

\subsection{Free Brownian motion with symmetric initial distribution}\label{sec Sym}
This section proves a unimodality result of different flavor. 
It is well known that if $\mu$ and $\nu$ are symmetric unimodal distributions, then $\mu\ast \nu$ is also (symmetric) unimodal (see \cite[Exercise 29.22]{Sat}). The free analogue of this statement is not known. 
\begin{conjecture}
Let $\mu$ and $\nu$ be symmetric unimodal distributions. Then $\mu\boxplus \nu$ is unimodal. 
\end{conjecture}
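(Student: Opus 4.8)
The plan is to recast unimodality as a statement about the density and then attack it through the analytic subordination description of free convolution. Since $\mu$ and $\nu$ are symmetric, so is $\lambda:=\mu\boxplus\nu$, and an even density is unimodal precisely when it is non-increasing on $(0,\infty)$; by Lemma \ref{lem basic unimodal} (once a density exists and is real analytic where positive) this is equivalent to the level equation $p(x)=a$ having at most two solutions for each $a>0$. A clean, \emph{non-circular} reduction lets us assume regularity at the outset: replacing $\mu,\nu$ by $\mu\boxplus S(0,\epsilon),\,\nu\boxplus S(0,\epsilon)$ gives symmetric unimodal laws (Theorem \ref{unimodal2}) whose densities are continuous on $\R$ and real analytic where positive (Section \ref{sec Biane}); since $\boxplus$ is weakly continuous and the class of unimodal measures is weakly closed, proving unimodality of each $\mu\boxplus\nu\boxplus S(0,2\epsilon)$ and letting $\epsilon\to+0$ recovers the general case. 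Thus we may assume $\mu$, $\nu$, and hence $\lambda$, have such regular densities, and the task becomes showing $p'(x)\le0$ for $x>0$.

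It is worth recording why the classical argument does not transfer, since this explains the need for an analytic attack. Classically one uses Khinchin's representation of a symmetric unimodal law as a mixture $\int_0^\infty U_{[-a,a]}\,d\rho(a)$ of uniform laws, together with the facts that ordinary convolution distributes over mixtures and that $U_{[-a,a]}\ast U_{[-b,b]}$ is a unimodal trapezoidal law. Free convolution is \emph{not} linear in either argument, so it does not commute with such mixtures, and no analogue of the trapezoidal computation is available. This structural failure is the ultimate source of the difficulty.

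The analytic approach I would pursue uses the subordination functions $\omega_\mu,\omega_\nu\colon\C^+\to\C^+$ satisfying $G_\lambda(z)=G_\mu(\omega_\mu(z))=G_\nu(\omega_\nu(z))$ and $\omega_\mu(z)+\omega_\nu(z)=z+1/G_\lambda(z)$. Writing the boundary value $\omega_\mu(x+\ii0)=u(x)+\ii w(x)$ with $w(x)\ge0$, Stieltjes inversion yields the Poisson-type formula
\begin{equation}
p(x)=\frac{1}{\pi}\int_\R\frac{w(x)}{(s-u(x))^2+w(x)^2}\,d\mu(s),\qquad \omega_\mu(x+\ii0)=u(x)+\ii w(x),
\end{equation}
which is exactly the structure of Biane's formula in Section \ref{sec Biane}, except that there $w$ was tied to $u$ by the single scalar relation $\int_\R d\mu(s)/((s-u)^2+w^2)=1/t$ coming from $R_{S(0,t)}(z)=tz$. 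By symmetry one has $u(0)=0$, with $u$ odd and $w$ even, so the heart of the matter is to extract from the coupled subordination system — for \emph{general} symmetric unimodal $\nu$ — enough monotonicity of the implicitly defined $u$ and $w$ to force $p'(x)\le0$ on $(0,\infty)$. One would differentiate the formula above and try to combine the monotonicity encoded in $G_\mu,G_\nu$ being derivatives of even unimodal profiles with the defining relations for $\omega_\mu,\omega_\nu$.

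The main obstacle, and the reason the statement is posed as a conjecture, is the absence of an explicit density formula when $\nu$ is arbitrary. The regularization above does \emph{not} reduce the problem to the semicircular case: it merely replaces $\nu$ by another (now smooth) symmetric unimodal law, so Theorem \ref{unimodal2} cannot be invoked directly. Unlike Biane's setting, $\omega_\mu$ and $\omega_\nu$ are defined only implicitly through a coupled pair of equations, the auxiliary functions $u$ and $w$ are no longer linked by a single scalar equation, and controlling the sign of $p'$ seems to require simultaneous monotonicity information about \emph{both} subordination functions that I do not see how to derive from symmetry and unimodality of $\mu$ and $\nu$ alone. Making this step rigorous — presumably via a free analogue of Khinchin's representation, or via a direct boundary estimate on $\omega_\mu,\omega_\nu$ that propagates the single-peak property — is where the real work lies.
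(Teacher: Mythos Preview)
The statement is a \emph{conjecture} in the paper, not a theorem: the paper offers no proof and explicitly says the free analogue of the classical symmetric-unimodal closure under convolution ``is not known,'' proving only the special case $\nu=S(0,t)$ in Theorem~\ref{unimodal2}. There is therefore no paper proof to compare your proposal against.

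Your write-up is consistent with this: it is not a proof but a heuristic discussion that correctly isolates the obstruction. Your regularization step is sound (Theorem~\ref{unimodal2} does give symmetric unimodality of $\mu\boxplus S(0,\epsilon)$ and $\nu\boxplus S(0,\epsilon)$, and Biane's results give the regularity of $\lambda\boxplus S(0,2\epsilon)$), and you are right that this does not collapse the problem to the semicircular case---one is still left with a free convolution of two arbitrary symmetric unimodal laws. Your diagnosis that the missing ingredient is control of \emph{both} subordination functions, in the absence of the single scalar constraint that makes Biane's formula tractable, matches exactly why the paper leaves this open. In short: you have not proved the conjecture, but neither has the paper; your proposal accurately identifies the gap rather than filling it.
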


We can give a positive answer in the special case when one distribution is a semicircle distribution.

\begin{theorem}\label{unimodal2} Let $\mu$ be a symmetric unimodal distribution on $\R$. Then $\mu\boxplus S(0,t)$ is unimodal for any $t>0$. 
\end{theorem}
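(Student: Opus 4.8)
The plan is to verify the hypothesis of Lemma \ref{lem unimodal}: for every $R>0$ I will show that the function $\xi_R(u)=\int_\R \frac{1}{(x-u)^2+R^2}\,d\mu(x)$ is strictly unimodal with peak at $u=0$, so that the level equation $\xi_R(u)=1/t$ has at most two solutions for every $t>0$. Since $\mu$ is symmetric, $\xi_R$ is an even function, so it suffices to prove $\xi_R'(u)<0$ for all $u>0$; by symmetry this gives $\xi_R'(u)>0$ for $u<0$, and hence strict unimodality. The degenerate case $\mu=\delta_0$ is immediate, since then $\mu\boxplus S(0,t)=S(0,t)$ is unimodal, so I assume $\mu\neq\delta_0$.

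Being symmetric and unimodal, $\mu$ has mode $0$ and can be written as $\mu=\mu(\{0\})\delta_0+g(x)\,dx$, where $g$ is even and non-increasing on $[0,\infty)$. Differentiating under the integral sign (legitimate for fixed $R>0$, since the denominators are bounded below by $R^2$) gives $\xi_R'(u)=\int_\R k(x-u)\,d\mu(x)$, where $k(w):=\frac{2w}{(w^2+R^2)^2}$. I would then fold the integral over $(-\infty,0)$ onto $(0,\infty)$ via $x\mapsto -x$; using that $k$ is odd and that $\mu$ is symmetric, this yields
\[
\xi_R'(u)=-k(u)\,\mu(\{0\})+\int_0^\infty\bigl(k(x-u)-k(x+u)\bigr)\,g(x)\,dx,
\]
where the first term is already strictly negative for $u>0$.

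The heart of the argument is to show that the remaining integral is $\le 0$. I would integrate by parts using the even antiderivative $\rho(w):=-\frac{1}{w^2+R^2}$ of $k$, so that $k(x-u)-k(x+u)=\frac{d}{dx}\Phi(x)$ with $\Phi(x):=\rho(x-u)-\rho(x+u)$. The crucial structural point is that $\rho$ is even, whence $\Phi(0)=\rho(-u)-\rho(u)=0$; together with $\Phi(x)g(x)\to 0$ as $x\to\infty$ and as $x\to0^+$ (the latter because $xg(x)\to0$ for a non-increasing integrable $g$, while $\Phi(x)=O(x)$), both boundary terms vanish. The Lebesgue--Stieltjes integration by parts then gives
\[
\int_0^\infty\bigl(k(x-u)-k(x+u)\bigr)g(x)\,dx=\int_0^\infty \Phi(x)\,\bigl(-dg(x)\bigr).
\]
Since $(x+u)^2>(x-u)^2$ for $x,u>0$, one has $\Phi(x)=\frac{1}{(x+u)^2+R^2}-\frac{1}{(x-u)^2+R^2}<0$, while $-dg$ is a non-negative measure because $g$ is non-increasing; hence this integral is $\le 0$. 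Combining with the strictly negative atom term yields $\xi_R'(u)<0$ for all $u>0$, and Lemma \ref{lem unimodal} then finishes the proof.

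The main obstacle I anticipate is carrying out the integration by parts rigorously, since $g$ is only non-increasing (hence merely differentiable almost everywhere) and may blow up at the origin; the clean route is to phrase it through the signed Lebesgue--Stieltjes measure $dg$ and to exploit the evenness of $\rho$, which is precisely what forces the boundary term at the origin to vanish. A secondary point requiring care is strictness: I must exclude that $\xi_R$ is constant on a subinterval of $(0,\infty)$, which would create a plateau and break the two-solution bound. This follows because the inequality above is strict unless simultaneously $\mu(\{0\})=0$ and $g$ is constant on $(0,\infty)$, i.e. unless $\mu=\delta_0$, the case already excluded.
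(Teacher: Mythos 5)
Your proof is correct, but it takes a genuinely different route from the paper's at the decisive step. Both arguments reduce the theorem, via Lemma \ref{lem unimodal}, to showing that for each $R>0$ the level equation $\xi_R(u)=1/t$ has at most two solutions. The paper then observes that $\xi_R$ is, up to a positive constant, the density of the classical convolution $\mu\ast C_R$ with the Cauchy distribution, invokes the classical theorem that a convolution of two symmetric unimodal distributions is unimodal (\cite[Exercise 29.22]{Sat}), and converts unimodality into the two-solution bound through real analyticity and Lemma \ref{lem basic unimodal}. You instead prove the two-solution bound by hand: decomposing $\mu=\mu(\{0\})\delta_0+g(x)\,dx$ with $g$ even and non-increasing on $(0,\infty)$, and showing $\xi_R'(u)<0$ for $u>0$ by integration by parts against the monotone density, exploiting that the antiderivative $\rho$ of the odd kernel $k$ is even, which kills the boundary term at the origin. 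In effect you re-prove, for the Cauchy kernel, exactly the special case of the classical convolution theorem that the paper cites. What your argument buys: it is self-contained (no appeal to \cite{Sat}), it bypasses Lemma \ref{lem basic unimodal} and the real-analyticity input entirely, and it gives the stronger conclusion that $\xi_R$ is strictly decreasing on $(0,\infty)$, locating the mode at the symmetry center. What it costs: the Lebesgue--Stieltjes integration by parts must be justified for a possibly unbounded $g$; you flag this, and your sketch ($xg(x)\to 0$ as $x\to 0^+$, $\Phi(x)=O(x)$ near $0$, $\Phi$ bounded and $g\to 0$ at infinity) is the right one --- alternatively, writing $g(x)=\int_{(x,\infty)}(-dg)(s)$ and applying Fubini avoids boundary terms altogether. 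One harmless bookkeeping slip: the degenerate case in which your estimate fails to be strict, namely $\mu(\{0\})=0$ with $g$ constant on $(0,\infty)$, is the zero measure rather than $\delta_0$, so strictness actually holds for every symmetric unimodal probability measure, including $\delta_0$ itself (where the atom term alone is strictly negative); your separate treatment of $\delta_0$ is therefore unnecessary, though not incorrect.
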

\begin{remark}
There is a unimodal probability measure $\mu$ such that $\mu \boxplus S(0,1)$ is not unimodal (see Lemma \ref{NSU}). Hence we cannot remove the assumption of symmetry of $\mu$. 
\end{remark}
\begin{proof} By Lemma \ref{lem unimodal} it suffices to show that for any $R>0$ the equation
\begin{equation}\label{eq unimodal}
\int_\R \frac{1}{(u-x)^2+R^2}\,d\mu(x)  = \frac{1}{t} 
\end{equation}
has at most two solutions $u \in \R$. Up to a constant multiplication, the LHS is the density of $\mu* C_R$ which is unimodal, since the Cauchy distribution $C_R$ is symmetric unimodal, and is real analytic. Therefore, Lemma \ref{lem basic unimodal} implies that the equation \eqref{eq unimodal} has at most two solutions. This shows that $\mu \boxplus S(0,t)$ is unimodal. 
\end{proof}

\subsection{Freely strong unimodality} \label{sec SU}
In classical probability, a probability measure is said to be {\it strongly unimodal} if $\mu\ast \nu$ is unimodal for all unimodal distributions $\nu$ on $\R$. A distribution is strongly unimodal if and only if the distribution is Lebesgue absolutely continuous, supported on an interval and a version of its density is log-concave (see \cite[Theorem 52.3]{Sat}). From this characterization, the normal distributions are strongly unimodal. We discuss the free version of strong unimodality.

\begin{definition}
A probability measure $\mu$ on $\R$ is said to be {\it freely strongly unimodal} if $\mu\boxplus \nu$ is unimodal for all unimodal distributions $\nu$ on $\R$. 
\end{definition}

\begin{lemma}\label{NSU}
The semicircle distributions are not freely strongly unimodal.  
\end{lemma}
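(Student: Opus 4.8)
The statement claims that a semicircle distribution $S(0,t)$ is not freely strongly unimodal, meaning there exists \emph{some} unimodal $\nu$ for which $\nu \boxplus S(0,t)$ fails to be unimodal. By the scaling relation $S(0,t) = \sqrt{t}\cdot S(0,1)$ and the fact that free convolution commutes with dilation, it suffices to treat $S(0,1)$ (or any fixed variance) and rescale; so I would first reduce to producing a single counterexample at one convenient value of $t$. The most economical source of such a counterexample is already in hand: Example \ref{Ber} shows that for the symmetric Bernoulli law $\mu = \frac12\delta_{-1} + \frac12\delta_{+1}$, the measure $\mu \boxplus S(0,t)$ is \emph{not} unimodal for $0 < t < 4$. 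Thus I would pick $t_0 \in (0,4)$, say $t_0 = 1$, so that $\mu \boxplus S(0,t_0)$ is non-unimodal.

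\textbf{Key steps.} The remaining issue is that the Bernoulli law $\mu$ itself is not unimodal (it is a sum of two atoms), so it cannot be used directly as the witness $\nu$ in the definition of freely strong unimodality. The plan is therefore to replace $\mu$ by a genuinely unimodal approximation and argue by a limiting/continuity argument. Concretely, I would take $\nu_\epsilon$ to be a unimodal smoothing of $\mu$ — for instance $\nu_\epsilon = \mu \ast N(0,\epsilon)$ or $\mu \ast C_\epsilon$ (convolution with a narrow symmetric unimodal kernel). For small $\epsilon$ the measure $\nu_\epsilon$ is symmetric but \emph{bimodal}, hence not unimodal; this is the wrong direction. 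So instead I would reverse the roles: fix a small $t_0 \in (0,4)$ and set $\nu := \mu \boxplus S(0,t_0)$ itself as a \emph{candidate unimodal input}, then free-convolve with a \emph{tiny} additional semicircle to test against. But cleaner is the following: since the phrase ``semicircle distributions are not freely strongly unimodal'' requires \emph{one} unimodal $\nu$ with $S(0,s)\boxplus \nu$ non-unimodal, I would let $\nu$ be a symmetric unimodal law that is sufficiently ``spread'' (close to Bernoulli), and choose the semicircle variance small. The natural choice is to let $\nu$ be a symmetric unimodal law whose free convolution with a small semicircle reproduces the Bernoulli phenomenon; explicitly, approximate $\delta_{-1}+\delta_{+1}$ by unimodal $\nu_\epsilon$ and use continuity of free convolution in the weak topology together with the density formula of Section \ref{sec Biane} to show $\nu_\epsilon \boxplus S(0,t_0)$ inherits non-unimodality for small $\epsilon$.

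\textbf{Main obstacle.} The delicate point is the continuity/stability argument: non-unimodality is an \emph{open}-type property for density functions (two separated bumps with a strict valley between them), so I must show that this qualitative feature of $\mu \boxplus S(0,t_0)$ survives when $\mu$ is perturbed to a nearby \emph{unimodal} measure $\nu_\epsilon$. I expect this to be the heart of the proof, and I would handle it via the explicit Biane density formula \eqref{eq density}: the number and location of modes of $\nu_\epsilon \boxplus S(0,t_0)$ are controlled by the function $v_{t_0}$ and the set $U_{t_0}$ attached to $\nu_\epsilon$, which vary continuously with $\nu_\epsilon$. Since the Bernoulli case at $t_0 \in (0,4)$ has a strict local minimum of the density between two strict local maxima (equivalently, $v_{t_0}$ has two separated bumps), strict inequalities persist under small perturbation, so $\nu_\epsilon \boxplus S(0,t_0)$ is non-unimodal for $\epsilon$ small enough. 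A technically simpler alternative, which I would present if the perturbation analysis proves cumbersome, is to exhibit a concrete unimodal $\nu$ (e.g.\ a suitable triangular or truncated-Cauchy density concentrated near $\pm 1$) and verify non-unimodality of $\nu \boxplus S(0,t_0)$ by a direct computation of $v_{t_0}$ through \eqref{eqv}, checking via Lemma \ref{lem basic unimodal} that some level $a>0$ yields more than two solutions of $v_{t_0}(u)=\pi a t_0$.
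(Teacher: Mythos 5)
Your plan has a genuine gap, and two results of the paper itself show that it cannot be repaired in the form you propose. First, the approximation step is impossible: the set of unimodal probability measures is weakly closed (this exact fact is invoked in the proof of Theorem \ref{thm:NoSU}), so no sequence of unimodal measures $\nu_\epsilon$ can converge weakly to the non-unimodal Bernoulli law $\tfrac12\delta_{-1}+\tfrac12\delta_{+1}$; if it could, the Bernoulli law would itself be unimodal. Equivalently, every unimodal law stays at a definite L\'{e}vy--Prokhorov distance from the Bernoulli law, so the phrase ``approximate $\delta_{-1}+\delta_{+1}$ by unimodal $\nu_\epsilon$'' has no valid instantiation and the continuity/perturbation argument never gets off the ground --- a difficulty you half-notice (``this is the wrong direction'') but then reinstate. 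Second, and decisively: every concrete witness you suggest is \emph{symmetric} (a symmetric ``spread'' law close to Bernoulli, a triangular density, a truncated Cauchy), and Theorem \ref{unimodal2} of this paper says that $\nu \boxplus S(0,t)$ is unimodal for \emph{every} symmetric unimodal $\nu$ and every $t>0$. Hence no symmetric unimodal witness exists at all; any counterexample must be unimodal and \emph{asymmetric}, which your proposal never produces. (Your bimodal candidates, densities ``concentrated near $\pm1$,'' are not admissible witnesses in the first place.)

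For comparison, the paper's proof does not pass through the Bernoulli example. It imports the classical fact that the Cauchy distribution $C_1$ is not classically strongly unimodal (its density is not log-concave), so there exists a \emph{unimodal} --- necessarily asymmetric --- measure $\mu$ with $\mu * C_1$ not unimodal. Up to a constant, the density of $\mu*C_1$ is $u \mapsto \int_\R \frac{1}{(u-y)^2+1}\,d\mu(y)$, which is real analytic, so by Lemma \ref{lem basic unimodal} some level $\frac{1}{t}$ is attained at three distinct points $x_1,x_2,x_3$. By the defining equation \eqref{eqv} this means $v_t(x_i)=1$ for $i=1,2,3$, so Biane's formula \eqref{eq density} gives $p_t(\psi_t(x_i))=\frac{1}{\pi t}$ at three distinct points ($\psi_t$ being a homeomorphism), and Lemma \ref{lem basic unimodal} then shows $\mu \boxplus S(0,t)$ is not unimodal; a dilation handles all semicircle distributions. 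If you want to salvage your approach, this is the structural point to absorb: the witness must be asymmetric, and the failure of log-concavity of the Cauchy kernel is precisely what manufactures one.
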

\begin{proof}We follow the notations $v_t, p_t$ and $\psi_t$ in Section \ref{sec Biane}. 
The Cauchy distribution is not strongly unimodal since its density is not log-concave. Hence, there exists a probability measure $\mu$ such that $\mu\ast C_1$ is not unimodal. Since the density of $\mu\ast C_1 $ is real analytic on $\R$, from Lemma \ref{lem basic unimodal} there exists $t >0$ such that the equation
\begin{equation}
\pi \frac{d (\mu\ast C_1)}{d x} = \int_\R \frac{1}{(x-y)^2+1}\,d\mu(y)  = \frac{1}{t}
\end{equation}
has at least three distinct solutions $x_1,x_2,x_3 \in\R$. This shows that $v_{t}(x_i) = 1$ for $i=1,2,3$ and so the density $p_{t}$ of $\mu\boxplus S(0,t)$ satisfies that $p_{t} (\psi_{t}(x_i)) = \frac{1}{\pi t}$ for $i=1,2,3$. Hence $S(0,t) \boxplus \mu$ is not unimodal. By changing the scaling, we conclude that all semicircle distributions are not freely strongly unimodal. 
\end{proof}

\begin{theorem}\label{thm:NoSU} Let $\lambda$ be a probability measure with finite variance, not being a delta measure. Then $\lambda$ is not freely strongly unimodal. 
\end{theorem}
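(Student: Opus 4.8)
The plan is to argue by contradiction, reducing the general case to the semicircle case already settled in Lemma \ref{NSU} by means of the free central limit theorem. Assume that $\lambda$ is freely strongly unimodal. Translating $\lambda$ does not affect this property (a shift of a unimodal measure is unimodal, and $\delta_c\boxplus(\lambda\boxplus\nu)=(\delta_c\boxplus\lambda)\boxplus\nu$), so I may assume $\lambda$ has mean $0$; since $\lambda$ is not a delta measure its variance $\sigma^2$ is strictly positive. The goal is to build from $\lambda$ a sequence of freely strongly unimodal measures converging weakly to $S(0,\sigma^2)$, and then show that free strong unimodality survives this limit, contradicting Lemma \ref{NSU}.

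The first ingredient is that free strong unimodality is stable under the operations used to form the free CLT array. First, it is preserved by free convolution: if $\lambda_1,\lambda_2$ are freely strongly unimodal and $\nu$ is unimodal, then $\lambda_2\boxplus\nu$ is unimodal, hence $(\lambda_1\boxplus\lambda_2)\boxplus\nu=\lambda_1\boxplus(\lambda_2\boxplus\nu)$ is unimodal by applying the definition to $\lambda_1$; by induction $\lambda^{\boxplus n}$ is freely strongly unimodal for every $n$. Second, writing $D_c\mu$ for the pushforward of $\mu$ under $x\mapsto cx$ with $c>0$, one has $D_c\mu\boxplus\nu=D_c\!\left(\mu\boxplus D_{1/c}\nu\right)$, and since dilation preserves unimodality it follows that $D_c\lambda$ is freely strongly unimodal whenever $\lambda$ is. Consequently each normalized measure $\lambda_n:=D_{1/\sqrt n}\!\left(\lambda^{\boxplus n}\right)$ is freely strongly unimodal.

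The second, and main, ingredient is that free strong unimodality is closed under weak convergence. Suppose $\mu_n\to\mu$ weakly with each $\mu_n$ freely strongly unimodal and $\mu$ a probability measure, and let $\nu$ be unimodal. Then every $\mu_n\boxplus\nu$ is unimodal, and by the weak continuity of free convolution (Bercovici--Voiculescu) $\mu_n\boxplus\nu\to\mu\boxplus\nu$ weakly. I would then invoke the fact that a weak limit of unimodal probability measures is again unimodal, to conclude that $\mu\boxplus\nu$ is unimodal and hence that $\mu$ is freely strongly unimodal. This last fact is where the real work lies: a distribution is unimodal exactly when its distribution function is convex on $(-\infty,a)$ and concave on $(a,\infty)$ for some mode $a$, and convexity/concavity pass to weak limits at continuity points; the only delicate point is to rule out the modes $a_n$ escaping to $\pm\infty$, which I would handle by observing that otherwise the limiting distribution function would be globally convex (or concave) and bounded, hence constant, contradicting that $\mu$ is a probability measure.

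Finally I would assemble these pieces. By the free central limit theorem, $\lambda_n=D_{1/\sqrt n}\!\left(\lambda^{\boxplus n}\right)\to S(0,\sigma^2)$ weakly. Each $\lambda_n$ is freely strongly unimodal by the first ingredient, so by the second ingredient the limit $S(0,\sigma^2)$ is freely strongly unimodal, contradicting Lemma \ref{NSU}. Therefore $\lambda$ cannot be freely strongly unimodal. The main obstacle is the closure under weak limits, namely the combination of weak continuity of $\boxplus$ with the preservation of unimodality and the boundedness of the modes under weak convergence; the remaining steps are short formal manipulations once those are in place.
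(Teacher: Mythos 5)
Your proof is correct and follows essentially the same route as the paper's: both reduce to Lemma \ref{NSU} via the free central limit theorem, using the induction/dilation stability of free strong unimodality, the weak continuity of $\boxplus$, and the weak closedness of the set of unimodal distributions. The only difference is organizational: you package the limiting step as weak closedness of the whole class of freely strongly unimodal measures, whereas the paper applies the hypothesis directly to the single dilated test measure $\DD_{\sqrt{nv}}(\mu)$ coming from Lemma \ref{NSU} and passes to the limit $\mu\boxplus S(0,1)$.
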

\begin{proof} We may assume that $\lambda$ has mean 0. Suppose that $\lambda$ is freely strongly unimodal. A simple induction argument shows that $\lambda^{\boxplus n}$ is freely strongly unimodal for all $n\in\N$.  We derive a contradiction below. By Lemma \ref{NSU} we can take a unimodal  probability measure $\mu$ such that $\mu\boxplus S(0,1)$ is not unimodal. Our hypothesis shows that the measure $\DD_{\sqrt{n v}}(\mu)\boxplus\lambda^{\boxplus n}$ is unimodal, where $v$ is the variance of $\lambda$  and $\DD_c(\rho)$ is the push-forward of a measure $\rho$ by the map $x\mapsto c x$ for $c\in \R$. By the free central limit theorem, the unimodal distributions $\DD_{1/\sqrt{nv}}(\DD_{\sqrt{n v}}(\mu)\boxplus\lambda^{\boxplus n}) = \mu\boxplus \DD_{1/\sqrt{nv}}(\lambda^{\boxplus n})$ weakly converge to $\mu\boxplus S(0,1)$ as $n \to \infty$. Since the set of unimodal distributions is weakly closed, we conclude that $\mu\boxplus S(0,1)$ is unimodal, a contradiction. 
\end{proof}

\begin{problem}
Does there exist a freely strongly unimodal probability measure  that is not a delta measure? 
\end{problem}



\section{Classical Brownian motion with initial distribution}\label{sec:CBM}
This section discusses large time unimodality for {\it standard Brownian motion with an initial distribution $\mu$}, which is a L\'evy process with the distribution $\mu* N(0,t)$ at time $t \geq0$. 
Before going to a general case, one example will be helpful in understanding unimodality for $\mu \ast N(0,t)$. 

\begin{example}
Elementary calculus shows that 
\begin{equation}
\frac{1}{2}(\delta_{-1} + \delta_1) \ast N(0,t) 
\end{equation}
is unimodal if and only if $t\geq 1$; see Figures \ref{CBM0.25}-\ref{CBM4}. 
\end{example}

\begin{figure}[h]
\begin{center}
\begin{minipage}{0.3\hsize}
\begin{center}
\includegraphics[width=40mm,clip]{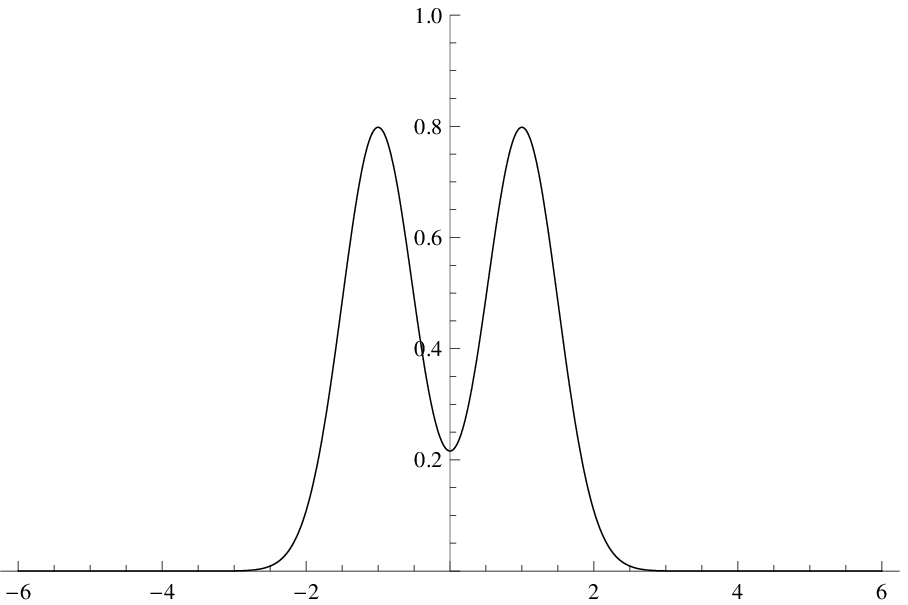}
\caption{$t=0.25$} \label{CBM0.25}
\end{center}
  \end{minipage}
\begin{minipage}{0.3\hsize}
\begin{center}
\includegraphics[width=40mm,clip]{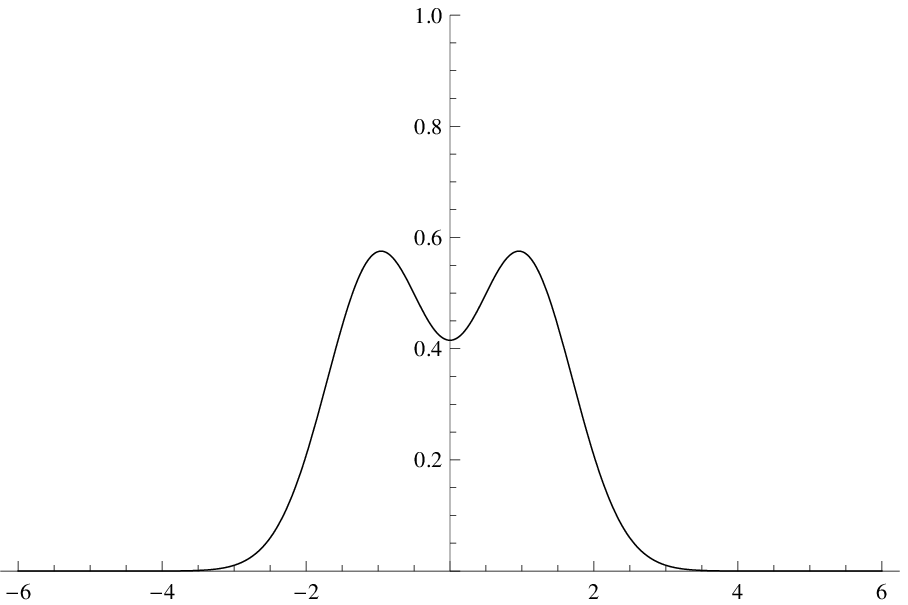}
\caption{$t=0.5$} 
\end{center}
\end{minipage}
\begin{minipage}{0.3\hsize}
\begin{center}
\includegraphics[width=40mm,clip]{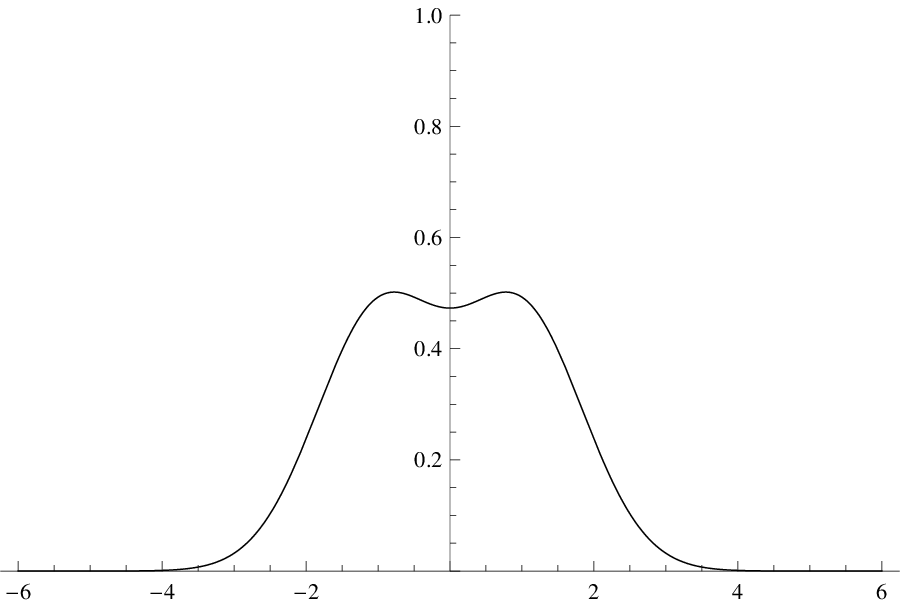}
\caption{$t=0.75$} 
\end{center}
\end{minipage}
\begin{minipage}{0.3\hsize}
\begin{center}
\includegraphics[width=40mm,clip]{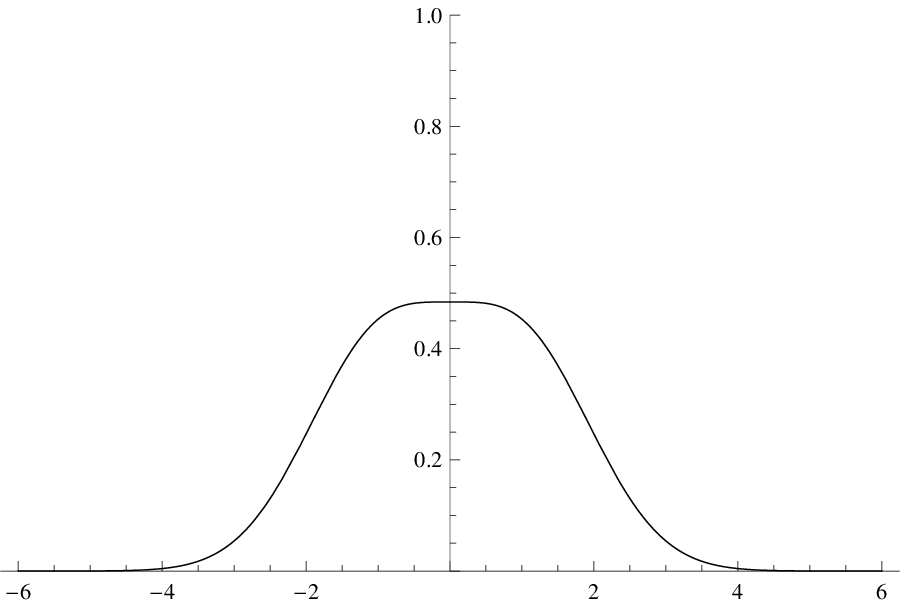}
\caption{$t=1$}
\end{center}
\end{minipage}
\begin{minipage}{0.3\hsize}
\begin{center}
\includegraphics[width=40mm,clip]{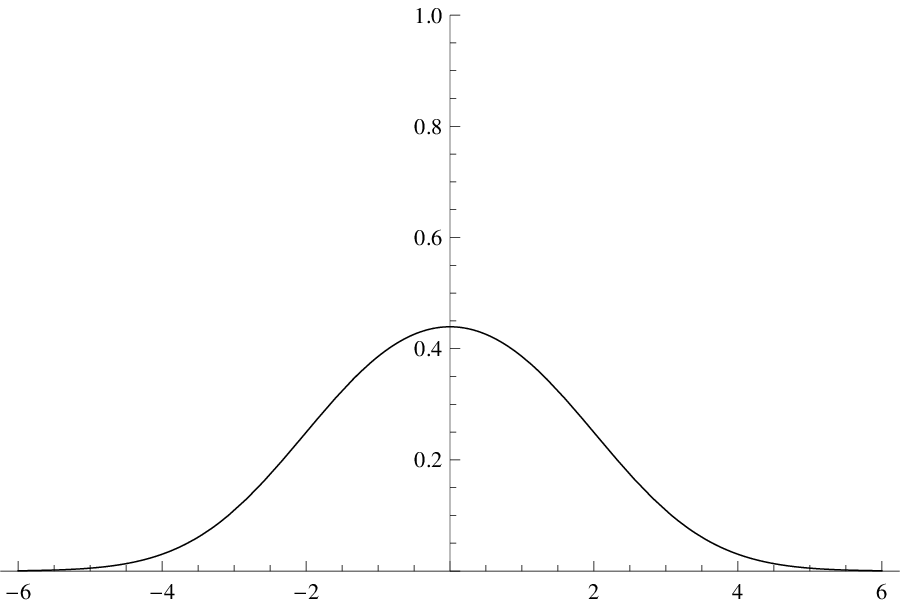}
\caption{$t=2$}
\end{center}
\end{minipage}
\begin{minipage}{0.3\hsize}
\begin{center}
\includegraphics[width=40mm,clip]{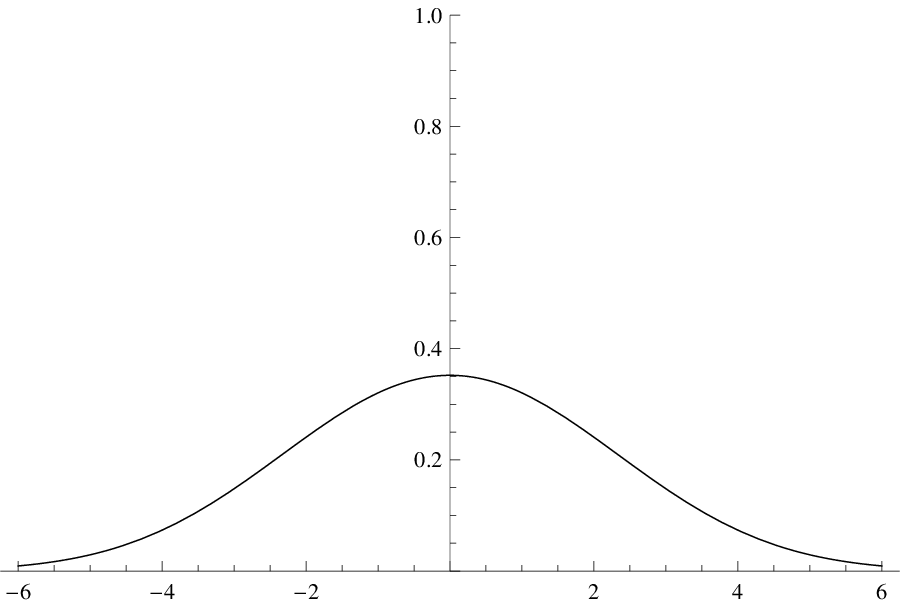}
\caption{$t=4$}\label{CBM4}
\end{center}
\end{minipage}
\end{center}
\end{figure}

This simple example suggests that Brownian motion becomes unimodal for sufficiently large time, possibly under some condition on the initial distribution.  In some sense, we give an almost optimal condition on the initial distribution for the large time unimodality to hold. 
We start from providing an example of initial distribution with which the Brownian motion never becomes unimodal. 
\begin{proposition}\label{c-notunimodal} 
There exists a probability measure $\mu$ on $\mathbb{R}$ such that $\mu * N(0,t)$ is not unimodal at any $t>0$, and 
\begin{equation}\label{c-0}
\int_\mathbb{R} e^{A |x|^p} d \mu(x)<\infty
\end{equation}
for all $A >0$ and $0<p <2$. 
\end{proposition}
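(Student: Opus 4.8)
The plan is to take a purely atomic initial law $\mu=\sum_{n\ge 1}w_n\delta_{a_n}$ with $a_n\uparrow\infty$ and increasingly large gaps $d_n:=a_{n+1}-a_n$, in the same spirit as Proposition \ref{notunimodal1}, and to exploit the fact that the Gaussian kernel strongly localizes the mass of $\mu * N(0,t)$ around the atoms. Writing $p_t$ for the density,
\begin{equation}
p_t(x)=\frac{1}{\sqrt{2\pi t}}\sum_{n\ge 1}w_n\,e^{-(x-a_n)^2/(2t)},
\end{equation}
which is everywhere positive, real analytic, and vanishes at $\pm\infty$. By the very definition of unimodality a density cannot possess a strict local minimum at an interior point (a one-line argument from continuity and the monotonicity on each side of the mode), so it suffices to produce, for \emph{every} $t>0$, one such interior local minimum.

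The key estimate is at the midpoints $b_n:=(a_n+a_{n+1})/2$. Since every atom lies outside the open interval $(a_n,a_{n+1})$, we have $|b_n-a_m|\ge d_n/2$ for all $m$, whence
\begin{equation}
p_t(b_n)\le \frac{1}{\sqrt{2\pi t}}\,e^{-d_n^2/(8t)}\sum_{m\ge1}w_m=\frac{1}{\sqrt{2\pi t}}\,e^{-d_n^2/(8t)},
\end{equation}
while trivially $p_t(a_n)\ge w_n/\sqrt{2\pi t}$ and $p_t(a_{n+1})\ge w_{n+1}/\sqrt{2\pi t}$. Therefore, whenever $e^{-d_n^2/(8t)}<\min\{w_n,w_{n+1}\}$, the value $p_t(b_n)$ is strictly below both $p_t(a_n)$ and $p_t(a_{n+1})$, so the minimum of $p_t$ over $[a_n,a_{n+1}]$ is attained at an interior point; as $p_t$ is real analytic and non-constant this is a strict local minimum, and $\mu * N(0,t)$ is not unimodal. (Alternatively one may invoke Lemma \ref{lem basic unimodal} after locating the two flanking local maxima forced by $p_t\to0$ at $\pm\infty$.) Thus it is enough to arrange that for every $t>0$ the inequality $d_n^2>8t\log(1/w_{n+1})$ holds for at least one, indeed all large, $n$.

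It remains to choose $a_n$ and $w_n$ meeting simultaneously this spreading condition and the tail bound $\sum_n w_n e^{A a_n^p}<\infty$ for all $A>0$, $0<p<2$. Taking logarithms, the tail demands that $\log(1/w_n)$ dominate $a_n^p$ for every $p<2$, whereas the spreading condition demands that $d_n^2$ dominate $\log(1/w_{n+1})$; the tension is that fast-growing gaps tend to produce heavy tails. I would resolve it by letting $a_n$ grow geometrically, e.g. $a_n=2^n$ so that $d_n=a_n$, and setting $w_n=c\,e^{-4^n/n}$ with $c$ a normalizing constant. Then $\log(1/w_{n+1})\sim 4^{n+1}/(n+1)$ while $d_n^2/(8t)=4^n/(8t)$, and the ratio $\tfrac{d_n^2}{8t\log(1/w_{n+1})}\sim\tfrac{n+1}{32t}\to\infty$, so the spreading condition holds for all large $n$ and every $t$; meanwhile $w_n e^{A a_n^p}=c\,e^{-4^n/n+A\,2^{pn}}\to0$ summably because $2-p>0$ makes $4^n/n$ overwhelm $2^{pn}$. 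The same choice gives $\int e^{Ax^2}\,d\mu=\infty$ for all $A$, which is exactly what is needed for the sharper Theorem \ref{intro2}.

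The main obstacle is precisely this balancing act between gaps and tails. Unlike the free case of Proposition \ref{notunimodal1}, where the polynomially decaying kernel makes non-unimodality a statement about the number of connected components of the support and hence insensitive to the weights, the Gaussian kernel forces the weights into the non-unimodality estimate. Threading $\log(1/w_n)$ between $a_n^p$ (for all $p<2$) and $a_n^2$ is what pins the critical exponent at $p=2$ and renders the tail condition essentially sharp.
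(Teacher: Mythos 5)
Your proof is correct, and the measure you build is essentially the paper's: atoms at geometric positions with weights of the form $e^{-\text{const}\cdot a_n^2/n}$, chosen precisely so that $\log(1/w_n)$ outgrows $a_n^p$ for every $p<2$ (giving the tail bound) while still being dominated, for each fixed $t$, by the Gaussian gap scale $d_n^2/t$ (giving non-unimodality at every time); the paper takes $a_k=a^k$, $a\ge 2$, and $w_k=Me^{-\delta b_k^2/k}$, which is your choice up to constants. Where you genuinely differ is the device for detecting non-unimodality. The paper argues through the derivative: it shows $f_t'(a_k-1)>0$ for all large $k$, bounding the contribution of the far atoms via the fact that $|u|e^{-u^2/(2t)}$ is maximized at $|u|=\sqrt t$ (this is its condition \eqref{c-1}), and then uses that a unimodal density cannot be strictly increasing at points arbitrarily far to the right. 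You instead compare values at three points: $p_t$ at the midpoint of a gap is at most $e^{-d_n^2/(8t)}/\sqrt{2\pi t}$, while at the flanking atoms it is at least $w_n/\sqrt{2\pi t}$ and $w_{n+1}/\sqrt{2\pi t}$, and a configuration $p_t(a_n)>p_t(x_0)<p_t(a_{n+1})$ contradicts unimodality directly (or via Lemma \ref{lem basic unimodal}, as you note). Your route is more elementary -- no differentiation, no auxiliary maximization -- and gives the cleaner quantitative criterion $d_n^2>8t\log(1/w_{n+1})$; the paper's derivative method has the advantage that it transfers verbatim to the heavier-tailed kernels of its later results (Propositions \ref{c-Cauchy} and \ref{Levy}), where the same evaluation of $f_t'$ at a point near an atom is reused. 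Both arguments are complete and quantitatively equivalent here.
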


\begin{proof}
We consider sequences $\{w_n\}_{n\in\mathbb{N}} \subset (0,\infty)$ and  $\{a_n\}_{n\in\mathbb{N}} \subset \mathbb{R}$ such that 
\begin{align}
&\sum_{n=1}^\infty w_n=1, \label{c-01} \\
&\displaystyle b_k:=\inf_{n\in\N\setminus\{k\}} |a_k-a_n-1|\rightarrow \infty \text{~as~} k\rightarrow \infty. \label{c-02}
\end{align}
Moreover we assume that for all $t>0$ there exists $k_0=k_0(t)\in\mathbb{N}$ such that 
\begin{equation}\label{c-1}
k\ge k_0(t) \Rightarrow w_ke^{-\frac{1}{2t}}>b_ke^{-\frac{b_k^2}{2t}}.
\end{equation}
Define a probability measure $\mu$ by setting $\mu:=\sum_{n=1}^\infty w_n \delta_{a_n}$ and the following function:
\begin{equation}
f_t(x):=\sqrt{2\pi t} \cdot \frac{d(\mu*N(0,t))}{dx}(x)=\sum_{n=1}^\infty w_n e^{-\frac{(x-a_n)^2}{2t}}, \hspace{2mm} x\in \mathbb{R}.
\end{equation}
Then we have
\begin{equation}
f_t'(x)=\sum_{n=1}^\infty w_n \Bigl( -\frac{x-a_n}{t}\Bigr) e^{-\frac{(x-a_n)^2}{2t}}, \hspace{2mm} x\in \mathbb{R}.
\end{equation}
If needed we may replace $k_0(t)$ by a larger integer so that $b_k > \sqrt{t}$ holds for all $k \geq k_0(t)$. For $k \geq k_0(t)$ we have 
\begin{equation}
\begin{split}
f_t'(a_k-1) &= \frac{w_k}{t} e^{-\frac{1}{2t}} - \sum_{n\in\mathbb{N},n\neq k} w_n \Bigl( \frac{a_k-a_n-1}{t}\Bigr)e^{-\frac{(a_k-a_n-1)^2}{2t}}\\
              &\ge \frac{w_k}{t} e^{-\frac{1}{2t}} - \sum_{n\in\mathbb{N},n\neq k} w_n \frac{|a_k-a_n-1|}{t} e^{-\frac{(a_k-a_n-1)^2}{2t}}\\
              &\ge \frac{w_k}{t} e^{-\frac{1}{2t}} - \sum_{n\in\mathbb{N},n\neq k} w_n \frac{b_k}{t} e^{-\frac{b_k^2}{2t}}\\
              &\ge \frac{1}{t}\Bigl(w_k e^{-\frac{1}{2t}} - b_k e^{-\frac{b_k^2}{2t}}\Bigr) > 0, 
\end{split}
\end{equation}
where we used the fact that $x\mapsto |x| e^{-\frac{x^2}{2t}}$ takes the global maximum at $x=\pm\sqrt{t}$ on the third inequality and the assumption \eqref{c-1} on the last line. 
This implies that $\mu*N(0,t)$ is not unimodal for any $t>0$. 

Next we take specific sequences $\{w_n\}_n$ and $\{a_n\}_n$ satisfying the conditions \eqref{c-01}--\eqref{c-1}. Set $a_k=a^k$, $k\in\mathbb{N}$ where $a\geq2$. Note that there exists some constant $c>0$ such that $b_k\ge ca^k$ for all $k\in\mathbb{N}$. Hence we have that $b_k \rightarrow \infty$ as $k\rightarrow \infty$. Moreover we set
\begin{equation}
w_k:=Me^{-\frac{\delta b_k^2}{k}},
\end{equation}
where $\delta>0$ and $M>0$ is a normalized constant, that is, $M=(\sum_{k=1}^\infty e^{-\frac{\delta b_k^2}{k}})^{-1}$ (note that the series converges). Since $b_k\rightarrow \infty$ as $k\rightarrow \infty$, we have
\begin{equation}
\frac{b_ke^{-\frac{b_k^2}{2t}}}{w_k}=\frac{1}{M}b_ke^{\frac{\delta b_k^2}{k}-\frac{b_k^2}{2t}}\rightarrow 0, \hspace{2mm} \text{as } k\rightarrow \infty.
\end{equation}
For all $t>0$ there exists $k_0=k_0(t)\in\mathbb{N}$ such that $k\ge k_0$ implies that $e^{-\frac{1}{2t}}>b_ke^{-\frac{b_k^2}{2t}}/w_k$. Therefore the sequences $\{w_n\}_n$ and $\{a_n\}_n$ satisfy the condition \eqref{c-1}.
Finally we show that $\mu=\sum_{n=1}^\infty w_n \delta_{a_n}$ has the property \eqref{c-0}. For every $A>0$ and $0<p<2$, using the inequality $b_k \geq c a^k$ shows that
\begin{equation}
\begin{split}
\int_\mathbb{R} e^{A |x|^p} \,d\mu(x) &= \sum_{k=1}^\infty w_k e^{A |a_k|^p}= M\sum_{k=1}^\infty e^{-\delta k^{-1}b_k^2}e^{A a^{kp}}\\
                 &\le M\sum_{k=1}^\infty e^{-A a^{pk}(\delta c^2 A^{-1}k^{-1} a^{(2-p)k}-1)} < \infty.
\end{split}
\end{equation}
Thus the proof is complete.
\end{proof}

Note that for any sequences $\{w_n\}_n$ and $\{a_n\}_n$ satisfying the conditions \eqref{c-01}--\eqref{c-1} the distribution $\mu=\sum_{n=1}^\infty w_n \delta_{a_n}$ has the following property:
\begin{equation}\label{c-2}
\int_\mathbb{R} e^{\epsilon x^2} \,d\mu (x) =\infty \text{~~for all~~} \epsilon>0. 
\end{equation}
Then we have a natural question whether the large time unimodality of $\mu \ast N(0,t)$ holds or not if the initial distribution $\mu$ does not satisfy the condition \eqref{c-2}. We solve this question as  follows.

\begin{theorem}\label{c-unimodal} 
Let $\mu$ be a probability measure on $\mathbb{R}$ such that
\begin{equation}\label{exp-moment}
\alpha:=\int_{\mathbb{R}} e^{\epsilon x^2} \,d\mu(x)<\infty .
\end{equation}
for some $\epsilon>0$. Then $\mu \ast N(0,t)$ is unimodal for $t\ge\frac{36\log(2\alpha)}{\epsilon}$.
\end{theorem}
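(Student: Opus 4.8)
The plan is to reduce the problem to a one–variable calculus statement about the smoothed density and then to an estimate on a ``tilted variance.'' Write
\[
f_t(x):=\int_\R e^{-\frac{(x-y)^2}{2t}}\,d\mu(y)=\sqrt{2\pi t}\,\frac{d(\mu\ast N(0,t))}{dx}(x),
\]
which is strictly positive, real analytic, and tends to $0$ as $|x|\to\infty$; thus $\mu\ast N(0,t)$ is unimodal iff $f_t$ is. Since $f_t$ is analytic its critical points are isolated, and it suffices to prove that $f_t''(x_0)<0$ at every critical point $x_0$: then each zero of $f_t'$ is a strict local maximum, and (because $f_t'>0$ near $-\infty$ and $f_t'<0$ near $+\infty$) two such maxima would force an intermediate zero of $f_t'$ with $f_t''\ge 0$, a contradiction, so $f_t'$ has a single sign change and $f_t$ is unimodal. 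Differentiating under the integral sign, at a critical point the condition $f_t'(x_0)=0$ reads $\int_\R(x_0-y)e^{-(x_0-y)^2/2t}\,d\mu(y)=0$, i.e. $x_0$ is the mean of $y$ under the tilted probability measure $d\nu(y)\propto e^{-(x_0-y)^2/2t}\,d\mu(y)$, while $t^2 f_t''(x_0)=\int_\R\big((x_0-y)^2-t\big)e^{-(x_0-y)^2/2t}\,d\mu(y)$. Hence the whole theorem reduces to the single inequality
\[
\int_\R (x_0-y)^2\,e^{-\frac{(x_0-y)^2}{2t}}\,d\mu(y)<t\int_\R e^{-\frac{(x_0-y)^2}{2t}}\,d\mu(y),
\]
i.e. $\mathrm{Var}_\nu(y)<t$, to be proved at every critical point $x_0$.

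To prove this I would first record that the hypothesis \eqref{exp-moment} gives, by Markov's inequality, $\mu(\{|y|>L\})\le\alpha e^{-\epsilon L^2}$; choosing $L_0:=\sqrt{\log(2\alpha)/\epsilon}$ yields $\mu([-L_0,L_0])\ge\frac12$, and the hypothesis $t\ge 36\log(2\alpha)/\epsilon$ is exactly what makes $L_0\le\sqrt t/6$. The next, and crucial, step is to confine the critical points: writing $g(x):=-t f_t'(x)=\int_\R(x-y)e^{-(x-y)^2/2t}\,d\mu(y)$ and noting that only the region $\{y>x\}$ contributes negatively to $g(x)$, one bounds, for $x\ge \sqrt t/3$, the positive ``bulk drift'' $\int_{[-L_0,L_0]}(x-y)e^{-(x-y)^2/2t}\,d\mu$ from below by $\tfrac12(x-L_0)e^{-(x-L_0)^2/2t}$ and the negative tail $\int_{\{y>x\}}(y-x)e^{-(y-x)^2/2t}\,d\mu$ from above by $\sqrt t\,e^{-1/2}\alpha e^{-\epsilon x^2}$. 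Since $t$ is large we have $\tfrac1{2t}<\epsilon$, so the gaussian-rate bulk term beats the faster-decaying tail term and $g(x)>0$ for all $x\ge\sqrt t/3$; symmetrically $g(x)<0$ for $x\le-\sqrt t/3$. Consequently every critical point satisfies $|x_0|<\sqrt t/3$.

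With $|x_0|<\sqrt t/3$ in hand, the target inequality follows from a routine region split of $\int_\R\big((x_0-y)^2-t\big)e^{-(x_0-y)^2/2t}\,d\mu$ according to $w:=x_0-y$. On $|w|\le\sqrt t/2$ the integrand is $\le 0$, and since $[-L_0,L_0]\subset\{|w|\le\sqrt t/2\}$ (because $L_0+|x_0|<\sqrt t/6+\sqrt t/3=\sqrt t/2$) this region contributes at most $-\tfrac38\,t\,e^{-1/8}$. On $|w|>\sqrt t$ one has $|y|\ge\tfrac23\sqrt t\gg L_0$, so the positive contribution is controlled by $\int_{|y|\ge\frac23\sqrt t}(2y^2+2x_0^2)\,d\mu$, which, using $y^2\le(\tfrac49 t)e^{-\frac49\epsilon t}e^{\epsilon y^2}$ on that range, is at most a constant times $t\,\alpha\,(2\alpha)^{-16}$. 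As this is negligible compared with $\tfrac38 t e^{-1/8}$, the whole integral is negative, giving $f_t''(x_0)<0$.

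The main obstacle is the middle step, the confinement $|x_0|<\sqrt t/3$: it is here that the gaussian-rate decay of the bulk drift must be weighed against the genuinely faster $e^{-\epsilon x^2}$ decay of the far tail, and a careful logarithmic comparison shows this works precisely when $L_0\le\sqrt t/6$, i.e. when $t\ge 36\log(2\alpha)/\epsilon$; in other words the constant $36$ is consumed entirely in guaranteeing this separation. By contrast, once confinement is known the final region split has enormous room to spare (the error terms are of size $(2\alpha)^{-16}$), which is consistent with the constant $36$ being far from optimal.
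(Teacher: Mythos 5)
Your proposal is correct and follows essentially the same route as the paper's proof: both start from the Markov-type tail bound $\mu(\{|y|>x\})\le\alpha e^{-\epsilon x^2}$, then show $f_t'$ has a definite sign outside a central window of width of order $\sqrt{t}$ (the paper uses $|x|\ge\sqrt{t}/2$, you use $|x|\ge\sqrt{t}/3$) via a bulk-versus-tail split of the integral, and finally show $f_t''<0$ throughout the central window by a second bulk-versus-tail split -- indeed your ``critical point / tilted variance'' packaging is cosmetic, since your last estimate never uses the equation $f_t'(x_0)=0$ and actually proves $f_t''<0$ on all of $|x|<\sqrt{t}/3$, exactly as the paper does on $|x|<\sqrt{t}/2$. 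One repairable slip: the bulk-drift lower bound $\tfrac12 (x-L_0)e^{-(x-L_0)^2/2t}$ is false once $x-L_0>\sqrt{t}$, because $u\mapsto u e^{-u^2/2t}$ is then decreasing on $[x-L_0,x+L_0]$ and the minimum sits at the right endpoint; replacing the bound by $\tfrac12\min\bigl\{(x-L_0)e^{-(x-L_0)^2/2t},\,(x+L_0)e^{-(x+L_0)^2/2t}\bigr\}$ fixes this, and your comparison with the tail term $\sqrt{t}\,e^{-1/2}\alpha e^{-\epsilon x^2}$ still goes through since $(x+L_0)^2/2t$ remains a small fraction of $\epsilon x^2$ under $t\ge 36\log(2\alpha)/\epsilon$.
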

\begin{remark}
The proof becomes much easier if we assume that $\mu$ has a compact support. 
\end{remark}
\begin{proof}
For $x>0$ we have
\begin{equation}\label{Markov ineq}
\mu(|y|>x)=\int_{|y|>x}1 \,d\mu(y) \le \int_{|y|>x} \frac{e^{\epsilon y^2}}{e^{\epsilon x^2}} \,d\mu(y) \le \alpha e^{-\epsilon x^2}.
\end{equation}
Let
\begin{equation}
f_t(x):=\sqrt{2\pi t}\cdot\frac{d(\mu\ast N(0,t))}{dx}(x)=\int_\mathbb{R}e^{-\frac{(x-y)^2}{2t}}\,d\mu(y).
\end{equation}
Then we have
\begin{equation}
\begin{split}
f_t'(x)&=\int_\mathbb{R}\Bigl(\frac{y-x}{t}\Bigr)e^{-\frac{(y-x)^2}{2t}}\,d\mu(y)\\
&=\int_{x}^{\infty} \frac{y-x}{t}e^{-\frac{(y-x)^2}{2t}}\,d\mu(y)-\int_{-\infty}^{x}\frac{x-y}{t}e^{-\frac{(x-y)^2}{2t}}\,d\mu(y).
\end{split}
\end{equation}
For $x>0$, we have
\begin{equation}
\int_{x}^{\infty} \frac{y-x}{t}e^{-\frac{(y-x)^2}{2t}}\,d\mu(y)\le \frac{\sqrt{t}}{t}e^{-\frac{1}{2}}\mu((x,\infty))\le \frac{\alpha}{\sqrt{t}}e^{-\frac{1}{2}-\epsilon x^2}.
\end{equation}
For $x\ge \frac{\sqrt{t}}{2}$, we have
\begin{equation}
\begin{split}
\int_{-\infty}^{x}\frac{x-y}{t}e^{-\frac{(x-y)^2}{2t}}\,d\mu(y) &\ge \int_{-3x}^{x-\frac{\sqrt{t}}{3}} \frac{x-y}{t} e^{-\frac{(x-y)^2}{2t}}\,d\mu(y)\\
&\ge \frac{1}{t}\min\Bigl\{\frac{\sqrt{t}}{3}e^{-\frac{1}{18}}, 4xe^{-\frac{8x^2}{t}}\Bigr\} \mu\Biggl( \Bigl[-3x,x-\frac{\sqrt{t}}{3}\Bigr]\Biggr).
\end{split}
\end{equation}
Now the function $g(x):=4xe^{-\frac{8x^2}{t}}$ has a local maximum at $x=\pm\frac{\sqrt{t}}{4}$. For all $x\ge \frac{\sqrt{t}}{2}$,
\begin{equation}
g(x) \le 2e^{-2}\sqrt{t} < \frac{1}{3}e^{-\frac{1}{18}}\sqrt{t},
\end{equation}
where $2e^{-2} \approx 0.2706$ and $\frac{1}{3}e^{-\frac{1}{18}} \approx 0.3153$. Hence we have
\begin{equation}
\begin{split}
\int_{-\infty}^{x}\frac{x-y}{t}e^{-\frac{(x-y)^2}{2t}}\,d\mu(y)
&\ge \frac{4x}{t}e^{-\frac{8x^2}{t}} \mu\Biggl( \Bigl[-3x,x-\frac{\sqrt{t}}{3}\Bigr]\Biggr)\\
&\ge \frac{4x}{t}e^{-\frac{8x^2}{t}}  \mu\Biggl( \Bigl[-\frac{\sqrt{t}}{6},\frac{\sqrt{t}}{6}\Bigr]\Biggr)\\
&\ge \frac{2}{\sqrt{t}}e^{-\frac{8x^2}{t}}(1-\alpha e^{-\frac{\epsilon t}{36}}), 
\end{split}
\end{equation}
where the last inequality holds thanks to \eqref{Markov ineq} and $x\ge \frac{\sqrt{t}}{2}$. Therefore if $x\ge \frac{\sqrt{t}}{2}$ then
\begin{equation}
\begin{split}
f_t'(x)&\le \frac{\alpha}{\sqrt{t}}e^{-\frac{1}{2}-\epsilon x^2} -\frac{2}{\sqrt{t}}e^{-\frac{8x^2}{t}}(1-\alpha e^{-\frac{\epsilon t}{36}}) \\
&= \frac{\alpha}{\sqrt{t}}e^{-\frac{1}{2}-\epsilon x^2} \Biggl\{ 1-\frac{2e^{\frac{1}{2}}}{\alpha}e^{\epsilon x^2- \frac{8x^2}{t}}(1-\alpha e^{-\frac{\epsilon t}{36}}) \Biggr\}.
\end{split}
\end{equation}
If $t\ge \frac{16}{\epsilon}$ then $e^{\epsilon x^2- \frac{8x^2}{t}} \ge e^{\frac{1}{2}\epsilon x^2} \ge e^{\frac{\epsilon t}{8}}$. Hence if $t \ge \frac{16}{\epsilon}$ then
\begin{equation}
f_t'(x) \le  \frac{\alpha}{\sqrt{t}}e^{-\frac{1}{2}-\epsilon x^2} \Biggl\{ 1-\frac{2e^{\frac{1}{2}}}{\alpha}e^{\frac{\epsilon t}{8}}  (1-\alpha e^{-\frac{\epsilon t}{36}}) \Biggr\}.
\end{equation}
If $t\ge \frac{36\log(2\alpha)}{\epsilon}$ then $1-\alpha e^{-\frac{\epsilon t}{36}} \ge \frac{1}{2}$ and  we have
\begin{equation}
1-\frac{2e^{\frac{1}{2}}}{\alpha}e^{\frac{\epsilon t}{8}} (1-\alpha e^{-\frac{\epsilon t}{36}})\le 1-e^{\frac{1}{2}}\cdot 2^{\frac{9}{2}} \cdot \alpha^{\frac{7}{2}}<0.
\end{equation}
By taking $t\ge \max \{\frac{16}{\epsilon}, \frac{36\log(2\alpha)}{\epsilon}\}=\frac{36\log(2\alpha)}{\epsilon}$, we have $f_t'(x)<0$ if $x\ge \frac{\sqrt{t}}{2}$. Then we have
\begin{equation}
f_t'(x)<0 \hspace{2mm}\text{ if } x\ge \frac{\sqrt{t}}{2}.
\end{equation}
Similarly, we have
\begin{equation}
f_t'(x)>0 \hspace{2mm}\text{ if } x\le -\frac{\sqrt{t}}{2}.
\end{equation}

Next, we will show that $f_t''(x)<0$ for all $x \in \mathbb{R}$ with $|x|<\frac{\sqrt{t}}{2}$.
We then calculate the following
\begin{equation}
\begin{split}
f_t''(x)&=\int_\mathbb{R} \frac{(x-y)^2-t}{t^2} e^{-\frac{(x-y)^2}{2t}}\,d\mu(y)\\
&=\int_{|x-y|>\sqrt{t}}\frac{(x-y)^2-t}{t^2} e^{-\frac{(x-y)^2}{2t}}\,d\mu(y)-\int_{|x-y|\le \sqrt{t}}\frac{t-(x-y)^2}{t^2} e^{-\frac{(x-y)^2}{2t}}\,d\mu(y).
\end{split}
\end{equation}
Since the function $h(u):=\frac{u^2-t}{t^2}e^{-\frac{u^2}{2t}}$ has a local maximum at $u=\pm \sqrt{3t}$, we have
\begin{equation}
\int_{|x-y|>\sqrt{t}}\frac{(x-y)^2-t}{t^2} e^{-\frac{(x-y)^2}{2t}}\,d\mu(y)\le \frac{2}{t}e^{-\frac{3}{2}}\mu([x-\sqrt{t},x+\sqrt{t}]^c).
\end{equation}
For all $x\in \mathbb{R}$ with $|x|<\frac{\sqrt{t}}{2}$, we have $[x-\sqrt{t},x+\sqrt{t}]^c \subset \bigl[-\frac{\sqrt{t}}{2},\frac{\sqrt{t}}{2}\bigr]^c$, and therefore
\begin{equation}
\int_{|x-y|>\sqrt{t}}\frac{(x-y)^2-t}{t^2} e^{-\frac{(x-y)^2}{2t}}\,d\mu(y)\le \frac{2}{t}e^{-\frac{3}{2}}\mu \Bigl(\Bigl[-\frac{\sqrt{t}}{2},\frac{\sqrt{t}}{2}\Bigr]^c \Bigr) \le  \frac{2}{t}e^{-\frac{3}{2}} \alpha e^{-\frac{\epsilon t}{4}}.
\end{equation}
Since the function $-h(u)$ is decreasing on $[0,\sqrt{3t}]$, we have
\begin{equation}
\begin{split}
\int_{|x-y|\le \sqrt{t}}\frac{t-(x-y)^2}{t^2}e^{-\frac{(x-y)^2}{2t}}\,d\mu(y) &\ge \int_{|x-y|\le \frac{2\sqrt{t}}{3}}\frac{t-(x-y)^2}{t^2}e^{-\frac{(x-y)^2}{2t}}\,d\mu(y)\\
&\ge \frac{t-\frac{4}{9}t}{t^2} e^{-\frac{\frac{4}{9}t}{2t}} \mu\Bigl(\Bigl[x-\frac{2\sqrt{t}}{3}, x+\frac{2\sqrt{t}}{3} \Bigr] \Bigr)\\
&= \frac{5}{9t}e^{-\frac{2}{9}}\mu\Bigl(\Bigl[x-\frac{2\sqrt{t}}{3}, x+\frac{2\sqrt{t}}{3} \Bigr] \Bigr)\\
&\ge  \frac{5}{9t}e^{-\frac{2}{9}} \mu\Bigl( \Bigl[-\frac{\sqrt{t}}{6},\frac{\sqrt{t}}{6}\Bigr]\Bigr)\\
&\ge  \frac{5}{9t}e^{-\frac{2}{9}} (1-\alpha e^{-\frac{\epsilon t}{36}}).
\end{split}
\end{equation}
Therefore
\begin{equation}
\begin{split}
f_t''(x)&\le  \frac{2}{t}e^{-\frac{3}{2}} \alpha e^{-\frac{\epsilon t}{4}} -\frac{5}{9t}e^{-\frac{2}{9}} (1-\alpha e^{-\frac{\epsilon t}{36}})\\
&=\frac{1}{t} \Bigl\{   2e^{-\frac{3}{2}} \alpha e^{-\frac{\epsilon t}{4}} -\frac{5}{9}e^{-\frac{2}{9}} (1-\alpha e^{-\frac{\epsilon t}{36}})  \Bigr\}.
\end{split}
\end{equation}
If $t\ge \frac{36\log(2\alpha)}{\epsilon}$, then 
\begin{equation}
2e^{-\frac{3}{2}} \alpha e^{-\frac{\epsilon t}{4}} -\frac{5}{9}e^{-\frac{2}{9}} (1-\alpha e^{-\frac{\epsilon t}{36}}) \le \frac{e^{-\frac{3}{2}}}{(2\alpha)^8}
-\frac{5}{18}e^{-\frac{2}{9}}<0,
\end{equation}
and therefore $f_t''(x)<0$ if $|x|< \frac{\sqrt{t}}{2}$. Thus, if $t\ge \frac{36\log(2\alpha)}{\epsilon}$, then we have the following properties:
\begin{equation}
\begin{split}
x\le -\frac{\sqrt{t}}{2} &\Rightarrow f_t'(x)>0,\\
x\ge \frac{\sqrt{t}}{2} &\Rightarrow f_t'(x)<0,\\
|x|<\frac{\sqrt{t}}{2} &\Rightarrow f_t''(x)<0.
\end{split}
\end{equation}
Hence $f_t(x)$ has a unique local maximum point in $\Bigl( -\frac{\sqrt{t}}{2},\frac{\sqrt{t}}{2} \Bigr)$ when $t\ge\frac{36\log(2\alpha)}{\epsilon}$. Therefore $\mu* N(0,t)$ is unimodal for $t\ge\frac{36\log(2\alpha)}{\epsilon}$.
\end{proof}

\begin{remark} 
If $\mu$ is unimodal then $\mu \ast N(0,t)$ is unimodal for all $t>0$.  This is a consequence of the strong unimodality of the normal distribution $N(0,t)$ (see Section \ref{sec SU}),  in contrast with the failure of freely strong unimodality of the semicircle distribution (see Lemma \ref{NSU}). 
\end{remark}

 We close this section by placing a problem for future research. 
 
\begin{problem} Estimate the position of the mode of classical Brownian motion with initial distributions satisfying the assumption \eqref{exp-moment}. Our proof shows that for $t \geq \frac{36}{\epsilon} \log (2\alpha)$, the mode is located in the interval $[-\sqrt{t}/2, \sqrt{t}/2]$.  How about free Brownian motion?  
\end{problem}


\section{Large time unimodality for stable processes with index $1$ and index $1/2$ with initial distributions} 
We investigate large time unimodality for stable processes with index $1$ (following Cauchy distributions) and index $1/2$ (following L\'evy distributions) with initial distributions.

\subsection{Cauchy process with initial distribution}\label{sec:Cauchy}
Let $\{C_t\}_{t\ge0}$ be the symmetric Cauchy distribution 
\begin{equation}
C_t(dx):=\frac{t}{\pi(x^2+t^2)}\cdot1_{\mathbb{R}}(x)\,dx,\hspace{2mm} x\in\mathbb{R}, \qquad C_0=\delta_0, 
\end{equation}
which forms both classical and free convolution semigroups. A {\it Cauchy process with initial distribution $\mu$} is a L\'evy process that follows the law $\mu \ast C_t$ at time $t\geq0$. 
It is known that the Cauchy distribution satisfies the identity
\begin{equation}\label{f-Cauchy}
\mu\boxplus C_t= \mu*C_t  
\end{equation}
for any $\mu$ and $t\geq0$, and so the distributions $\mu \ast C_t$ can also be realized as the laws at time $t \geq0$ of a free L\'evy process with initial distribution $\mu$. 
As in the cases of free and classical BMs, taking $\mu$ to be the symmetric Bernoulli $\frac{1}{2}(\delta_{-1} + \delta_1)$ is helpful. By calculus we can show that 
$\mu \ast C_t$ is unimodal if and only if $t\geq \sqrt{3}$; see Figures \ref{CP0.25}-\ref{CP3}. 
Thus it is again natural to expect that a Cauchy process becomes unimodal for sufficiently large time, under some condition on the initial distribution.  
We start from the following counterexample. 

\begin{figure}[h]
\begin{center}
\begin{minipage}{0.3\hsize}
\begin{center}
\includegraphics[width=40mm,clip]{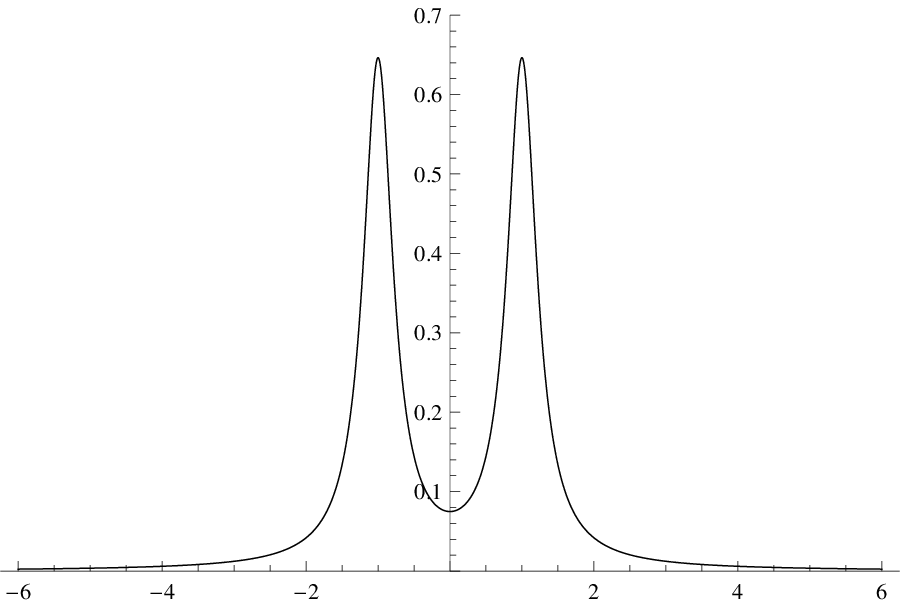}
\caption{$t=0.25$} \label{CP0.25}
\end{center}
  \end{minipage}
\begin{minipage}{0.3\hsize}
\begin{center}
\includegraphics[width=40mm,clip]{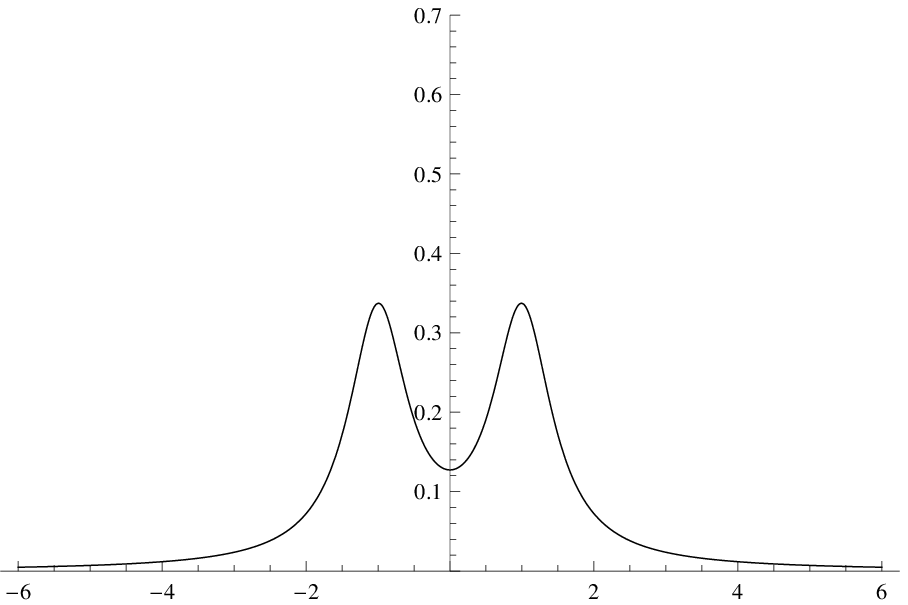}
\caption{$t=0.5$}
\end{center}
\end{minipage}
\begin{minipage}{0.3\hsize}
\begin{center}
\includegraphics[width=40mm,clip]{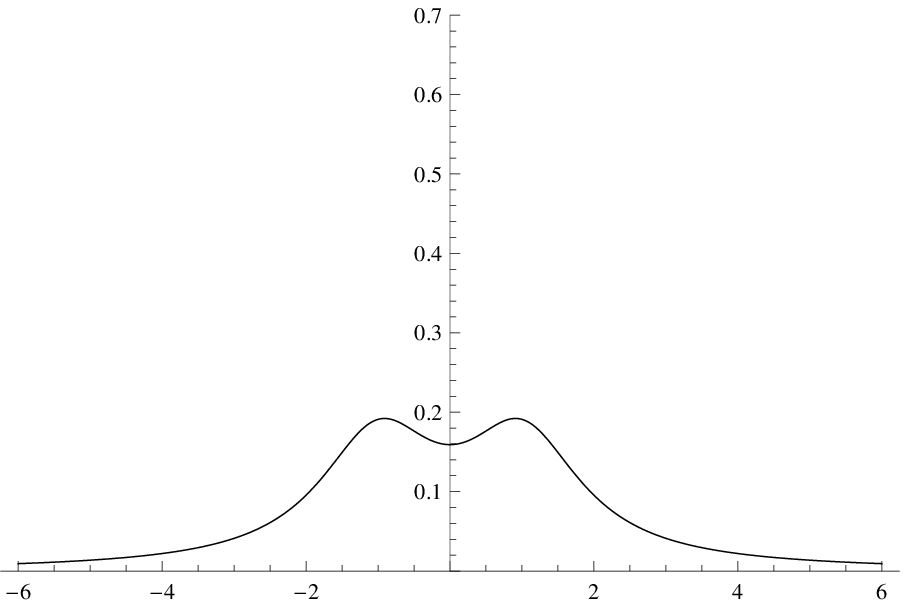}
\caption{$t=1$}
\end{center}
\end{minipage}
\begin{minipage}{0.3\hsize}
\begin{center}
\includegraphics[width=40mm,clip]{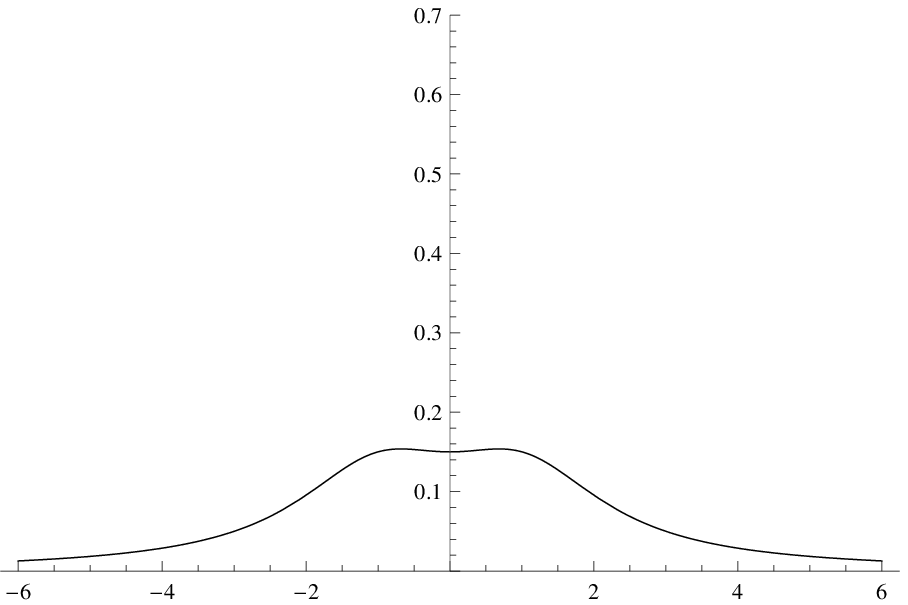}
\caption{$t=\sqrt{2}$}
\end{center}
  \end{minipage}
\begin{minipage}{0.3\hsize}
\begin{center}
\includegraphics[width=40mm,clip]{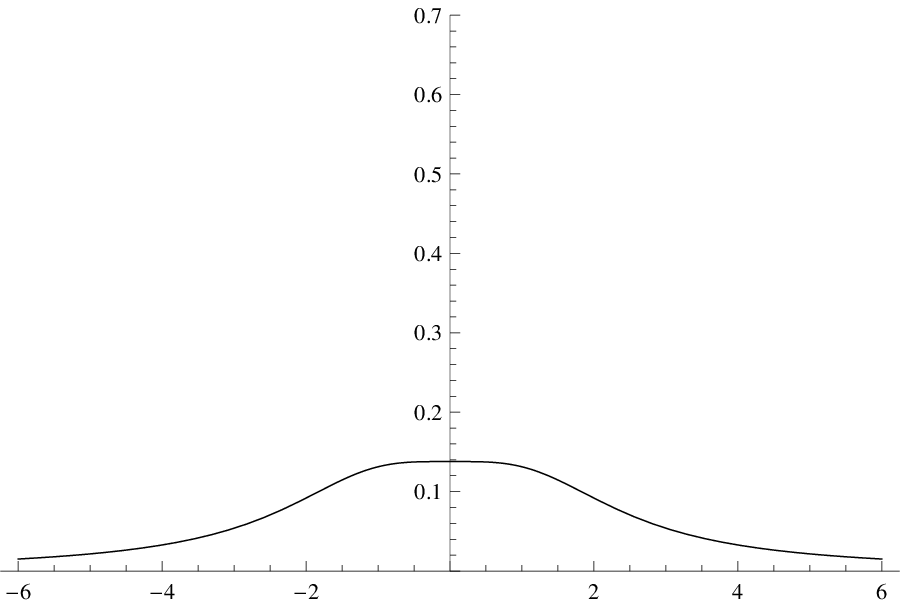}
\caption{$t=\sqrt{3}$}
\end{center}
  \end{minipage}
\begin{minipage}{0.3\hsize}
\begin{center}
\includegraphics[width=40mm,clip]{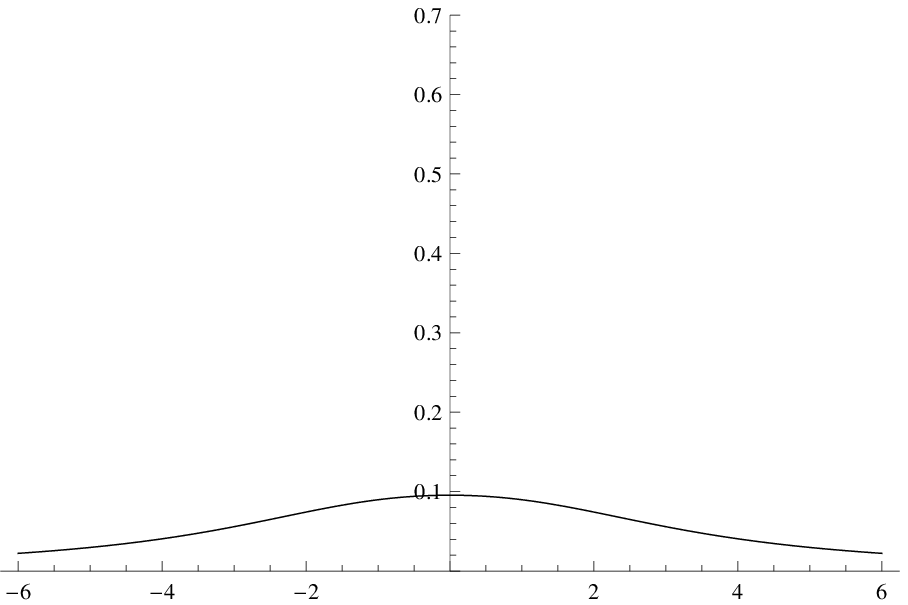}
\caption{$t=3$} \label{CP3}
\end{center}
  \end{minipage}
\end{center}
\end{figure}

 \begin{proposition}\label{c-Cauchy}
There exists a probability measure $\mu$ on $\mathbb{R}$ such that $\mu*C_t$ is not unimodal for any $t>0$, and 
\begin{equation}\label{moments}
\int_\mathbb{R} |x|^p \,d\mu(x)<\infty, \hspace{2mm} 0<p<3. 
\end{equation}
\end{proposition}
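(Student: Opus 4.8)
The plan is to reuse the atomic constructions behind Proposition \ref{notunimodal1} and Proposition \ref{c-notunimodal}: I would take $\mu=\sum_{n=1}^\infty w_n\delta_{a_n}$ with $w_n>0$, $\sum_n w_n=1$, and atoms $a_1<a_2<\cdots$ whose consecutive gaps tend to infinity, and then show that the density of $\mu\ast C_t$ keeps strictly increasing arbitrarily far out to the right for every $t>0$. The density of $\mu\ast C_t$ equals $\frac{t}{\pi}f_t$, where
\begin{equation}
f_t(x):=\sum_{n=1}^\infty\frac{w_n}{(x-a_n)^2+t^2},\qquad f_t'(x)=\sum_{n=1}^\infty\frac{-2w_n(x-a_n)}{\bigl((x-a_n)^2+t^2\bigr)^2}.
\end{equation}
Since $f_t(x)\to 0$ as $x\to+\infty$ (the series is dominated by $\sum_n w_n/t^2$), it suffices to exhibit points $x_k\to+\infty$ with $f_t'(x_k)>0$: a unimodal density would be non-increasing on some half-line $(m,\infty)$, which such points rule out.

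The key estimate is obtained by evaluating $f_t'$ at $x_k:=a_k-1$, just to the left of the $k$-th atom, and splitting the sum into the diagonal term $n=k$ and the rest. The diagonal term is exactly $\frac{2w_k}{(1+t^2)^2}>0$. For the off-diagonal part I set $b_k:=\inf_{n\neq k}|a_k-a_n-1|$ (the quantity in \eqref{c-02}); the $n$-th term has magnitude $g\bigl(|a_k-1-a_n|\bigr)$ with $g(d):=\frac{2d}{(d^2+t^2)^2}$. Because $g$ is decreasing on $(t/\sqrt3,\infty)$, once $b_k>t/\sqrt3$ each off-diagonal term is at most $g(b_k)\le 2/b_k^3$, so their total contribution is bounded below by $-2/b_k^3$ (using $\sum_{n\neq k}w_n\le 1$). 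Hence
\begin{equation}
f_t'(a_k-1)\ \ge\ \frac{2w_k}{(1+t^2)^2}-\frac{2}{b_k^3}\ =\ \frac{2}{(1+t^2)^2}\left(w_k-\frac{(1+t^2)^2}{b_k^3}\right),
\end{equation}
which is positive as soon as $w_kb_k^3>(1+t^2)^2$. Thus, provided the sequences satisfy $b_k\to\infty$ and $w_kb_k^3\to\infty$, for each fixed $t>0$ this holds for all large $k$, giving $f_t'(a_k-1)>0$ with $a_k-1\to+\infty$, and non-unimodality follows.

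For the explicit construction and the moment bound, I would take $a_k=a^k$ with $a\ge2$, so that (exactly as in the proof of Proposition \ref{c-notunimodal}) $b_k\ge c\,a^k$ for some $c>0$ and all $k$, and set $w_k=Mk^2a^{-3k}$ with $M=\bigl(\sum_k k^2a^{-3k}\bigr)^{-1}$. Then $w_kb_k^3\ge Mc^3k^2\to\infty$, which secures the hypothesis above, while
\begin{equation}
\int_{\mathbb{R}}|x|^p\,d\mu(x)=M\sum_{k=1}^\infty k^2a^{(p-3)k},
\end{equation}
converges precisely when $p<3$, giving \eqref{moments}.

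The main obstacle, and the reason the answer differs from the Gaussian case, is the interference estimate: the derivative of the Cauchy kernel decays only like $1/d^3$, whereas for $N(0,t)$ the contribution of distant atoms is exponentially small. This polynomial decay is exactly what forces the threshold $p<3$, since the competition between $w_kb_k^3\to\infty$ (needed for non-unimodality at every $t$) and $\sum_k w_k a_k^{\,p}<\infty$ is governed by the cubic rate. The borderline exponent $3$ is moreover sharp for this construction, as the chosen $\mu$ has infinite $p$-th moment for every $p\ge3$.
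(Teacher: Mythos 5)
Your proposal is correct and follows essentially the same route as the paper's proof: the same atomic measure $\sum_n w_n\delta_{a_n}$, the same evaluation of $f_t'$ at $a_k-1$ with a diagonal/off-diagonal split, the same sufficient condition $w_kb_k^3\to\infty$, and the same explicit choice $a_k=a^k$, $w_k\asymp k^r a^{-3k}$ (the paper allows general $r>0$; you take $r=2$). The only cosmetic difference is that you bound the off-diagonal terms via monotonicity of $d\mapsto 2d/(d^2+t^2)^2$ beyond $t/\sqrt{3}$, whereas the paper uses the direct inequality $2d/(d^2+t^2)^2\le 2/d^3$, which needs no restriction on $b_k$; both yield the identical bound $2/b_k^3$.
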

\begin{proof}
Let $\{w_n\}_{n\geq1}$ be a sequence of positive numbers such that $\sum_{n=1}^\infty w_n=1$ and $\{a_n\}_{n\geq1}$ be a sequence of real numbers. 
Consider the probability measure 
\begin{equation}
\mu= \sum_{n=1}^\infty w_n \delta_{a_n}. 
\end{equation}
Suppose that the sequence 
\begin{equation}
b_k=\inf_{n\in \N \setminus\{k\}}|a_k - a_n-1|, \qquad k \in \N
\end{equation}
 satisfies the condition
\begin{equation}\label{cond}
\lim_{k\to\infty} w_k b_k^3 =\infty. 
\end{equation}
When $\{a_n\}$ is increasing, this condition means that the distance between $a_{n}$ and $a_{n+1}$ grows sufficiently fast.   
Let 
\begin{equation}
f_t(x):= \frac{\pi}{t} \frac{d (\mu\ast C_t)}{d x} = \sum_{n=1}^\infty \frac{w_n}{(x-a_n)^2+t^2}. 
\end{equation}
Then we obtain 
\begin{equation}
f_t'(x) =  \sum_{n=1}^\infty \frac{-2w_n (x-a_n)}{[(x-a_n)^2+t^2]^2},  
\end{equation}
and so for each $k \in \N$ and each $t>0$
\begin{equation}
\begin{split}
f_t'(a_k-1) 
&= \frac{2 w_k}{(1+t^2)^2} + \sum_{n\geq1, n\neq k}\frac{-2w_n (a_k-a_n-1)}{[(a_k-a_n-1)^2+t^2]^2}\\
&\geq \frac{2 w_k}{(1+t^2)^2} - \sum_{n\geq1, n\neq k}\frac{2w_n}{|a_k-a_n-1|^3} \\
&\geq \frac{2 w_k}{(1+t^2)^2} - \sum_{n\geq1, n\neq k}\frac{2 w_n}{b_k^3} \\
&\geq \frac{2 w_k}{(1+t^2)^2} - \frac{2}{b_k^3} = 2w_k\left(\frac{1}{(1+t^2)^2}- \frac{1}{w_k b_k^3} \right).  
\end{split}
\end{equation}
The condition \eqref{cond} shows that $f_t'(a_k-1)$ is positive for sufficiently large $k \in \N$. This shows that $\mu \ast C_t$ is not unimodal for any $t>0$. 

If we take the particular sequences $a_n=a^n$ and $w_n= c n^r  a^{-3n}$, where $a\geq2, r>0$ and $c>0$ is a normalizing constant, then the sequence $b_n$ satisfies $b_n \geq C a^n$ for some constant $C>0$ independent of $n$. Then the conditions \eqref{cond} and \eqref{moments} hold true. 
\end{proof}

In the above construction, for any positive weights $\{w_n\}_n$ and any sequence $\{a_n\}_n$ that satisfies \eqref{cond}, the third moment of $\mu$ is always infinite,  
\begin{equation}
\int_{\mathbb{R}} |x|^3\,d\mu(x)=\infty, 
\end{equation}
 due to the inequality $|a_k| \geq b_k-|a_1|-1$ and the condition \eqref{cond}. 
The next question is then whether there exists a probability measure $\mu$ with a finite third moment such that $\mu \ast C_t$ is not unimodal for any $t>0$, or at least for sufficiently large $t>0$.  The complete answer is given below. 


\begin{theorem}\label{c-Cauchy2}
Let $\mu$ be a probability measure on $\R$ which has a finite absolute third moment 
$$
\beta := \int_\R |x|^3 \,d\mu(x) <\infty. 
$$ 
Then $\mu \ast C_t$ is unimodal for $t\geq20 \beta^{\frac{1}{3}}$. 
\end{theorem}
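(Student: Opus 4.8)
The plan is to mirror the proof of Theorem \ref{c-unimodal}, replacing the Gaussian kernel by the Cauchy kernel and the Gaussian tail bound by a third-moment Markov bound. Write
\[
f_t(x) := \frac{\pi}{t}\,\frac{d(\mu\ast C_t)}{dx}(x) = \int_\R \frac{1}{(x-y)^2+t^2}\,d\mu(y),
\]
so that the density $p_t = d(\mu\ast C_t)/dx$ equals $f_t$ up to the positive factor $t/\pi$. Since $p_t$ is a Poisson integral of $\mu$ it is real analytic on $\R$ and continuous, so by Lemma \ref{lem basic unimodal} it suffices to show $f_t$ is strictly increasing and then strictly decreasing. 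I would establish this through the sign pattern
\[
f_t'(x) > 0 \ (x \le -c), \qquad f_t'(x) < 0 \ (x \ge c), \qquad f_t''(x) < 0 \ (|x| < c)
\]
for a suitable threshold $c = c(t)$: the last condition makes $f_t'$ strictly decreasing on $(-c,c)$, so it has a single zero there, and the first two pin that zero inside $(-c,c)$. Differentiating under the integral gives
\[
f_t'(x) = \int_\R \frac{-2(x-y)}{((x-y)^2+t^2)^2}\,d\mu(y), \qquad f_t''(x) = \int_\R \frac{6(x-y)^2-2t^2}{((x-y)^2+t^2)^3}\,d\mu(y),
\]
and the only input on $\mu$ is the Markov bound $\mu(|y|>r) \le \beta/r^3$ for $r>0$.

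The two kernels have explicit extrema that drive everything. The first-derivative kernel $g(u) = 2u/(u^2+t^2)^2$ satisfies $0 \le g(u) \le 9/(8\sqrt3\,t^3)$ for $u>0$, with maximum at $u = t/\sqrt3$; the second-derivative kernel $h(u) = (6u^2-2t^2)/(u^2+t^2)^3$ is negative exactly on $|u| < t/\sqrt3$, is increasing on $(0,t)$, and satisfies $h(u) \le h(t) = 1/(2t^4)$ for all $u$. For the outer estimate, fix $x \ge c$ and split $f_t' = P - N$ into the part over $y>x$ (positive) and the part over $y<x$ (which is subtracted). The positive part obeys $P(x) \le \frac{9}{8\sqrt3\,t^3}\,\mu((x,\infty)) \le \frac{9\beta}{8\sqrt3\,t^3 x^3}$, while the subtracted part is bounded below by restricting the integral to the bulk $\{|y|\le\lambda\}$, on which $g(x-y)$ is controlled from below and $\mu$ has mass at least $1-\beta/\lambda^3$. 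Choosing $\lambda < c$ guarantees $x-y>0$ throughout the bulk, so $N$ dominates $P$ and $f_t'(x)<0$; the case $x \le -c$ is symmetric.

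For the central estimate I would again split $\mu$ into the bulk $\{|y|\le\lambda\}$ and its complement. The crucial point is to choose $c$ and $\lambda$ with $\lambda < c$ and $c + \lambda < t/\sqrt3$: then for $|x|<c$ and $|y|\le\lambda$ one has $|x-y| < t/\sqrt3$, so $h(x-y) \le h(c+\lambda) < 0$, and the bulk contributes a strictly negative quantity of size $\approx |h(c+\lambda)|$, while the complement contributes at most $\frac{1}{2t^4}\cdot\frac{\beta}{\lambda^3}$. Thus $f_t''(x) \le h(c+\lambda)\bigl(1-\beta/\lambda^3\bigr) + \frac{\beta}{2t^4\lambda^3}$, which is negative once $\beta/\lambda^3$ is small relative to $|h(c+\lambda)|\,t^4$. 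A concrete admissible choice is $c = \frac{3}{10}t$ and $\lambda = \frac14 t$ (so $c+\lambda = \frac{11}{20}t < t/\sqrt3$), under which the hypothesis $t \ge 20\beta^{1/3}$, i.e. $\beta/\lambda^3 \le \frac{1}{125}$, makes all three inequalities hold with room to spare.

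The main obstacle is the constant bookkeeping forced by the \emph{polynomial} tail. Unlike the Gaussian case, where the tail mass decays faster than any power and leaves enormous slack, here the Markov bound $\mu(|y|>\lambda)\le\beta/\lambda^3$ decays only like $\lambda^{-3}$, which is exactly the order of the Cauchy smoothing kernel itself; consequently the bulk ``signal'' and the tail ``noise'' are genuinely comparable, and one must simultaneously keep $\lambda$ large (to suppress $\beta/\lambda^3$), keep $\lambda < c$ (for the clean outer lower bound), and keep $c+\lambda < t/\sqrt3$ (to stay in the region where $h<0$). Threading these three constraints is what fixes the numerical constant, and the one genuinely delicate point is to verify that the chosen $c,\lambda$ satisfy every inequality uniformly over all $x$ in the relevant ranges, not merely at the endpoints.
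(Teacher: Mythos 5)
Your proposal is correct and follows essentially the same route as the paper's proof: the same splitting of $f_t'$ and $f_t''$ into positive and negative parts, the same kernel extrema (maximum of $2u/(u^2+t^2)^2$ at $u=t/\sqrt{3}$, maximum of the second-derivative kernel at $u=\pm t$ with value $1/(2t^4)$), the same Markov bound $\mu(|y|>r)\le \beta/r^3$, and the same sign pattern ($f_t'<0$ to the right, $f_t'>0$ to the left, $f_t''<0$ in the middle). The only differences are bookkeeping choices---the paper uses the threshold $c=t/4$ with an $x$-dependent bulk $(-x,x/2)$, while you use $c=\frac{3}{10}t$ with the fixed bulk $[-t/4,t/4]$---and both sets of constants close under $t\ge 20\beta^{1/3}$.
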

\begin{proof}
The  finite third moment condition implies the tail estimate 
\begin{equation}\label{tail}
\mu([-x,x]^c)= \int_{|y| >x} \,d\mu(y) \leq  \int_{|y|>x} \left(\frac{|y|}{x}\right)^3 \,d\mu(y) \leq \frac{\beta}{x^3}, \qquad x>0. 
\end{equation}
It suffices to prove that the function 
\begin{equation}
f_t(x):= \frac{\pi}{t} \frac{d (\mu\ast C_t)}{d x} = \int_\R \frac{1}{(x-y)^2+t^2}\,d\mu(y) 
\end{equation}
has a unique local maximum for large $t>0$.  Suppose that $x>0$ for some time. The derivative $f_t'$ splits into the positive and negative parts 
\begin{equation}
f_t'(x) =  \int_{y > x} \frac{2(y-x)}{[(x-y)^2+t^2]^2}\,d\mu(y)  - \int_{y \leq x} \frac{2(x-y)}{[(x-y)^2+t^2]^2}\,d\mu(y). 
\end{equation}
By calculus the function $u \mapsto \frac{2u}{(u^2+t^2)^2}$ takes a global maximum at the unique point $u=t/\sqrt{3}$. Using the tail estimate \eqref{tail} yields the estimate of the positive part 
\begin{equation}\label{positive}
\begin{split}
\int_{y > x} \frac{2(y-x)}{[(x-y)^2+t^2]^2}\,d\mu(y) 
&\leq \int_{y > x} \frac{2\frac{t}{\sqrt{3}}}{(\frac{t^2}{3}+t^2)^2}\,d\mu(y) \leq  \frac{\beta}{t^3 x^3}. 
\end{split}
\end{equation}
On the other hand the negative part can be estimated as 
\begin{equation}
\begin{split}
 \int_{y \leq x} \frac{2(x-y)}{[(x-y)^2+t^2]^2}\,d\mu(y)
 &\geq  \int_{-x < y < x/2 } \frac{2(x-y)}{[(x-y)^2+t^2]^2}\,d\mu(y). 
\end{split}
\end{equation}
Elementary calculus shows that for $-x<y<x/2$, 
\begin{equation}
\frac{2(x-y)}{[(x-y)^2+t^2]^2} \geq \min\left\{ \frac{4x}{(4x^2+t^2)^2}, \frac{x}{(x^2/4+t^2)^2} \right\}, 
\end{equation}
and if we further restrict to the case $x\geq t/4$, then 
\begin{equation}
\min\left\{ \frac{4x}{(4x^2+t^2)^2}, \frac{x}{(x^2/4+t^2)^2} \right\} \geq \min\left\{ \frac{4x}{(4x^2+16x^2)^2}, \frac{x}{(x^2/4+16x^2)^2} \right\} \geq \frac{10^{-3}}{x^3}. 
\end{equation}
Thus we obtain 
\begin{equation}\label{negative}
\begin{split}
 \int_{y \leq x} \frac{2(x-y)}{[(x-y)^2+t^2]^2}\,d\mu(y)
 &\geq \frac{10^{-3}}{x^3} \mu((-x/2,x/2)) \geq \frac{10^{-3}}{x^3} \left(1- \frac{\beta}{(x/2)^3}\right) \\
 &\geq  \frac{10^{-3}}{x^3}\left(1- \frac{8^3\beta}{t^3}\right), \qquad x \geq t/4. 
\end{split}
\end{equation}
Comparing \eqref{positive} and \eqref{negative}, taking $t \geq 20 \beta^{1/3}$ guarantees that the positive part is smaller than the negative part, and hence 
\begin{equation}
f_t'(x) <0, \qquad x \geq \frac{t}{4}. 
\end{equation}
Similarly, if $t \geq 20 \beta^{1/3}$ then 
\begin{equation}
f_t'(x) >0, \qquad x\leq -\frac{t}{4}. 
\end{equation}
In order to show that $f_t'$ has a unique zero, it suffices to show that $f_t''(x)<0$ for $|x| \leq t/4$. Now we have
\begin{equation}
f_t''(x)= \int_{|y -x| > t/\sqrt{3}} \frac{2[3(y-x)^2-t^2]}{[(x-y)^2+t^2]^3}\,d\mu(y)  - \int_{|y -x| \leq t/\sqrt{3}} \frac{2[t^2-3(y-x)^2]}{[(x-y)^2+t^2]^3}\,d\mu(y). 
\end{equation}
The function $u\mapsto 2(3u^2-t^2)/(u^2+t^2)^3$ attains a global maximum at $u=\pm t$ and a global minimum at $u=0$. Therefore, 
the positive part can be estimated as follows: 
\begin{equation}
\begin{split}
\int_{|y -x| > t/\sqrt{3}} \frac{2[3(y-x)^2-t^2]}{[(x-y)^2+t^2]^3}\,d\mu(y) 
&\leq \int_{|y -x| > t/\sqrt{3}} \frac{2(3t^2-t^2)}{(t^2+t^2)^3}\,d\mu(y) \\
& = \frac{1}{2t^4}\mu\left(\left[x-\frac{t}{\sqrt{3}}, x+\frac{t}{\sqrt{3}}\right]^c\right). 
\end{split}
\end{equation}
For all $x$ such that $|x| \leq t/4$, we have the inclusion 
\begin{equation}
\left[x-\frac{t}{\sqrt{3}}, x+\frac{t}{\sqrt{3}}\right]^c \subseteq \left[-\frac{t}{5}, \frac{t}{5}\right]^c, 
\end{equation}
and hence we obtain 
\begin{equation}\label{positive2}
\begin{split}
\int_{|y -x| > t/\sqrt{3}} \frac{2[3(y-x)^2-t^2]}{[(x-y)^2+t^2]^3}\,d\mu(y)  
&\leq  \frac{1}{2t^4} \mu\left( \left[-\frac{t}{5}, \frac{t}{5}\right]^c\right) \leq  \frac{5^3 \beta}{2t^7}. 
\end{split}
\end{equation}
On the other hand, the negative part has the estimate
\begin{equation}
\int_{|y -x| \leq t/\sqrt{3}} \frac{2[t^2-3(y-x)^2]}{[(x-y)^2+t^2]^3}\,d\mu(y) \geq \int_{|y -x| \leq t/2} \frac{2[t^2-3(y-x)^2]}{[(x-y)^2+t^2]^3}\,d\mu(y). 
\end{equation}
By calculus, the function $u\mapsto (t^2-3u^2)/(u^2+t^2)^3$ is decreasing on $[0,t]$, and so 
\begin{equation}\label{negative2}
\begin{split}
&\int_{|y -x| \leq t/2} \frac{2[t^2-3(y-x)^2]}{[(x-y)^2+t^2]^3}\,d\mu(y) \\
&\qquad \geq \int_{|y -x| \leq t/2} \frac{2(t^2-3\frac{t^2}{4})}{(\frac{t^2}{4}+t^2)^3}\,d\mu(y) = \frac{32}{125 t^4} \mu\left(\left[x-\frac{t}{2}, x+\frac{t}{2}\right]\right) \\
&\qquad \geq \frac{32}{125 t^4} \mu\left(\left[-\frac{t}{4}, \frac{t}{4}\right]\right) 
\geq  \frac{1}{4 t^4}\left(1- \frac{4^3\beta}{t^3}\right) 
\end{split}
\end{equation}
for all $|x| \leq t/4$. The positive part \eqref{positive2} is smaller than the negative part \eqref{negative2} if we take $t$ in such a way that $t \geq 10 \beta^{1/3}$. Thus $f_t''(x)<0$ for all $|x|\leq t/4$ and $t \geq 10 \beta^{1/3}$. 
\end{proof}


\subsection{Positive stable process with index $1/2$ with initial distribution}
A positive stable process with index $1/2$ has the distribution 
\begin{equation}
L_t(dx):=\frac{t}{\sqrt{2\pi}}\cdot \frac{e^{-\frac{t^2}{2x}}}{x^{3/2}} \cdot 1_{[0,\infty)}(x)\,dx, \hspace{2mm} x\in\mathbb{R}, 
\end{equation} 
at time $t\geq0$ which is called the {\it L\'evy distribution}.  We restrict to the case where the initial distribution is compactly supported. 

\begin{theorem}\label{c-Levy}
If $\mu$ is a compactly supported on $\mathbb{R}$ with diameter $D_\mu$ then $\mu\ast L_t$ is unimodal for all $t\ge (90/4)^{1/4} D_\mu^{1/2}$.
\end{theorem}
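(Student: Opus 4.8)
The plan is to mimic the compact-support argument used for free Brownian motion in Theorem~\ref{unimodal1}, analyzing directly the sign of the derivative of the density. After a translation I may assume $\supp(\mu)\subseteq[-\frac{D_\mu}{2},\frac{D_\mu}{2}]$. Writing $g_t(u):=u^{-3/2}e^{-t^2/(2u)}$ for $u>0$ and $g_t(u):=0$ for $u\le0$, the factor $e^{-t^2/(2u)}$ kills every power of $u$ as $u\to0^+$, so $g_t\in C^\infty(\R)$ and the density of $\mu\ast L_t$ is the smooth function
\[
f_t(x):=\frac{\sqrt{2\pi}}{t}\,\frac{d(\mu\ast L_t)}{dx}(x)=\int_\R g_t(x-y)\,d\mu(y),
\]
to which I may differentiate under the integral sign since $\mu$ is compactly supported. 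It then suffices to show that $f_t'$ changes sign exactly once, from $+$ to $-$.

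First I would record the two elementary computations
\[
g_t'(u)=\frac{t^2-3u}{2\,u^{7/2}}\,e^{-t^2/(2u)},\qquad
g_t''(u)=\frac{15u^2-10t^2u+t^4}{4\,u^{11/2}}\,e^{-t^2/(2u)},\qquad u>0.
\]
Thus $g_t'>0$ on $(0,t^2/3)$ and $g_t'<0$ on $(t^2/3,\infty)$ (the translated L\'evy kernel peaks at $u=t^2/3$), while $g_t''<0$ exactly on $(u_-,u_+)$, where $u_\pm=\frac{(5\pm\sqrt{10})\,t^2}{15}$ are the two roots of $15u^2-10t^2u+t^4$. Note that the peak $t^2/3$ is the midpoint of $[u_-,u_+]$, with $\frac{t^2}{3}-u_-=u_+-\frac{t^2}{3}=\frac{\sqrt{10}}{15}t^2$.

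Next I would split $\R$ into three regions. For $-\frac{D_\mu}{2}<x<\frac{t^2}{3}-\frac{D_\mu}{2}$, every $y\in\supp(\mu)$ with $y<x$ gives $0<x-y\le x+\frac{D_\mu}{2}<\frac{t^2}{3}$, so $g_t'(x-y)>0$, while $y\ge x$ contributes $0$; since there is positive mass to the left of $x$, this yields $f_t'(x)>0$. For $x>\frac{t^2}{3}+\frac{D_\mu}{2}$, every $y$ gives $x-y\ge x-\frac{D_\mu}{2}>\frac{t^2}{3}$, so $g_t'(x-y)<0$ and $f_t'(x)<0$. On the central window $\frac{t^2}{3}-\frac{D_\mu}{2}\le x\le\frac{t^2}{3}+\frac{D_\mu}{2}$ one has $x-y\in[\frac{t^2}{3}-D_\mu,\frac{t^2}{3}+D_\mu]$, and once this interval sits inside $(u_-,u_+)$ one gets $f_t''<0$ there, i.e. $f_t'$ is strictly decreasing across the window. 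Being $>0$ at the left end, $<0$ at the right end, and strictly decreasing in between, $f_t'$ has a single zero in the window; combined with the two outer regions, $f_t'$ passes once from $+$ to $-$, so $\mu\ast L_t$ is unimodal.

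The only quantitative point — and the crux of the argument — is the inclusion $[\frac{t^2}{3}-D_\mu,\frac{t^2}{3}+D_\mu]\subseteq(u_-,u_+)$ needed for the concavity step. By the symmetry of $u_\pm$ about $t^2/3$ noted above, both endpoint conditions $\frac{t^2}{3}-D_\mu\ge u_-$ and $\frac{t^2}{3}+D_\mu\le u_+$ reduce to the single inequality $D_\mu\le\frac{\sqrt{10}}{15}t^2$, equivalently $t^4\ge\frac{225}{10}D_\mu^2=\frac{90}{4}D_\mu^2$, i.e. $t\ge(90/4)^{1/4}D_\mu^{1/2}$, which is exactly the stated threshold. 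The main obstacle is therefore the bookkeeping in the second-derivative computation and locating $u_\pm$ precisely enough to extract this sharp constant; the monotonicity in the two outer regions and the $C^\infty$ extension of $g_t$ across the boundary $u=0$ are routine.
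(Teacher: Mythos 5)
Your proposal is correct and follows essentially the same route as the paper's proof: compute $g_t'$ and $g_t''$ of the L\'evy kernel, show $f_t'>0$ to the left of the window and $f_t'<0$ to the right, and force $f_t''<0$ on the central window by requiring the range of $x-y$ to lie inside the concavity interval $\bigl(u_-,u_+\bigr)$ of the kernel, which yields exactly the threshold $t^2\ge\frac{3\sqrt{10}}{2}D_\mu$, i.e.\ $t\ge(90/4)^{1/4}D_\mu^{1/2}$. The only differences are cosmetic (you center the support at $[-D_\mu/2,D_\mu/2]$ instead of $[0,D_\mu]$, shifting the three regions accordingly), and your explicit remark that positive mass lies to the left of $x$ in the first region is in fact slightly more careful than the paper's wording.
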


\begin{proof}
Considering the translation we may assume that $\mu$ is supported on $[0,\gamma]$, where $\gamma=D_\mu$. 
We set the following function:
\begin{equation}
g_{t,y}(x):=\frac{e^{-\frac{t^2}{2(x-y)}}}{(x-y)^{3/2}} 1_{[0,\infty)}(x-y), \qquad x, y\in \R.
\end{equation}
Note that this function is $C^\infty$ with respect to $x$. 
Consider the following function
\begin{equation}
f_t(x):=\frac{\sqrt{2\pi}}{t}\cdot\frac{d(\mu\ast L_t)}{dx}(x)=\int_{0}^{\gamma} g_{t,y}(x)\,d\mu(y), \qquad x \in \mathbb R,  
 \end{equation}
 which is supported on $(0,\infty)$ and has the derivative 
 \begin{equation}
 \frac{d}{d x} f_t(x)=\int_{0}^{\gamma} \frac{t^2-3(x-y)}{2(x-y)^{7/2}}e^{-\frac{t^2}{2(x-y)}}1_{[0,\infty)}(x-y)\,d\mu(y). 
\end{equation}
If $(0<)~ x < t^2/3$ then $t^2-3(x-y)< t^2-3 \cdot t^2/3+3=0$, and therefore $f_t'(x)>0$. Moreover, if $t^2/3+\gamma< x$ then we have that $t^2-3(x-y)< t^2-3(t^2/3+\gamma)+3\gamma=0$, and therefore $f_t'(x)<0$. For $t^2/3 < x < t^2/3 + \gamma$, the second derivative of $f_t$ is given by
\begin{equation}
f_t''(x)=\int_0^\gamma \frac{15(x-y)^2-10t^2(x-y)+t^4}{4(x-y)^{11/2}}e^{-\frac{t^2}{2(x-y)}}1_{[0,\infty)}(x-y)\,d\mu(y).
\end{equation}
Note $t^2/3 - \gamma < x - y < t^2/3 + \gamma$. 
Since for $X \in \mathbb R$
\begin{equation}
15X^2-10t^2X+t^4<0 \hspace{2mm} \text{ iff } \hspace{2mm} \frac{t^2}{3}-\frac{2 t^2}{3\sqrt{10}}<X<\frac{t^2}{3}+\frac{2 t^2}{3\sqrt{10}}, 
\end{equation}
if $\frac{2 t^2}{3\sqrt{10}}\geq \gamma$ then $f_t''(x)<0$. 

To summarize, we have obtained that if $t^2 \ge \frac{3\sqrt{10}}{2}\gamma$ then
\begin{itemize}
\item $f_t'(x)>0, \qquad x < t^2/3,$
\item $f_t'(x)<0, \qquad x>t^2/3+\gamma,$
\item $f_t''(x)<0, \qquad t^2/3<x<t^2/3+\gamma.$
\end{itemize}
Hence $f_t$ has a unique local maximum in $[t^2/3,t^2/3+\gamma]$. Therefore $\mu\ast L_t$ is unimodal for all $t^2\ge \frac{3\sqrt{10}}{2}\gamma$.
\end{proof}

We give a counterexample for large time unimodality for positive stable processes of index $1/2$ when initial distributions are not compactly supported.

\begin{proposition}\label{Levy}
There exists a probability measure $\mu$ on $\mathbb{R}$ such that $\mu\ast L_t$ is not unimodal for any $t>0$, and 
\begin{equation}
\int_\mathbb{R} |x|^p \,d\mu(x)<\infty, \qquad 0<p<\frac{5}{2}.
\end{equation}
\end{proposition}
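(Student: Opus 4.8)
The plan is to imitate the atomic counterexamples of Propositions \ref{notunimodal1}, \ref{c-notunimodal} and \ref{c-Cauchy}: I take $\mu=\sum_{n=1}^\infty w_n\delta_{a_n}$ with $w_n>0$, $\sum_n w_n=1$, and with a strictly increasing sequence $\{a_n\}$ whose gaps grow to infinity, and I produce a sequence of points tending to $+\infty$ at which the rescaled density of $\mu\ast L_t$ has strictly positive derivative. Since $\mu\ast L_t$ is a probability density that decays to $0$ at infinity, the existence of arbitrarily large points with positive derivative contradicts the eventual non-increase demanded by unimodality, exactly as in Proposition \ref{c-notunimodal}. Throughout I use the function $g_{t,y}$ together with the derivative formula
\[
\frac{d}{dx}g_{t,y}(x)=\frac{t^2-3(x-y)}{2(x-y)^{7/2}}e^{-\frac{t^2}{2(x-y)}}1_{[0,\infty)}(x-y)
\]
established in the proof of Theorem \ref{c-Levy}, and I set $f_t:=\frac{\sqrt{2\pi}}{t}\frac{d(\mu\ast L_t)}{dx}=\sum_n w_n g_{t,a_n}$, which is smooth and may be differentiated termwise because the series and its derivatives converge uniformly on compacts.

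The new feature, and the main obstacle, is that $L_t$ is supported on $[0,\infty)$, so the bump $g_{t,a_n}$ influences only points to the right of $a_n$; one cannot evaluate just to the left of $a_k$ as in the two-sided cases. Instead I would evaluate at $x_k:=a_k+t^2/6$, which lies strictly inside the increasing phase $(a_k,a_k+t^2/3)$ of the $k$-th bump. A direct substitution gives $g_{t,a_k}'(x_k)=\frac{6^{7/2}e^{-3}}{4}\,t^{-5}>0$, so the $k$-th term contributes $\frac{6^{7/2}e^{-3}}{4}w_k t^{-5}$. Writing $b_k:=\inf_{n\neq k}|a_k-a_n|$, the bumps with $n<k$ are in their decreasing phase at $x_k$ once $b_k>t^2/3$, and using $e^{-t^2/(2s)}\le1$ together with $|t^2-3s|\le 3s$ for $s=x_k-a_n\ge b_k$, each contributes at least $-\frac{3}{2}w_n b_k^{-5/2}$; summing over $n<k$ yields a negative part bounded below by $-\frac{3}{2}b_k^{-5/2}$. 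The bumps with $n>k$ vanish at $x_k$ for all large $k$, because the growing gaps force $a_{k+1}>x_k$. Collecting terms,
\[
f_t'(x_k)\ \ge\ \frac{3}{2 b_k^{5/2}}\left(\frac{6^{7/2}e^{-3}}{6\,t^5}\,w_k b_k^{5/2}-1\right),
\]
which is positive for all large $k$ as soon as $\lim_{k\to\infty}w_k b_k^{5/2}=\infty$; this is the analogue of the condition $w_k b_k^3\to\infty$ in Proposition \ref{c-Cauchy}, the exponent $5/2$ reflecting the $s^{-5/2}$ decay of $g_{t,y}'$ for large $s$.

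Finally I would exhibit explicit sequences meeting both the structural condition and the moment bound. Taking $a_n=a^n$ with $a\ge2$ gives $b_k\ge C a^k$ for some $C>0$, and choosing $w_n=c\,n^r a^{-5n/2}$ with $r>0$ and $c>0$ a normalizing constant yields $w_k b_k^{5/2}\asymp c\,k^r\to\infty$, so $\mu\ast L_t$ is not unimodal for any $t>0$. For the moment condition,
\[
\int_\R |x|^p\,d\mu(x)=c\sum_{n=1}^\infty n^r a^{(p-\frac52)n}<\infty\qquad\text{for every }0<p<\tfrac52,
\]
since the geometric factor $a^{(p-5/2)n}$ is summable precisely when $p<\frac52$. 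The only genuinely delicate points are the placement of the evaluation point forced by the one-sidedness of $L_t$ and the bookkeeping of which neighbouring bumps are active there; the remaining estimates are routine calculus on $g_{t,y}$ and its derivative.
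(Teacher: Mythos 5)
Your proof is correct, and it shares the paper's skeleton --- the same atomic measure $\mu=\sum_n w_n\delta_{a_n}$, the same hypotheses $b_k\to\infty$ and $w_kb_k^{5/2}\to\infty$, the same explicit sequences $a_n=a^n$ and $w_n\asymp n^r a^{-5n/2}$, and the same conclusion drawn from positivity of $f_t'$ at a sequence of points tending to $+\infty$ --- but it departs from the paper at precisely the step you flagged as delicate, and your version is the more robust one. The paper evaluates at the fixed point $a_k+1$ (and accordingly defines $b_k=\inf_{n\neq k}|a_k-a_n+1|$), so the $k$-th bump contributes $w_k\frac{t^2-3}{2}e^{-t^2/2}$, and it bounds the remaining negative terms via the global minimum of $s\mapsto\frac{t^2-3s}{2s^{7/2}}e^{-t^2/(2s)}$ at $s=\frac{5+\sqrt{10}}{15}t^2$. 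That leading term is positive only when $t>\sqrt{3}$; for $t\le\sqrt{3}$ and $k$ large every term of $f_t'(a_k+1)$ is nonpositive, so the paper's test point genuinely fails for small times and its displayed lower bound cannot produce non-unimodality there. Your $t$-dependent test point $x_k=a_k+t^2/6$ sits inside the increasing phase of the $k$-th bump, giving the contribution $\frac{6^{7/2}e^{-3}}{4}w_k t^{-5}>0$ for \emph{every} $t>0$, after which your cruder bounds ($e^{-t^2/(2s)}\le 1$, $|t^2-3s|\le 3s$, $s\ge b_k$) on the bumps with $n<k$, together with the observation that the bumps with $n>k$ are inactive at $x_k$ for large $k$, finish the argument uniformly in $t$. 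In short, the paper transplants its Cauchy argument (Proposition \ref{c-Cauchy}), where a fixed offset works for all $t$ because the two-sided kernel's derivative at offset $1$ equals $\frac{2}{(1+t^2)^2}>0$; for the one-sided L\'evy kernel the sign of the derivative at a fixed offset depends on $t$, and your moving test point is what repairs this. Your proposal therefore does not merely paraphrase the published proof --- it closes a small gap in it.
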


\begin{proof}
Let $\{w_n\}_{n\ge1}$ be a sequence of positive numbers such that $\sum_{n=1}^\infty w_n=1$ and $\{a_n\}_{n\ge1}$ be a sequence of real numbers. Consider the probability measure
\begin{equation}
\mu=\sum_{n=1}^\infty w_n\delta_{a_n}.
\end{equation}
Suppose that the sequence
\begin{equation}
b_k=\inf_{n \in \mathbb N \setminus \{k\}} |a_k-a_n+1|, \qquad k\in\mathbb{N}
\end{equation}
satisfies the conditions
\begin{equation}\label{L-condi}
\lim_{k\to \infty}b_k=\infty, \qquad \lim_{k\to\infty}w_kb_k^{5/2}=\infty.
\end{equation}
Let
\begin{equation}
f_t(x):=\frac{\sqrt{2\pi}}{t}\frac{d(\mu\ast L_t)}{dx}=\sum_{n\ge1, a_n<x}w_n \frac{e^{-\frac{t^2}{2(x-a_n)}}}{(x-a_n)^{3/2}}.
\end{equation}
Then we obtain
\begin{equation}
f_t'(x)=\sum_{n\ge1, a_n<x}w_n \frac{[t^2-3(x-a_n)]}{2(x-a_n)^{7/2}} e^{-\frac{t^2}{2(x-a_n)},}
\end{equation}
and for each $k\in\mathbb{N}$ and each $t>0$
\begin{equation}
f_t'(a_k+1)=w_k\cdot\frac{t^2-3}{2}e^{-t^2/2} + \sum_{n: n\neq k, a_n < a_k+1} w_n \frac{[t^2-3(a_k-a_n+1)]}{2(a_k-a_n+1)^{7/2}} e^{-\frac{t^2}{2(a_k-a_n+1)}}.
\end{equation}
The condition \eqref{L-condi} implies that $b_k>\frac{5+\sqrt{10}}{15}t^2$ for all $k\ge k(t)$ where $k(t)$ is a some positive integer.  Since the map $x\mapsto \frac{t^2-3x}{x^{7/2}} e^{-\frac{t^2}{2x}}$ has a global minimum at $x=\frac{5+\sqrt{10}}{15}t^2$, for $k\ge k(t)$ we have
\begin{equation}
f_t'(a_k+1)\ge w_k\cdot\frac{t^2-3}{2}e^{-t^2/2}+ \frac{t^2-3b_k}{2b_k^{7/2}} e^{-\frac{t^2}{2b_k}}.
\end{equation}
The condition \eqref{L-condi} shows that $f_t'(a_k+1)$ is positive for sufficiently large $k\in\mathbb{N}$. This shows that $\mu \ast L_t$ is not unimodal for any $t>0$.

If we take the particular sequences $a_k=a^k$ and $w_k=cka^{-\frac{5}{2}k}$ where $a>2$ and $c>0$ is a normalizing constant, then the sequence $b_k$ satisfies $b_k \geq C a^k$ for some constant $C>0$. Then the condition \eqref{L-condi} holds true and 
\begin{equation}
\int_\mathbb{R} |x|^p\, d\mu(x)= \sum_{k\ge1} w_k |a_k|^p=c \sum_{k\ge1} ka^{(p-5/2)k}.
\end{equation}
Hence the above integration is finite if and only if $0<p<5/2.$
\end{proof}

In the above construction, for any positive weights $\{w_n\}_n$ and any sequence $\{a_n\}_n$ that satisfies \eqref{L-condi}, the $5/2$-th moment of $\mu$ is always infinite, that is,
\begin{equation}\label{eq:5/2}
\int_\mathbb{R} |x|^{5/2} \,d\mu(x)=\infty.
\end{equation}
We conjecture that if the integral in \eqref{eq:5/2} is finite then $\mu \ast L_t$ is unimodal in large time.  More generally, considering results on Cauchy processes in Section \ref{sec:Cauchy}, it is natural to expect the following.  
\begin{conjecture}
Suppose that $S_t$ is the law of a classical or free $\alpha$-stable process at time $t$, where $\alpha \in (0,2)$. If $\mu$ is a probability measure such that
\begin{equation}
\int_\mathbb{R} |x|^{2+\alpha}\, d\mu(x)<\infty, 
\end{equation}
then $\mu \ast S_t$ is unimodal for sufficiently large $t>0$. 
\end{conjecture}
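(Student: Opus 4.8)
The plan is to treat the classical and free cases separately, since the classical statement should be a technical but essentially direct extension of the arguments for Theorems \ref{c-unimodal}, \ref{c-Cauchy2} and \ref{c-Levy}, whereas the free statement requires first setting up an appropriate density description. For the classical case, write $s_t$ for the density of $S_t$ and exploit self-similarity, $s_t(x)=\sigma^{-1}s_1(x/\sigma)$ with $\sigma:=t^{1/\alpha}$, so that the density of $\mu\ast S_t$ is $p_t(x)=\int_\R s_t(x-y)\,d\mu(y)$ and $p_t^{(k)}(x)=\sigma^{-(k+1)}\int_\R s_1^{(k)}((x-y)/\sigma)\,d\mu(y)$. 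As in the proofs cited above, the goal is to show that for large $t$ one has $p_t'(x)<0$ to the right of a window of width $O(\sigma)$ around the mode, $p_t'(x)>0$ to the left, and $p_t''(x)<0$ inside the window, after which Lemma \ref{lem basic unimodal} finishes. The single new analytic input is a package of pointwise facts about the standard stable density $s_1$: that $s_1$ is unimodal (a classical result), that $s_1,s_1',s_1''$ are bounded, and, crucially, that the power-law tail $s_1(w)\sim C_\alpha|w|^{-(1+\alpha)}$ differentiates to two-sided bounds $|s_1'(w)|\asymp|w|^{-(2+\alpha)}$ and $|s_1''(w)|\asymp|w|^{-(3+\alpha)}$ for $|w|$ beyond an explicit threshold.

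The moment hypothesis enters only through the Markov-type tail bound $\mu([-x,x]^c)\le\beta\, x^{-(2+\alpha)}$, where $\beta=\int_\R|y|^{2+\alpha}\,d\mu(y)$. In the first-derivative step, for $x\ge c\sigma$ the positive part of $p_t'(x)$ (coming from $y>x$) is bounded by $(\max|s_t'|)\cdot\mu((x,\infty))\lesssim\sigma^{-2}\beta\,x^{-(2+\alpha)}$, while the tail lower bound on $|s_1'|$ turns the bulk contribution into a negative part $\gtrsim\sigma^{\alpha}x^{-(2+\alpha)}\bigl(1-\beta(c\sigma)^{-(2+\alpha)}\bigr)$. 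The $x$-dependence cancels in the comparison, and the ratio of the two is $O\bigl(\beta\,\sigma^{-(2+\alpha)}\bigr)=O\bigl(\beta\,t^{-(2+\alpha)/\alpha}\bigr)\to0$; this is exactly why $2+\alpha$ is the correct exponent. The second-derivative step is analogous: splitting the integral at the inflection scale of $s_1$ makes the positive (tail) contribution $O(\sigma^{-(5+\alpha)})$ and the negative (central, concave) contribution of order $\sigma^{-3}$, and $5+\alpha>3$ forces $p_t''(x)<0$ on $|x|\le c\sigma$ for large $t$.

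Two obstacles arise. First, for asymmetric stable laws, and especially in the one-sided case $\alpha\in(0,1)$ where $S_t$ is supported on a half-line (as in the Lévy case $\alpha=1/2$ of Theorem \ref{c-Levy}), $s_1$ is neither symmetric nor globally analytic across its support boundary, so the splitting into ``window'' and ``tails'' must be organized relative to the mode of $s_1$ rather than about the origin, and the behaviour near the endpoint of the support needs separate treatment. This is technical but should not change the exponent count. The genuinely hard part is the free case: there is no convolution formula $p_t(x)=\int_\R s_t(x-y)\,d\mu(y)$, so one must first develop the analogue of Biane's subordination description of Section \ref{sec Biane} for free $\alpha$-stable convolution semigroups, whose $R$-transform is no longer the simple $tz$ of the semicircle, establish regularity of the subordination function up to the real boundary, and only then reduce unimodality to a level-set count of the form appearing in Lemma \ref{lem unimodal} that can absorb the same $(2+\alpha)$-moment tail estimate. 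I expect this subordination analysis, rather than the stable density estimates, to be the principal difficulty.
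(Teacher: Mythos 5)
You are attempting an open problem: the statement you were given is a \emph{conjecture} that the paper explicitly leaves unproved, so there is no proof in the paper to compare against, and your proposal has to stand entirely on its own. Its classical, symmetric, strictly stable part is a reasonable and essentially correctly calibrated extension of the paper's own arguments (Theorems \ref{c-unimodal} and \ref{c-Cauchy2}): the Markov bound $\mu([-x,x]^c)\le \beta x^{-(2+\alpha)}$, the self-similar scaling $\sigma=t^{1/\alpha}$, the cancellation of the $x$-dependence between the positive part $\lesssim \sigma^{-2}\beta x^{-(2+\alpha)}$ and the negative part $\gtrsim \sigma^{\alpha}x^{-(2+\alpha)}$, giving a ratio $O(\beta t^{-(2+\alpha)/\alpha})\to 0$, and the concavity window $|x|\le c\sigma$ where the tail contribution $O(\sigma^{-(5+\alpha)})$ loses to the bulk contribution of order $\sigma^{-3}$ --- this is exactly the mechanism of the paper's Cauchy proof, with the correct exponent bookkeeping. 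Completing even this part would still require proving (or properly citing) the differentiated two-sided tail asymptotics $|s_1'(w)|\asymp |w|^{-(2+\alpha)}$ and $|s_1''(w)|\asymp|w|^{-(3+\alpha)}$, which you assert rather than establish, though such estimates are available from classical expansions of stable densities.

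The genuine gaps are the two halves you defer. First, the free case is half of the conjecture and you give no argument for it at all: naming the obstruction (no convolution formula; need a Biane-type subordination description as in Section \ref{sec Biane} for free stable semigroups, boundary regularity of the subordination function, and a replacement for Lemma \ref{lem unimodal}) is a statement of the difficulty, not a step toward resolving it, and nothing in your sketch indicates how the $(2+\alpha)$-moment hypothesis would be fed through the subordination function to control level sets. Second, your dismissal of the one-sided classical case ($\alpha\in(0,1)$ totally skewed, e.g.\ the L\'evy laws $L_t$) as ``technical'' is unsupported, and this is precisely the case the paper could not settle: Theorem \ref{c-Levy} requires compactly supported $\mu$, Proposition \ref{Levy} shows the exponent $5/2=2+\alpha$ is sharp, and the conjecture was formulated because the moment-condition version is open. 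There the density is not a two-sided power law --- it vanishes super-exponentially at the support edge --- so your ``package'' of estimates does not apply on the increasing side of the mode; with $\mu$ of unbounded support there is genuine competition in $p_t'(x)=\int s_t'(x-y)\,d\mu(y)$ between mass lying within $O(\sigma)$ of $x$ (contributing positively) and mass far to the left of $x$ (contributing negatively, with only polynomial decay $\sigma^{\alpha}|x-y|^{-(2+\alpha)}$), and controlling this is exactly the unresolved difficulty, not a routine re-centering. In short: your proposal is a credible outline for roughly one quarter of the conjecture and a list of difficulties for the rest; it is not a proof.
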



\end{document}